\documentclass[11pt]{amsart}
\usepackage{amsmath}
\usepackage{amssymb}
\usepackage{mathpazo}
\usepackage{eucal}
\usepackage{amsthm}
\usepackage{amscd}
\usepackage{hyperref}
\usepackage{soul}
\usepackage{mathrsfs}
\usepackage{url}
\usepackage{skull}
\usepackage{adjustbox}

\usepackage{tikz}
\usetikzlibrary{shapes.geometric, arrows}
\usetikzlibrary{shapes,arrows}
\usepackage[latin1]{inputenc}
\usepackage{tikz}
\usetikzlibrary{shapes,arrows}
\usetikzlibrary{shapes.multipart}
\usepackage{verbatim}
\usepackage{tkz-berge}
\usepackage{tikz-qtree}

\setlength{\textwidth}{6in} \setlength{\textheight}{8in}
\setlength{\oddsidemargin}{0.1in}
\setlength{\evensidemargin}{\oddsidemargin}

\usepackage{amssymb,}
\usepackage[]{amsmath, amsthm, amsfonts,graphicx, amscd,}
\usepackage[all,cmtip]{xy}
\input amssym.def \input amssym
\usepackage{forest}
\usepackage{mdframed}
\usepackage{framed}

\interfootnotelinepenalty=10000

\begin{document}

\newtheorem{thm}{Theorem}[section]
\newtheorem{cor}[thm]{Corollary}
\newtheorem{claim}[thm]{Claim}
\newtheorem {fact}[thm]{Fact}
\newtheorem{con}[thm]{Conjecture}

\newtheorem*{thmstar}{Theorem}
\newtheorem{prop}[thm]{Proposition}
\newtheorem*{propstar}{Proposition}
\newtheorem {lem}[thm]{Lemma}
\newtheorem*{lemstar}{Lemma}
\newtheorem{conj}[thm]{Conjecture}
\newtheorem{question}[thm]{Question}
\newtheorem*{questar}{Question}
\newtheorem{ques}[thm]{Question}
\newtheorem*{conjstar}{Conjecture}
\newtheorem{fct}[thm]{Fact}
\theoremstyle{remark}
\newtheorem{rem}[thm]{Remark}
\newtheorem{exmp}[thm]{Example}
\newtheorem{cond}[thm]{Condition}
\newtheorem{np*}{Non-Proof}
\newtheorem*{remstar}{Remark}
\theoremstyle{definition}
\newtheorem{defn}[thm]{Definition}
\newtheorem*{defnstar}{Definition}
\newtheorem{exam}[thm]{Example}
\newtheorem*{examstar}{Example}
\newtheorem{assump}[thm]{Assumption}
\newtheorem{Thm}[thm]{Theorem}

\def \ta {\tau_{\mathcal{D}/\Delta}}
\def \D {\Delta}
\def \DD {\mathcal D}

\newcommand{\gen}[1]{\left\langle#1\right\rangle}

\newcommand{\pd}[2]{\frac{\partial #1}{\partial #2}}
\newcommand{\td}{\text{tr.deg.}}
\newcommand{\pp}{\partial }
\newcommand{\pdtwo}[2]{\frac{\partial^2 #1}{\partial #2^2}}
\newcommand{\od}[2]{\frac{d #1}{d #2}}
\def\Ind{\setbox0=\hbox{$x$}\kern\wd0\hbox to 0pt{\hss$\mid$\hss} \lower.9\ht0\hbox to 0pt{\hss$\smile$\hss}\kern\wd0}
\def\Notind{\setbox0=\hbox{$x$}\kern\wd0\hbox to 0pt{\mathchardef \nn=12854\hss$\nn$\kern1.4\wd0\hss}\hbox to 0pt{\hss$\mid$\hss}\lower.9\ht0 \hbox to 0pt{\hss$\smile$\hss}\kern\wd0}
\def\ind{\mathop{\mathpalette\Ind{}}}
\def\nind{\mathop{\mathpalette\Notind{}}}
\numberwithin{equation}{section}

\def\id{\operatorname{id}}
\def\Frac{\operatorname{Frac}}
\def\Const{\operatorname{Const}}
\def\spec{\operatorname{Spec}}
\def\span{\operatorname{span}}
\def\exc{\operatorname{Exc}}
\def\Div{\operatorname{Div}}
\def\cl{\operatorname{cl}}
\def\mer{\operatorname{mer}}
\def\trdeg{\operatorname{trdeg}}
\def\ord{\operatorname{ord}}

\newcommand{\m}{\mathbb }
\newcommand{\mc}{\mathcal }
\newcommand{\mf}{\mathfrak }
\newcommand{\is}{^{p^ {-\infty}}}
\newcommand{\QQ}{\mathbb Q}
\newcommand{\fh}{\mathfrak h}
\newcommand{\CC}{\mathbb C}
\newcommand{\RR}{\mathbb R}
\newcommand{\ZZ}{\mathbb Z}
\newcommand{\tp}{\operatorname{tp}}
\newcommand{\SL}{\operatorname{SL}}
\subjclass[2010]{11F03, 12H05, 03C60}

\title{Ax-Lindemann-Weierstrass with derivatives and the genus 0 Fuchsian groups}
\author[G. Casale]{Guy Casale}
\address{Guy Casale, Univ Rennes, CNRS, IRMAR-UMR 6625, F-35000 Rennes, France}
\email{guy.casale@univ-rennes1.fr}

\author[J. Freitag]{James Freitag}
\address{James Freitag, University of Illinois Chicago, Department of Mathematics, Statistics,
and Computer Science, 851 S. Morgan Street, Chicago, IL, USA, 60607-7045.}
\email{freitag@math.uic.edu}

\author[J. Nagloo]{Joel Nagloo}
\address{Joel Nagloo, Department of Mathematics and Computer Science\\ Bronx Community College CUNY, Bronx, NY 10453, USA.}
\email{joel.nagloo@bcc.cuny.edu}

\thanks{G. Casale is partially supported by Math-AMSUD project ``Complex geometry and foliations''. J. Freitag is partially supported by NSF grant DMS-1700095. J. Nagloo is partially supported by NSF grant DMS-1700336. }

\maketitle
{\centering\footnotesize \it To Keiji Nishioka on his retirement.\par}

\begin{abstract} We prove the Ax-Lindemann-Weierstrass theorem with derivatives for the uniformizing functions of genus zero Fuchsian groups of the first kind. Our proof relies on differential Galois theory, monodromy of linear differential equations, the study of algebraic and Liouvillian solutions, differential algebraic work of Nishioka towards the Painlev\'e irreducibility of certain Schwarzian equations, and considerable machinery from the model theory of differentially closed fields. 

Our techniques allow for certain generalizations of the Ax-Lindemann-Weierstrass theorem which have interesting consequences. In particular, we apply our results to give a complete proof of an assertion of Painlev\'e (1895). We also answer certain cases of the Andr\'e-Pink conjecture, namely in the case of orbits of commensurators of Fuchsian groups.  
\end{abstract}

\section{Introduction}

In this paper our central work is to prove a series of functional transcendence results for the automorphic functions $j_\Gamma$ associated with a \emph{Fuchsian group} $\Gamma$ of genus 0. We will also refer to the automorphic function $j_\Gamma$ as a Hauptmodul or uniformizing function of $\Gamma$. Our general results are most easily expressed in the language of model theory and algebraic differential equations, but a \emph{special case} of our functional transcendence results is what has come to be called the Ax-Lindemann-Weierstrass theorem with derivatives for $j_\Gamma$: 

\begin{thm} \label{OGalw} Let $\m C(V)$ be an algebraic function field, where $V\subset \m A^{n}$ is an irreducible algebraic variety defined over $\m C$. Let
\[t_1,\ldots,t_n\in \m C(V)\]
take values in the upper half complex plane $\m H$ at some $P\in V$ and are geodesically independent \footnote{We say that $t_1, \ldots , t_n $ are geodesically independent if $t_i$ is nonconstant for $i =1, \ldots , n$ and there are no relations of the form $t_i = \gamma t_j$ for $i \neq j$, $i,j \in \{1, \ldots , n \}$ and $\gamma$ is an element of the commensurator of $\Gamma$.}. Then the $3n$-functions
\[j_{\Gamma}(t_1),j'_{\Gamma}(t_1),j''_{\Gamma}(t_1)\ldots,j_{\Gamma}(t_n),j'_{\Gamma}(t_n),j''_{\Gamma}(t_n)\]
(considered as functions on $V(\m C)$ locally near $P$) are algebraically independent over $\m C(V)$.
\end{thm}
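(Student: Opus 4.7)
The plan is to translate the statement into a question about the solution set of a Schwarzian-type differential equation in a differentially closed field, and then to exploit strong minimality and trivial geometry. Let $(K,\delta)$ be a differential field with field of constants $\mathbb{C}$ obtained from $\mathbb{C}(V)$ by choosing a derivation $\delta$ so that $\delta(t_i)\neq 0$ for each $i$. Because $\Gamma$ has genus $0$, the Hauptmodul $j_\Gamma$ satisfies a third-order algebraic ODE $\chi_\Gamma(y)=0$ of Schwarzian type with coefficients rational in $y$, and each $y_i:=j_\Gamma(t_i)$ satisfies the pullback of this equation along $t_i$. The desired algebraic-independence assertion is then equivalent to the claim that, in a sufficiently saturated differentially closed extension $\mathcal{U}\supset K$, the tuples $\bar y_i:=(y_i,y_i',y_i'')$ are pairwise forking-independent over $\mathbb{C}(V)$; the Morley rank / transcendence degree dictionary will convert this into the statement of the theorem.

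The argument rests on two structural facts about the Kolchin-closed set $X_\Gamma:=\{y\in\mathcal{U} : \chi_\Gamma(y)=0,\ y'\neq 0\}$: that $X_\Gamma$ is strongly minimal, and that its induced geometry is trivial. Strong minimality I would derive from Nishioka's work on Painlev\'e irreducibility of Schwarzian equations, extended uniformly to all genus $0$ Fuchsian groups of the first kind, with differential Galois theory and the monodromy of the associated order-two Fuchsian linear ODE (whose quotient of solutions is the inverse of $j_\Gamma$) playing a supporting role: the monodromy computation forces the differential Galois group of the linearization to be $\mathrm{SL}_2(\mathbb{C})$, ruling out algebraic and Liouvillian solutions of $\chi_\Gamma$. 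Geometric triviality amounts to a complete description of binary non-orthogonality inside $X_\Gamma$, and I would aim to prove that any algebraic relation between two non-algebraic solutions $y,z\in X_\Gamma$ forces $z=\gamma\cdot y$ for some $\gamma$ in the commensurator of $\Gamma$ acting by M\"obius transformations on the upper half plane.

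Granted these two properties, the theorem follows by a standard model-theoretic computation: if the $3n$ functions were algebraically dependent over $\mathbb{C}(V)$, some $\bar y_j$ would fork with $\{\bar y_i : i\neq j\}$ over $\mathbb{C}(V)$; by strong minimality this produces a non-orthogonality between distinct $y_i,y_j$ in $X_\Gamma$, and by triviality such a non-orthogonality yields $y_i=\sigma y_j$ for some $\sigma$ in the commensurator of $\Gamma$. Transporting back to the analytic side gives $t_i=\sigma t_j$, contradicting geodesic independence. The main obstacle, where essentially all of the technical work lives, is establishing strong minimality and trivial geometry of $X_\Gamma$ for every genus $0$ Fuchsian group of the first kind: earlier work on the classical modular $j$ handles one example, and the general case demands a synthesis of Nishioka's irreducibility results, monodromy and differential Galois analysis of the linearization, and a careful identification of the binary algebraic relations on $X_\Gamma$ with the full commensurator of $\Gamma$ rather than merely with $\Gamma$ itself.
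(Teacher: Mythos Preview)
Your overall architecture --- equip $\mathbb{C}(V)$ with a derivation $\delta$ killing no $t_i$, analyze $j_\Gamma(t_i)$ via the Schwarzian equation, and invoke strong minimality plus triviality --- is the paper's plan. But there is a genuine gap at the point where you pass from ``each $y_i$ satisfies the pullback of $\chi_\Gamma$ along $t_i$'' to working inside the single set $X_\Gamma=\{\chi_\Gamma(y)=0\}$. With respect to $\delta$, the element $y_i=j_\Gamma(t_i)$ satisfies $\chi_{\Gamma,\delta}(y)=a_i$ with $a_i=S_\delta(t_i)\in\mathbb{C}(V)$, and these $a_i$ are in general distinct and nonzero. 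So the $y_i$ live in \emph{different} fibers of $\chi_{\Gamma,\delta}$, not all in $X_\Gamma$. Strong minimality and triviality of the zero fiber alone say nothing about algebraic relations between solutions in distinct fibers; that is precisely the content you are missing. The paper fills this in with two further results: (i) each fiber $\chi_{\Gamma,\delta}^{-1}(a)$ is itself strongly minimal and trivial (transferred from the zero fiber by changing the derivation to $\frac{1}{\tilde a'}\delta$ for some $\tilde a$ with $S_\delta(\tilde a)=a$), and (ii) distinct fibers $\chi_{\Gamma,\delta}^{-1}(a)$ and $\chi_{\Gamma,\delta}^{-1}(b)$ are orthogonal. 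Step (ii) is not formal and is where real work beyond your outline lies: one shows that a finite-to-finite correspondence between the zero fiber and the $b$-fiber would force the local map $\tilde b$ (with $S(\tilde b)=b$) to be a biholomorphism between disks, hence a M\"obius transformation by Schwarz's lemma, hence $b=0$. Only after (i) and (ii) can a dependence among the $y_i$ be pinned to a single fiber $a_i=a_j$ and then to a $\Gamma$-special polynomial, yielding $t_i=\gamma t_j$ with $\gamma\in\text{Comm}(\Gamma)$.

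A smaller point: you conflate geometric triviality with the classification of binary relations. In the paper these are separate. Triviality (any $n$-ary dependence collapses to a binary one) is obtained softly from the Hrushovski--Sokolovi\'c trichotomy, using only that the equation has order $>1$ and is defined over the constants. Identifying the binary relations with $\text{Comm}(\Gamma)$-special polynomials is a different argument, carried out by analyzing two commuting copies of the $\mathfrak{psl}_2$-action on the product and showing that any $D$-stable prime ideal of relations must contain a polynomial in $y_1,y_2$ alone, then invoking the $\Gamma$-invariance of $j_\Gamma$ to rule out $g\notin\text{Comm}(\Gamma)$. Also, your phrasing ``$z=\gamma\cdot y$'' and ``$y_i=\sigma y_j$'' is not literally correct: the commensurator acts on $\mathbb{H}$, not on solutions, and what one actually obtains is $\Psi_{\tilde\gamma}(y_i,y_j)=0$ for a $\Gamma$-special polynomial $\Psi_{\tilde\gamma}$, from which $t_i=\gamma t_j$ follows on the analytic side.
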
 

One can also describe Theorem \ref{OGalw} in more geometric terms. Let $W\subset \m A^{n}(\m C)$ be an algebraic variety which has a nonempty intersection with $\m H^n.$ Theorem \ref{OGalw} precisely characterizes those varieties $W$ whose image under the automorphic function (and derivatives) applied to each coordinate: $$\bar j_{\Gamma} :(t_1, \ldots  ,t_n) \mapsto (j_{\Gamma}(t_1),j'_{\Gamma}(t_1),j''_{\Gamma}(t_1)\ldots,j_{\Gamma}(t_n),j'_{\Gamma}(t_n),j''_{\Gamma}(t_n))$$ is contained in a proper algebraic subvariety of $\m C^{3n}$. Intuitively, the function $j_ \Gamma$ is highly transcendental, so the varieties obtained in this way should be restricted to a very special class. Indeed, Theorem \ref{OGalw} says that if $\bar j_{\Gamma} (W)$ is an algebraic variety, then $W$ must have been defined by instances of relation of the form $t_i = \gamma t_j $ where $\gamma$ is an element of the commensurator of $\Gamma$, giving a very restrictive (countable) class of complex varieties coming from the image of $\bar j_\Gamma$.

As we will explain in additional detail below, our methods also allow for more general results, which are most naturally stated in the language of model theory. For instance, statements incorporating other transcendental functions on additional coordinates (such as Weierstrass $\wp$-functions and exponential functions on semi-abelian varieties) similar to Theorem 1.6 of \cite{PilaAO} will follow from our general result. 

Theorem \ref{OGalw} is a generalization of Theorem 1.6 of \cite{PilaAO} and Theorem 1.1 \cite{Pila}, in which Pila established the special case with one group $\Gamma = PSL_2 ( \m Z)$ (in \cite{PilaAO} without derivatives and later in \cite{Pila} with derivatives). 
Theorem \ref{OGalw} also overlaps nontrivially with a number of recent results, which we detail next. Note that most of the following results do not involve the derivatives of the automorphic functions in question and are mainly concerned with arithmetic groups. 
Pila and Tsimerman \cite{AgALW} generalized Theorem 1.6 of \cite{PilaAO} to the uniformizing functions associated with the moduli spaces of higher dimensional abelian varieties (their result specializes to Theorem 1.6 of \cite{PilaAO} for purposes of comparing with Theorem \ref{OGalw}). In a different direction, Pila and Tsimerman \cite{AXS} generalized Theorem 1.6 of \cite{PilaAO} to an Ax-Schanuel type statement for the $j$-function. In \cite{ALWcom}, Ullmo and Yafaev prove an Ax-Lindemann-Weierstrass result for the uniformizing functions of cocompact Shimura varieties (without derivatives), and so a statement of Theorem \ref{OGalw} without derivatives in the case that $\Gamma$ is arithmetic and cocompact is a consequence of their work. Later, Klinger, Ullmo, and Yafaev \cite{HyperALW} removed the assumption of cocompactness, and Gao \cite{mixedALW} generalized the result to mixed Shimura varieties. Finally, Mok, Pila, and Tsimerman \cite{ASmpt} have established the (more general) Ax-Schanuel theorem with derivatives for the uniformizing function of a Shimura variety. 

The previous Ax-Lindemann-Weiestrass (ALW for short) results discussed above employ various techniques from group theory, complex variables, and number theory, but each one also shares a common element in their approach: a tool called \emph{o-minimality} originating in model theory. The theory of o-minimality is a natural generalization of real algebraic geometry to include certain non-oscillatory transcendental functions. It was developed starting in the 1980s by model theorists \cite{tameT}, but in the early 2000s, o-minimality was connected with various aspects of number theory in part through the work of Pila and Wilkie \cite{PW} and Peterzil and Starchenko \cite{PS1, PS2, PS3}. The counting theorem of Pila-Wilkie has precursors coming from number theory before the connection to o-minimality was made. See, for instance, the work of Bombieri and Pila \cite{bombpila} and the related manuscript of Sarnak \cite{Sarnak}. Diophantine properties of definable sets in o-minimal structures had also been previously investigated by Wilkie \cite{Wilkie}. However, following the Pila-Wilkie theorem, there has been an explosion of work. In \cite{PZ}, the Pila-Wilkie theorem was employed by Pila and Zannier to give a new proof of the Manin-Mumford conjecture. The strategy was immediately taken up by Masser and Zannier \cite{masser2010torsion} to prove a a special case of Pink's relative Manin-Mumford conjecture, while Pila \cite{PilaIMRN} gave new proofs of results of a Manin-Mumford-Andr\'e-Oort flavor.

The common line of reasoning in the results mentioned in the previous several paragraphs is to embed the problem in an o-minimal context by proving that the a certain analytic function (restricted to an appropriate fundamental domain) is interpretable in $\m R_{an, exp}$, an o-minimal structure in which the definable sets are given by inequalities built from the algebraic functions, the exponential function, and real analytic functions restricted to bounded sets. Following this, variants of the Pila-Zannier strategy or definable versions of results from complex geometry \cite{PS4} are used to detect and characterize algebraic relations.

Our approach is completely different, and \emph{does not} employ the theory of o-minimality at all. Rather, our proof relies on differential Galois theory, monodromy, the study of algebraic and Liouvillian solutions to linear differential equations, differential algebraic work of Nishioka towards the Painlev\'e irreducibility of certain Schwarzian equations, and considerable machinery from the model theory of differentially closed fields.  

Recently there has been a surge in interest around functional transcendence statements of the type in Theorem \ref{OGalw}, in part due to their connection with a class of problems from number theory called \emph{special points conjectures} or problems of \emph{unlikely intersections}; in \cite{PilaAO} the Ax-Lindemann-Weierstrass theorem is central to the proof of the Andr\'e-Oort conjecture for $\m C^n$. Each of the other functional transcendence results mentioned above can be applied in certain special points settings. For instance, in \cite{apAS} Daw and Ren give applications of the Ax-Schanuel conjecture proved in \cite{ASmpt}. Our functional transcendence results are no exception - we apply them to certain cases of a special points conjecture called the Andr\'e-Pink conjecture, following Orr \cite{Orr1, Orr2}. Numerous variations on the conjecture are possible (depending for instance, on the definition of Hecke-orbits one takes), but we will describe the specific setup next.   

Let $V$ be a connected Shimura variety with (connected) Shimura datum $(G,X^+)$ such that $V=\Gamma \setminus X^+$, for some congruence subgroup $\Gamma\subset G(\mathbb{Q})$ that stabilizes $X^+$. The Andr\'e-Pink conjecture predicts that when $W$ is an algebraic subvariety of $V$ and $S$ is the orbit of the commensurator of $\Gamma$, $\text{Comm}(\Gamma)$, on a point $\bar a = (a_1, \ldots , a_n)$, if $W \cap S$ is Zariski dense in $W$, then $W$ is of a \emph{very restrictive form}, which we will refer to as $\Gamma$-special, which we describe next.

 Let $j_\Gamma:X^+\rightarrow V$ be a uniformisation map. When $\gamma \in \text{Comm}(\Gamma),$ it turns out that $(j_\Gamma (t), j_\Gamma (\gamma t))$ are algebraically dependent and lie on an irreducible curve given by the vanishing of a polynomial in two variables which we will refer to as a \emph{$\Gamma$-special polynomial}. The $\Gamma$-special varieties are intersections of zero sets of $\Gamma$-special polynomials and relations of the form $x_i = b_i$ where $b_i$ is in the $\text{Comm}(\Gamma)$-orbit of $a_i$. Orr \cite{Orr1, Orr2} proved various special cases of the conjecture (for instance, when $W$ is an algebraic curve). In \cite{FreSca} Freitag and Scanlon used Pila's ALW with derivatives theorem from \cite{Pila} to prove the Andr\'e-Pink conjecture when $\bar a$ is assumed to be a transcendental point and $\Gamma$ is commensurable with $PSL_2(\m Z)$. In this paper, we generalize that result to allow for an arbitrary Fuchsian group $\Gamma .$ 

The central idea employed is a beautiful technique which has its origins in the work of Hrushovski \cite{ML1} and Buium \cite{Buium}. In order to understand intersections of algebraic varieties with an arithmetically defined set of points (e.g. torsion points on an algebraic group, Hecke orbits, etc.), replace the arithmetic set with a more uniformly defined algebraic object, the solution set of some algebraic differential differential or difference equation. 

We replace our arithmetic objects (the orbits of the commensurators of some discrete groups, $\Gamma$) by the solution sets of certain differential equations satisfied by the uniformizing functions $j_\Gamma$. An inherent restriction of the technique is that it generally only works for diophantine problems in function fields, hence the assumption that $\bar a$ is a tuple of transcendentals.  In pursuing our approach to the Andr\'e-Pink conjecture, it becomes necessary to prove more far reaching functional transcendence results than the ALW theorem as stated above; our results are most naturally phrased in terms of the \emph{model theory of differential fields}, one of the main tools we use to establish our results. One of the chief advantages of this approach is that it leads to an \emph{effective} solution of our case of Andr\'e-Pink, that is, we are able to give bounds on the degree of the Zariski closure of the intersection of $\text{Comm}(\Gamma_i)$-orbits with a variety $V$, which depend on algebro-geometric invariants of the variety $V$. So, for instance, if the variety $V$ is a non-special curve (or a variety which does not contain a special curve), we can give a bound on the number of special points contained in the curve.

At the relevant sections of our paper (e.g. \ref{algreltriv}) we will give equivalent formulations in algebro-geometric language of the model-theoretic properties we describe next. We prove that for any Fuchsian group $\Gamma$, the set defined by the differential equation satisfied by the uniformizing function $j_\Gamma$ is strongly minimal and has geometrically trivial forking geometry. This result generalizes work of \cite{FreSca} which covers the cases when $\Gamma$ is commensurable with $PSL_2(\m Z)$. In particular, our work gives many new examples of geometrically trivial strongly minimal sets in differentially closed fields. This also establishes an interesting new connection between two important dividing lines on the logic and group theory: the differential equation satisfied by $j_\Gamma$ is $\aleph_0$-categorical if and only if the group $\Gamma$ is not arithmetic. Further, we characterize all instances of nonorthogonality between these sets (each such instance comes from commensurability of two groups $\Gamma_1$ and $\Gamma_2$). These results also have various interesting consequences related to determining the isomorphism invariants of differentially closed fields, which we will not explore further in this article. 

We should also mention that this work gives a complete proof of an assertion of Painlev\'e \cite[Page 519]{pain}, concerning the \emph{irreducibility} of the differential equations satisfied by $j_\Gamma$ for $\Gamma$ a Fuchsian group. Irreducibility is closely related to the \emph{strong minimality} of a differential equation, a connection pointed out in detail by Nagloo and Pillay \cite{NagPil}. The original definition of irreducibility applies to nonlinear differential equations and was given by Painlev\'e \cite[pages 490-496]{pain}. A definition (for functions) using more modern language was given by Umemura, for instance see \cite[pages 754-755]{Okamato}. There have been claims (usually non-specific) that Painlev\'e's definition is not completely rigorous. For instance, see the third paragraph of page 755 of \cite{Okamato} and page 772 of \cite{Umemura1}. These claims seem to originate with Umemura \cite{Umemura0}, however the only specific complaint with Painlev\'e's definitions which we find there is related to some subtleties around algebraic and analytic groups (for instance, see pages three and eight). Similar points are made also in \cite{Umemura-1}. These complaints seem mainly to affect some proofs of results from \cite{pain}, but not directly the \emph{definition} of irreducibility.

In \cite{NishiokaI} and \cite{NishiokaII}, Nishioka proved a weak form of Painlev\'e's assertion; various techniques from Nihsioka's paper have inspired our work. 

\subsection*{Acknowledgements}
G.C and J.N take this opportunity to thank the organizers of the CIRM meeting {\emph ``Algebra, Arithmetic and Combinatorics of Differential and Difference Equations''} in May 2018, where this research collaboration started. We also thank the anonymous referees for their comments and suggestions.

\section{The basic theory}\label{basic}

\subsection{Fuchsian groups and the associated Schwarzian equations}
We direct the reader to \cite{Katok} and \cite{Lehner} for the basics on Fuchsian groups and the corresponding automorphic functions. The appendices of \cite{Venkov} also give a very detailed introduction to the associated Schwarzian equations.

\par Let $\mathbb{H}$ be the upper half complex plane and let $\overline{\mathbb{H}}:=\mathbb{H}\cup{\bf P}^1(\mathbb{R})$. Recall that $SL_2(\mathbb{R})$ and $PSL_2(\mathbb{R})$ acts on $\mathbb{H}$ (and $\overline{\mathbb{H}}$) by linear fractional transformation: for 
$\begin{pmatrix}
    a & b \\
    c & d
  \end{pmatrix}
  \in SL_2(\mathbb{R})$ and $\tau\in\mathbb{H}$
\[ \begin{pmatrix}
    a & b \\
    c & d
  \end{pmatrix}\cdot\tau=\frac{a\tau+b}{c\tau+d}.
\]
This action yields all the orientation preserving isometries of $\mathbb{H}$.
\par Let $\Gamma\subset PSL_2(\mathbb{R})$ be a Fuchsian group, that is, assume that $\Gamma$ is a discrete subgroup of $PSL_2(\mathbb{R})$. A point $\tau\in\overline{\mathbb{H}}$ is said to be a cusp if its stabilizer group $\Gamma_{\tau}=\{g\in\Gamma\;:\;g\cdot \tau=\tau\}$ has infinite order. We also assume throughout that $\Gamma$ is of first kind ({\it i.e.,} its limit set is ${\bf P}^1(\mathbb R)$) and of genus zero\footnote{The methods of proof and results of the current article can be generalized with additional effort to the case of arbitrary genus. This will be tackled in a forthcoming work of the authors along with D. Bl\'asquez-Sanz around Ax-Schanuel Theorems for Fuchsian functions.} ({\it i.e.,} $\Gamma \setminus \mathbb H$
can be compactified to a compact Riemann surface of genus 0, cf. paragraph after Example \ref{exmp1}). For any point $\tau\in\mathbb{H}$, the group $\Gamma_{\tau}$ is finite and cyclic. A point $\tau\in\mathbb{H}$ is said to be elliptic of order $\ell\geq 2$ if $|\Gamma_{\tau}|=\ell$. Our assumptions on $\Gamma$ ensure that modulo $\Gamma$ there are only finitely many orbits under $\Gamma$ of elliptic points. If $m_1,\ldots,m_r$ denotes the orders of the elliptic points as well as of those of the cusps (which would be $\infty$'s), then  $\Gamma$ is said to have signature $(0;m_1,\ldots,m_r)$. The zero here reflects that $\Gamma$ has genus $0$. The group then has the following presentation
\[\Gamma=\gen{g_1,\ldots,g_r\;:\;g_1^{m_1}=\ldots=g_r^{m_r}=g_1\cdots g_r=I}\] 
\noindent When one or more of the $m_i$'s are infinity, one simply remove the relations containing the infinite $m_i$'s in the above presentation.

\begin{exmp}\label{exmp1}
$PSL_2(\mathbb{Z})$ is a Fuchsian (triangle) group of type $(0;2,3,\infty)$.  Recall that traditionally we might consider the following generators of  $SL_2(\mathbb{Z})$:
\[ T=\begin{pmatrix}
    1 & 1 \\
    0 & 1
  \end{pmatrix} \;\;\;,\;\;\; S=\begin{pmatrix}
    0 & -1 \\
    1 & 0
  \end{pmatrix}.\]
  Nonetheless, by setting $g_1=-S$, $g_2=-T^{-1}S$ and $g_3=T$ one has that
\[SL_2(\mathbb{Z})=\gen{g_1,g_2,g_3\;:\;g_1^{2}=g_2^3=g_1g_2g_3=-I}.\]
  $PSL_2(\mathbb{Z})$ is obtained from the above using the natural projection $\pi:SL_2(\mathbb{R})\rightarrow PSL_2(\mathbb{R})$.
\end{exmp}

As is well known, $\Gamma$ acts on the set $C_{\Gamma}$ of its cusps and the action of $\Gamma$ on $\mathbb{H}_{\Gamma}:=\mathbb{H}\cup C_{\Gamma}$, yields a compact Riemann surface $\Gamma\setminus \mathbb{H}_{\Gamma}$ or equivalently a projective non-singular curve $X(\Gamma)$, which is of genus zero. The group $\Gamma$ is said to be cocompact if $C_{\Gamma}=\emptyset$. In other words, if the quotient $\Gamma\setminus \mathbb{H}$ is already a compact space. By an \emph{automorphic function} for $\Gamma$, we mean a meromorphic function $f$ on $\mathbb{H}$ which is meromorphic at every cusp of $\Gamma$  and which is invariant under the action of $\Gamma$:
\[f(g\cdot \tau)=f(\tau)\;\;\;\text{ for all }\;g\in\Gamma\text{ and } \tau\in\mathbb{H}.\]
One has that the field of automorphic functions $\mathcal{A}_0(\Gamma)$ for $\Gamma$ (or equivalently the field of meromorphic functions of $\Gamma\setminus \mathbb{H}_{\Gamma}$) is isomorphic to the field $\mathbb{C}(X(\Gamma))$ of rational functions on $X(\Gamma)$. By an Hauptmodul or uniformizer $j_{\Gamma}(t)$ for $\Gamma$ we mean an automorphic function for $\Gamma$ which generates $\mathcal{A}_0(\Gamma)$ (and so $\m C(j_{\Gamma})\simeq\mathbb{C}(X(\Gamma))$). We will also write $j_{\Gamma}$ for the biholomorphism $\Gamma\setminus \mathbb{H}_{\Gamma}\rightarrow{\bf P}^1(\mathbb{C})$. Let us point out that the function $j_{\Gamma}$ is not unique. This follows from the existence of nontrivial automorphisms of the curve $X(\Gamma)$. Moreover, it is well known that the function $j_{\Gamma}$ \emph{is unique} once its values at three points have been specified.

The uniformizer $j_{\Gamma}$ also satisfies a third order ordinary differential equation of Schwarzian type:

\begin{equation}\tag{$\star$} \label{stareqn}
S_{\frac{d}{dt}}(y) +(y')^2\cdot R_{j_{\Gamma}}(y) =0
\end{equation}

where $S_{\frac{d}{dt}}(y)=\left(\frac{y''}{y'}\right)' -\frac{1}{2}\left(\frac{y''}{y'}\right)^2$ denotes the Schwarzian derivative ($'=\frac{d}{dt}$) and $R_{j_{\Gamma}}\in\mathbb{C}(y)$ depends on the choice of $j_{\Gamma}$. Moreover, the `shape' of the function $R_{j_{\Gamma}}$, depends on knowing the fundamental half domain for the $\Gamma$-action on $\mathbb{H}$: Let us assume that it is given by a polygon $P$ with $r$ vertices $b_1,\dots,b_r$ and whose sides are identified by pairs and having internal angles
$\alpha_1\pi,\ldots,\alpha_r\pi$. Then
\[R_{j_{\Gamma}}(y)=\frac{1}{2}\sum_{i=1}^{r}{\frac{1-\alpha_i^2}{(y-a_i)^2}}+\sum_{i=1}^{r}{\frac{A_i}{y-a_i}}\]

where $j_{\Gamma}(b_i)=a_i$ and the $A_i$'s are real numbers that do not depend on $j_{\Gamma}$ and satisfy some very specific algebraic relations (cf. \cite[page 142]{Venkov}). 

\begin{exmp}\label{jfunction} A well-known example is $\Gamma=PSL_2(\mathbb{Z})$ and $j_{\Gamma}$ is the classical $j$-function. In this case the equation is given with
\[
R_{j}(y)=\frac{y^2-1968y+2654208}{y^2(y-1728)^2}
\]
$\Gamma = PSL_2 (\m Z)$ is an example of a \emph{triangle group}. In the appendix the case of the Fuchsian triangle groups is explained in more details. We also direct the reader to \cite{BayTra1} where more examples of uniformizers - beyond those attached to triangle groups - are studied.
\end{exmp}

There is a long tradition of functional transcendence results around automorphic functions. For instance, a very weak form of our results was conjectured by Mahler, and answered by Nishioka:
\begin{fct}[\cite{Nish}]\label{nishioka1}
The Hauptmodul $j_{\Gamma}$ satisfies no algebraic differential equation of order two or less over $\mathbb{C}(t,e^{ut})$, for any $u\in \m C$. The same is true for all $\Gamma$-automorphic functions.
\end{fct}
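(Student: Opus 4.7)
The plan is to argue by contradiction, reducing the claim to a transcendence-degree computation and then exploiting differential Galois theory. Suppose $j_\Gamma$ satisfies an algebraic differential equation of order at most $2$ over $k := \mathbb{C}(t, e^{ut})$. Using the Schwarzian equation $(\star)$, which expresses $j_\Gamma^{(3)}$ rationally in $j_\Gamma, j_\Gamma', j_\Gamma''$, the differential field $k\langle j_\Gamma\rangle$ equals $k(j_\Gamma, j_\Gamma', j_\Gamma'')$, so the hypothesis is equivalent to $\trdeg_k k(j_\Gamma, j_\Gamma', j_\Gamma'') \leq 2$.

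First I would establish the weaker assertion $\trdeg_{\mathbb{C}(t)} \mathbb{C}(t, j_\Gamma, j_\Gamma', j_\Gamma'') = 3$. Passing to the inverse uniformization $\tau = j_\Gamma^{-1}$, we locally have $\tau = u_1/u_2$ where $u_1, u_2$ are linearly independent solutions of a second-order Fuchsian linear ODE $Lu = 0$ on $\mathbb{P}^1$ with regular singularities at the points $a_i$ appearing in $R_{j_\Gamma}$. The projective monodromy of $L$ is precisely $\Gamma$, and since $\Gamma$ is of the first kind it is Zariski dense in $\operatorname{PSL}_2(\mathbb{R})$, hence in $\operatorname{PSL}_2(\mathbb{C})$; by Schlesinger's density theorem the differential Galois group of $L$ over $\mathbb{C}(y)$ is therefore $\SL_2(\mathbb{C})$, and its Picard-Vessiot extension has transcendence degree $3$ over $\mathbb{C}(y)$. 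A chain-rule computation using $\tau'(y) = 1/j_\Gamma'(t)$ and $\tau''(y) = -j_\Gamma''(t)/j_\Gamma'(t)^3$ identifies $\mathbb{C}(y)(\tau, \tau', \tau'')$ with $\mathbb{C}(j_\Gamma, t, j_\Gamma', j_\Gamma'')$, and the tower law for transcendence degree then yields $\trdeg_{\mathbb{C}(t)} \mathbb{C}(t, j_\Gamma, j_\Gamma', j_\Gamma'') = 3$.

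The main obstacle is upgrading the base field to include $e^{ut}$ (the case $u=0$ being immediate); equivalently, one must show $e^{ut}$ is transcendental over $\mathbb{C}(t, j_\Gamma, j_\Gamma', j_\Gamma'')$ when $u \neq 0$. I would obtain this through the Tannakian framework of Picard-Vessiot theory: the field $\mathbb{C}(t, j_\Gamma, j_\Gamma', j_\Gamma'')$ sits inside the Picard-Vessiot closure over $\mathbb{C}(t)$ of the transported linear system $L$, with Galois group $\SL_2(\mathbb{C})$, while $\mathbb{C}(t, e^{ut})$ is a $\mathbb{G}_m$-Picard-Vessiot extension of $\mathbb{C}(t)$ coming from $y' = uy$. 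Since $\SL_2(\mathbb{C})$ is semisimple and admits no nontrivial character, the compositum of these two extensions has differential Galois group $\SL_2(\mathbb{C}) \times \mathbb{G}_m$, so the two extensions are linearly disjoint over $\mathbb{C}(t)$, yielding the required transcendence. The extension of the result to arbitrary $\Gamma$-automorphic functions is formal: every such $f$ lies in $\mathcal{A}_0(\Gamma) = \mathbb{C}(j_\Gamma)$, so $f, f', f''$ are rational in $j_\Gamma, j_\Gamma', j_\Gamma''$, and a putative order $\leq 2$ equation for $f$ would translate, after clearing denominators, into an order $\leq 2$ equation for $j_\Gamma$ over the same field.
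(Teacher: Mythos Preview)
The paper does not prove this statement; it is quoted as a known result of Nishioka with a citation, so there is no ``paper's own proof'' to compare against. Your step 2 is correct and in fact anticipates exactly what the paper establishes later in Section~4.2 (the projective monodromy of the associated Fuchsian linear equation is $\Gamma$, hence Zariski dense, hence the Picard--Vessiot group is $\SL_2(\mathbb{C})$).

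The gap is in step 3. The linear system $L$ lives over $\mathbb{C}(y)$, and when you ``transport'' it to the $t$-variable via $y=j_\Gamma(t)$ its coefficients pick up $j_\Gamma, j_\Gamma', j_\Gamma''$; there is no second-order linear equation over $\mathbb{C}(t)$ whose Picard--Vessiot extension contains $F=\mathbb{C}(t,j_\Gamma,j_\Gamma',j_\Gamma'')$. In fact $F/\mathbb{C}(t)$ is not even strongly normal in Kolchin's sense: the other solutions of $(\star)$ are $j_\Gamma(gt)$, and these are generically transcendental over $F$ (this is precisely the ALW theorem), so the compositum $F\cdot\sigma(F)$ is not generated over $F$ by constants. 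Consequently the ``$\SL_2$ has no characters, so the compositum has group $\SL_2\times\mathbb{G}_m$'' argument has no base over which to run. What does work is to stay over $\mathbb{C}(y)$: one must show $e^{u\tau}$ is transcendental over the genuine PV extension $E/\mathbb{C}(y)$. If not, Kolchin gives $e^{v\tau}\in E$ for some $v\neq 0$; the unipotent subgroup $\tau\mapsto\tau+b$ then forces $e^{v\tau}$ into $E^U=\mathbb{C}(y)(u_2,u_2')$, where $(e^{v\tau})_y/e^{v\tau}=v/u_2^2$. A valuation argument at $u_2=0$ (the logarithmic derivative of any element has at worst a simple pole there, but $v/u_2^2$ has a double pole) finishes it. This extra step is the content you are missing; it is not a consequence of the Tannakian slogan you invoke.
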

Using the Seidenberg's embedding theorem and the composition rule of the Schwarzian derivative, we also have
\begin{lem}[cf. \cite{FreSca}]\label{fct1}
Let $K$ be an abstract differential field extension of $\mathcal{C}(t)$ generated by $y_1, \ldots , y_n$ solutions of equation (\ref{stareqn}). Here $\mathcal{C}$ is a finitely generated subfield of $\m C$. Then there are elements $g_1, \ldots , g_n \in GL_2(\m C)$ such that $$K \cong\mathcal{C} \langle t, j_\Gamma (g_1 t) , \ldots , j_\Gamma ( g_n t) \rangle.$$ 
\end{lem}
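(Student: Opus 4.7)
The plan is to pass from the abstract setting to an analytic one via Seidenberg's embedding theorem, and then exploit the composition rule for the Schwarzian derivative to identify each $y_i$ as a $GL_2(\mathbb{C})$-translate of $j_\Gamma$. Concretely, Seidenberg's theorem gives a differential embedding of $K$ into the field of meromorphic functions on some connected open set $U \subset \mathbb{C}$ sending the abstract derivation to $d/dt$. After this embedding each $y_i$ is a meromorphic function on $U$ satisfying (\ref{stareqn}), and $j_\Gamma$ itself satisfies (\ref{stareqn}) on $\mathbb{H}$.

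Shrinking $U$ if necessary, I would choose a small disk $D \subset U$ and, for each $i$, a holomorphic local inverse branch of $j_\Gamma$ near the value $y_i(D)$ (possible since $j_\Gamma$ is surjective onto $\mathbb{P}^1(\mathbb{C})$ and has only finitely many critical values, which we may avoid by choosing the base point generically). Setting $\tau_i := j_\Gamma^{-1} \circ y_i$ on $D$, so that $y_i = j_\Gamma \circ \tau_i$, the Schwarzian composition rule
\[
S_{d/dt}(f \circ h) = (h')^2\, S_{d/dt}(f)\circ h + S_{d/dt}(h),
\]
applied with $f = j_\Gamma$ and $h = \tau_i$, combined with the fact that $j_\Gamma$ solves (\ref{stareqn}), collapses the equation for $y_i$ to the identity $S_{d/dt}(\tau_i) = 0$. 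A function with vanishing Schwarzian is a Möbius transformation, hence $\tau_i(t) = g_i \cdot t$ for some $g_i \in GL_2(\mathbb{C})$. By analytic continuation, the local identity $y_i(t) = j_\Gamma(g_i \cdot t)$ persists wherever both sides make sense, and so the generators $y_i$ of $K$ are identified, inside the meromorphic function field, with $j_\Gamma(g_i t)$. This yields the desired isomorphism $K \cong \mathcal{C}\langle t, j_\Gamma(g_1 t), \ldots, j_\Gamma(g_n t)\rangle$.

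The main obstacle is the geometric bookkeeping rather than anything algebraic: one must ensure the domain $U$ (and the base point inside it) can be chosen so that the local inverse branches of $j_\Gamma$ at each $y_i(U)$ exist simultaneously and avoid the ramification values coming from elliptic points and cusps. Once this is arranged, the parameterization of all solutions of (\ref{stareqn}) by the $GL_2(\mathbb{C})$-action on $t$ drops out automatically from the Schwarzian composition rule.
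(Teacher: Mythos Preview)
Your proposal is correct and follows essentially the same route as the paper: Seidenberg's embedding theorem to realize the $y_i$ as meromorphic functions, a local factorization $y_i = j_\Gamma \circ \tau_i$ through an inverse branch of $j_\Gamma$, and the Schwarzian composition rule to force $S_{d/dt}(\tau_i)=0$, hence $\tau_i = g_i t$. The paper's proof is terser (it defers the composition-rule computation to \cite{FreSca}) but the argument is the same; your added care about avoiding ramification values when choosing the base point is a sensible elaboration of a step the paper leaves implicit.
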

 \begin{proof} 
By Seidenberg's embedding theorem, we may assume that $y_1, \ldots , y_n$ are meromorphic functions on some domain $U$ contained in $\m H$. Since the $j_{\Gamma}$ is a non constant holomorphic function from $\m H$ to $\m C$, there are holomorphic functions $\psi_i:U\rightarrow{\m H}$, such that $y_i(t)=j_{\Gamma}(\psi_i(t))$. Repeating the arguments in \cite{FreSca} - using the composition rule for $S_{\frac{d}{dt}}(y)$ and the fact that $j_{\Gamma}(\psi_i(t))$ is a solution of the equation (\ref{stareqn}) - we get that $S_{\frac{d}{dt}}(\psi_i(t))=0$. Hence $\psi_i(t)=g_it$ for some $g_i\in GL_2(\mathbb{C})$.
 \end{proof} 
\begin{rem}
Notice that the $g_1, \ldots , g_n$ are not arbitrary elements of $GL_2(\m C)$. Indeed, since the $y_i(t)$'s are meromorphic on $U\subset \m H$, it must be that $g_i:U\rightarrow \mathbb{H}$. Also, for each $i$, from the inverse $g^{-1}_i$ of $g_i$ we have well defined solutions $j_\Gamma (g^{-1}_i t)$ and $j_\Gamma (g_jg^{-1}_i t)$ of (\ref{stareqn}).
\end{rem}

In this paper, depending on the context, we will freely alternate between thinking of solutions of the Schwarzian equation (\ref{stareqn}) as points in an abstract differential field or as meromorphic functions of the form $j_\Gamma (gt)$. The latter form will always mean that $g$ is an element of $GL_2(\m C)$ that maps (a subset of) $\mathbb{H}$ to $\mathbb{H}$.

\subsection{Arithmetic Fuchsian groups} 

We have already seen one important dividing line among those $\Gamma$, which we consider, namely whether or not $\Gamma$ is cocompact. Another, perhaps even more important (for our work) property that $\Gamma$ might possess is that of \emph{arithmeticity}. We will begin by reviewing some key definitions. A standard reference for this subsection is \cite{Vigneras}. Throughout $\Gamma\subset PSL_2(\mathbb{R})$ is a Fuchsian group of first kind of genus zero.
\par Let $F$ be a field of characteristic zero and let $A$ be a quaternion algebra over $F$: a central simple algebra of dimension $4$ over $F$. Since the characteristic of $F$ is zero, there are elements $i$ and $j$ in $A$ and $a,b\in F^*$ such that
\[i^2 =a,\;\;\; j^2=b,\;\;\; ij=-ji,\]
and $A=F+Fi+Fj+Fij$. As customary, we use the Hilbert symbol notation $A=\left(\frac{a,b}{F}\right)$.  For $\alpha=a_0+a_1i+a_2j+a_3ij\in A$, we define its conjugation as $\overline{\alpha}=a_0-a_1i-a_2j-a_3ij\in A$. Then, the reduced trace $tr(\alpha)$ is defined to be $\alpha+\overline{\alpha}=2a_0\in F$ and the reduced norm $n(\alpha)$ is defined to be $\alpha\overline{\alpha} =a _0^2-a_1^2a-a_2^2b+a_3^2ab\in F$.

\begin{exmp}
For example, the $2\times 2$ matrices over $F$ is given by $M_{2}(F)=\left(\frac{1,1}{F}\right)$ and in this case the norm is simply the determinant.
\end{exmp}

If $F=\mathbb{R}$ or a non-Archimedean local field, then up to isomorphism, there are only two quaternion algebras: $M_{2}(F)$ or a division algebra. When $F$ is a number field and $v$ a place of  $F$, we say that $A$ splits at $v$ if the localization $A\otimes_F F_v$ is isomorphic to $M_{2}(F_v)$. Here $F_v$ denote the completion of $F$ with respect to $v$. If on the other hand $A\otimes_F F_v$ is isomorphic to a division algebra, we say $A$ ramifies at $v$. It is known that the number of ramified places is finite and the discriminant of $A$ is defined as the product of the finite ramified places.
\par Assume now that $F$ is a totally real number field of degree $k+1$ and we denote by $\mathcal{O}_F$ its ring of integers. Assume further that $A$ splits at exactly one infinite place, that is,
\[A\otimes_{\mathbb{Q}} \mathbb{R}\simeq M_{2}(\mathbb{R})\times \mathcal{H}^k\]
where $\mathcal{H}$ is Hamilton's quaternion algebra $\left(\frac{-1,-1}{\mathbb{R}}\right)$. Then, up to conjugation, there is a unique embedding $\rho$ of $A$ into $M_{2}(\mathbb{R})$. In particular for any $\alpha\in A$, one has that $n(\alpha)=det(\rho(\alpha))$.
\par Let $\mathcal{O}$ be an order in $A$, namely a finitely generated $\mathcal{O}_F$-module that is also a ring with unity containing a basis for
$A$, that is $\mathcal{O}\otimes_{\mathcal{O}_F} F\simeq A$. Denote by $\mathcal{O}^1$ the norm-one group of $\mathcal{O}$, that is $\mathcal{O}^1=\{\alpha\in \mathcal{O}\;:\;n(\alpha)=1\}$. Then the image $\rho(\mathcal{O}^1)$ of $\mathcal{O}^1$ under $\rho$ is a discrete subgroup of $SL_2(\mathbb{R})$. We denote by $\Gamma(A,\mathcal{O})$  the projection in $PSL_2(\mathbb{R})$ of the group $\rho(\mathcal{O}^1)$.

\begin{defn}
The group $\Gamma$ is said to be arithmetic if it is commensurable with a group of the form $\Gamma(A,\mathcal{O})$.
\end{defn}

Perhaps the best known example of an arithmetic group is $PSL_2(\mathbb{Z})$. Recall that two groups $\Gamma_1$ and $\Gamma_2$ are commensurable, denoted by $\Gamma_1\sim\Gamma_2$,  if their intersection $\Gamma_1\cap\Gamma_2$ has finite index in both $\Gamma_1$ and $\Gamma_2$.

If $\Gamma$ is arithmetic, then the quotient $\Gamma\setminus \mathbb{H}$ is called a {\it Shimura curve}. In this article, by abuse of terminology we will refer to $\Gamma\setminus \mathbb{H}$ as a Shimura curve of genus $g$ if and only if $\Gamma\setminus \mathbb{H}_{\Gamma}$ is of genus g, and we are interested solely in the case where $g = 0$. As is well known, Shimura curves are generalizations of classical modular curves. We direct the reader to \cite{BayTra} and \cite{Tu} where the Schwarzian equations for many examples of these curves are derived and studied.

We now look at the connection between arithmeticity of $\Gamma$ and existence of correspondences on ${\bf P}^1(\mathbb{C})\times{\bf P}^1(\mathbb{C})$ whose preimage under $j_\Gamma$ is also algebraic (cf. \cite{Mochi} and \cite{Shim}). Let $\text{Comm}(\Gamma)$ be the commensurator of $\Gamma$, namely
\[\text{Comm}(\Gamma)=\{g\in PSL_2(\mathbb{R})\;:\; g\Gamma g^{-1}\sim\Gamma\}.\]
By a $\text{Comm}(\Gamma)$-correspondence on ${\bf P}^1(\mathbb{C})\times{\bf P}^1(\mathbb{C})$ we mean a subset of the form
\[X(\Gamma g \Gamma)=\{j_{\Gamma}(\tau)\times j_{\Gamma}(g\cdot\tau):\tau\in\mathbb{H}_{\Gamma}\}\]
where $g\in \text{Comm}(\Gamma)$. It turns out that $X(\Gamma g \Gamma)$ is an absolutely irreducible curve and that it depends only on the coset $\Gamma g \Gamma$ and not on the choice of $g$ (cf. \cite{Shim} Chapter 7). We suppose that $X(\Gamma g \Gamma)$ is given by the equation $\Psi_{\tilde{g}}(X,Y)=0$, so that $\Psi_{\tilde{g}}(j_{\Gamma},j_{\Gamma}(gt))=0$. We write $\tilde{g}$ to highlight that the equation depends on $\Gamma g \Gamma$ and not $g$. With this notation, for $g_{1},g_{2}\in GL_2(\mathbb{C)}$ we more generally say that $j_{\Gamma}(g_{1}t)$ and $j_{\Gamma}(g_{2}t)$ are in $\text{Comm}(\Gamma)$-correspondence if $\Psi_{\tilde{g}}(j_{\Gamma}(g_{1}t), j_{\Gamma}(g_{2}t))=0$ for some $\Gamma g \Gamma$. One has the following result of Margulis:

\begin{fct}[\cite{Marg}]
The group $\Gamma$ is arithmetic if and only if $\Gamma$ has infinite index in $\text{Comm}(\Gamma)$ and as a result there are infinitely many $\text{Comm}(\Gamma)$-correspondences.
\end{fct}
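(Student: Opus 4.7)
The plan splits into the two implications, handled quite asymmetrically, since the reverse direction is the celebrated commensurator arithmeticity theorem of Margulis.

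For the forward direction (arithmeticity implies $[\text{Comm}(\Gamma):\Gamma]=\infty$), I would reduce to the case $\Gamma=\Gamma(A,\mathcal{O})$, using that arithmeticity is preserved under commensurability. The key observation is that the projectivization of $\rho(A^\times)$ sits inside $\text{Comm}(\Gamma(A,\mathcal{O}))$: for any $\alpha\in A^\times$ of positive reduced norm, the conjugate $\alpha\mathcal{O}\alpha^{-1}$ is again an order in $A$, hence commensurable with $\mathcal{O}$, so conjugation by $\rho(\alpha)$ preserves $\Gamma(A,\mathcal{O})$ up to finite index. To extract infinitely many double cosets $\Gamma g\Gamma$, I would choose elements $\alpha_p\in\mathcal{O}$ whose reduced norm is a rational prime $p$ where $A$ is unramified (such elements exist by local-global considerations on the norm form); these Hecke-type elements lie in pairwise distinct double cosets because the reduced norm, modulo squares of units of $\mathcal{O}_F$, is a double-coset invariant.

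For the reverse direction, the strategy is the classical arithmeticity criterion via dense commensurator. First one establishes the dichotomy that either $\text{Comm}(\Gamma)$ is itself a Fuchsian group containing $\Gamma$ with finite index, or $\text{Comm}(\Gamma)$ is dense in $PSL_2(\mathbb{R})$; the infinite index hypothesis rules out the first possibility. From density of $\text{Comm}(\Gamma)$ one then extracts arithmetic structure on $\Gamma$: the invariant trace field (i.e.\ the field generated by the traces of the subgroup generated by the squares, to eliminate sign issues on passing to $PSL_2$) is shown to be a totally real number field, traces are algebraic integers, and the $\mathbb{Q}$-subalgebra of $M_2(\mathbb{R})$ generated by that subgroup is a quaternion algebra $A$ over the invariant trace field, splitting at exactly one archimedean place. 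A standard Weil-type construction then produces an order $\mathcal{O}\subset A$ with $\Gamma\sim\Gamma(A,\mathcal{O})$.

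The main obstacle is the step that extracts genuine number-field structure out of purely topological density of $\text{Comm}(\Gamma)$; this is precisely where the depth of Margulis' theorem resides, and in practice I would quote it as a black box rather than reconstruct the argument. The final clause of the statement is then immediate: each $g\in\text{Comm}(\Gamma)\setminus\Gamma$ determines a correspondence $X(\Gamma g\Gamma)$, distinct double cosets yield distinct correspondences, and infinite index of $\Gamma$ in $\text{Comm}(\Gamma)$ produces infinitely many such double cosets.
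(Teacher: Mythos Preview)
The paper does not prove this statement at all: it is recorded as a \texttt{Fact} with the bare citation \cite{Marg} and no argument, since the authors only need it as input to their later results. Your proposal therefore supplies strictly more than the paper does.

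As an outline your sketch is sound. The forward direction via elements of $A^\times$ of varying reduced norm is the standard source of Hecke-type correspondences, and your observation that each double coset $\Gamma g\Gamma$ with $g\in\text{Comm}(\Gamma)$ decomposes into only finitely many one-sided cosets (precisely because $g\Gamma g^{-1}\cap\Gamma$ has finite index in $\Gamma$) correctly converts infinite index into infinitely many double cosets. For the reverse direction you are right both that the dichotomy ``$\text{Comm}(\Gamma)$ discrete or dense'' is the starting point and that the passage from dense commensurator to an honest quaternion-algebra description is the genuinely deep step; quoting Margulis there is exactly what the paper itself does.
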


The modular polynomials (also known as Hecke correspondences) are the classical examples (when $\Gamma = PSL_2( \m Z)$). Returning to the Schwarzian equations we see that arithmetic Fuchsian groups of genus $0$ give examples of ODE's with rich binary relations.

\subsection{A touch of Model theory} \label{modelstuff} 

We end this section by saying a few words about the concepts in model theory and differential algebra that will be required in the next sections. We will then be ready to state the main results in the paper. Throughout we work in a differentially closed field of characteristic zero. 

\begin{defn}
A definable set $\mathcal{Y}$ is said to be strongly minimal if it is infinite and every definable subset is finite or co-finite.
\end{defn}

\begin{rem}\label{SMspecial} 
Let $\mathcal{Y}$  be defined by an ODE of the form $y^{(n)}=f(t,y,y',\ldots,y^{(n-1)})$, where $f$ is rational over $\mathbb{C}(t)$ (this is of course the case for the sets defined by the Schwarzian equations). Then $\mathcal{Y}$ is strongly minimal if and only if for any differential field extension $K$ of $\mathbb{C}$ and solution $y\in\mathcal{Y}$ , $\text{tr.deg.}_KK\gen{y}=0$ or $n$. 
\end{rem}

Strong minimality is fundamental to the model theoretic approach to differential algebra (cf. \cite{NagPil}). It is also closely related to the Painlev\'e notion of  irreducibility of the ODE with respect to classical functions \cite{umemura}. It turns out that there is a very general classification of strongly minimal sets in differentially closed fields about which we will say a few more words in Section \ref{trivial}. For now, we only mention the kind of strongly minimal set that is relevant for equation (\ref{stareqn}):

\begin{defn}
Let $\mathcal{Y}$ be an $F$-definable  strongly minimal  set. Then $\mathcal{Y}$ is \emph{geometrically trivial} if for any differential field extension $K$ of $F$, and for any distinct solutions $y_{1},\ldots,y_{m}$, if the collection consisting of $y_{1},\ldots,y_{m}$ together with all their derivatives $y_{i}^{(j)}$ is algebraically dependent over $K$ then for some $i<j$, $y_{i}, y_{j}$ together with their derivatives are algebraically dependent over $K$. 
\end{defn}

So geometric triviality limits the complexity of the structure of the algebraic relations on the definable set. However, given such a set, for the results which we pursue, much greater precision is required. Throughout for simplicity, we will say that an ODE is strongly minimal and geometrically trivial just in the case that its solution set is strongly minimal as a definable set. Our first theorem is the following:

\begin{Thm}\label{Geotrivial}
The Schwarzian equation $(\star)$ for the Hauptmodul $j_{\Gamma}$ of a genus $0$ Fuchsian group $\Gamma$ of first kind is strongly minimal and geometrically trivial.
\end{Thm}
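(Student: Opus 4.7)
The plan is to establish, for the solution set $\mathcal{Y}$ of $(\star)$, both strong minimality and geometric triviality. The main tools are Nishioka's Fact \ref{nishioka1}, Lemma \ref{fct1}, and a differential Galois / monodromy analysis of the second order linear equation underlying the Schwarzian (whose projective monodromy is a conjugate of $\Gamma$).

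\emph{Strong minimality.} By Remark \ref{SMspecial}, it suffices to show that for every differential field extension $K \supset \mathbb{C}$ and every solution $y \in \mathcal{Y}$, the transcendence degree $\text{tr.deg.}_K K\langle y \rangle$ lies in $\{0,3\}$. Suppose for contradiction that it equals $1$ or $2$; then $y$ satisfies a nontrivial algebraic relation $P(y,y',y'')=0$ over $K$. I would first reduce to $K$ finitely generated over $\mathbb{C}$ by passing to a minimal subfield witnessing the relation. Seidenberg's embedding theorem together with Lemma \ref{fct1} then realize $K$ as a field of meromorphic functions and let us write $y = j_\Gamma(gt)$ for some $g \in GL_2(\mathbb{C})$. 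Nishioka's Fact \ref{nishioka1} handles directly the case $K \subset \mathbb{C}(t,e^{ut})$. To reach a general $K$, I would use the fact that $(\star)$ arises as the quotient of solutions to a second order linear equation $L_\Gamma$ whose differential Galois group is controlled by the monodromy of $\Gamma$; since a genus zero Fuchsian group of the first kind is never virtually solvable, the Kolchin--Kovacic classification of algebraic subgroups of $\operatorname{SL}_2$ excludes algebraic and Liouvillian solutions to $L_\Gamma$, which would be the only possible sources of a proper intermediate algebraic relation among $y$, $y'$, $y''$.

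\emph{Geometric triviality.} Let $y_1,\dots,y_m$ be solutions whose full family of derivatives is algebraically dependent over a field $K \supset \mathbb{C}$; the goal is to show that some pair $\{y_i,y_j\}$ together with derivatives is already dependent over $K$. By Lemma \ref{fct1}, write $y_i = j_\Gamma(g_i t)$ with $g_i \in GL_2(\mathbb{C})$. The nontrivial algebraic relation is preserved by the simultaneous action of the monodromy group on the coordinates, and the strong minimality established above ensures that each $y_i$ alone contributes transcendence degree $3$ over $K$. A standard forking/geometric stability analysis in DCF$_0$ for strongly minimal sets then reduces the dependency to a $2$-variable relation; by Lemma \ref{fct1} this relation, being algebraic between two solutions of the Schwarzian, must coincide with a $\operatorname{Comm}(\Gamma)$-correspondence $\Psi_{\tilde g}$ of the form introduced earlier in the paper.

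\emph{Main obstacle.} The hardest step is ruling out intermediate transcendence degree $\text{tr.deg.}_K K\langle y \rangle = 2$ over an \emph{arbitrary} $K$, which goes beyond Nishioka's Fact \ref{nishioka1}. This is essentially the Painlev\'e irreducibility of $(\star)$ advertised in the introduction, and requires a delicate combination of the monodromy of $L_\Gamma$, the Kolchin--Kovacic classification, and the specific form of $R_{j_\Gamma}$ (the constraints on the angles $\alpha_i$ and the coefficients $A_i$) in order to close off every possible degenerate case, including the cocompact and the non-arithmetic ones simultaneously. I expect this to be the most technically involved portion of the argument.
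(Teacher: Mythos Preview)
Your strong minimality sketch is heading in the right direction but is missing the key mechanism. You correctly identify that the monodromy of the underlying second order linear equation $L_\Gamma$ is (a lift of) $\Gamma$, hence Zariski dense in $SL_2$, so $L_\Gamma$ has no Liouvillian solutions. What is not explained is the bridge: why does $\text{tr.deg.}_K K\langle y\rangle \in \{1,2\}$ for some $K$ force a Liouvillian (in fact, an algebraic Riccati) solution of $L_\Gamma$? Nishioka's Fact~\ref{nishioka1} only covers $K \subset \mathbb{C}(t,e^{ut})$ and does not help for general $K$. The paper supplies this bridge via an infinitesimal $\mathfrak{psl}_2(\mathbb{C})$-action on the differential polynomial ring: one shows that the Lie subalgebra stabilising the ideal of $y$ has dimension equal to $\text{tr.deg.}_K K\langle y\rangle$, and any proper nonzero subalgebra of $\mathfrak{psl}_2$ is conjugate into the Borel, which after the change of coordinates forces $-\frac{y''}{2(y')^2}$ to be algebraic over $\mathbb{C}(y)$ and to satisfy the Riccati equation. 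This is the actual content of the reduction and is not captured by an appeal to ``Kolchin--Kovacic classification'' alone.

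Your geometric triviality argument has a genuine gap. You write that ``a standard forking/geometric stability analysis in $DCF_0$ for strongly minimal sets then reduces the dependency to a $2$-variable relation.'' But that reduction \emph{is} the definition of geometric triviality; it does not follow from strong minimality. The constants and the Manin kernels are strongly minimal and emphatically not geometrically trivial. The paper's route is entirely different and much softer: once strong minimality is established, the Hrushovski--Sokolovi\'c trichotomy (Theorem~\ref{trichotomy}) says a strongly minimal set is either nonorthogonal to the constants, nonorthogonal to a Manin kernel, or geometrically trivial. Order $3\neq 1$ rules out the constants, and a short moduli-space argument (only countably many abelian varieties are isogenous to a given one, while the equation is defined over $\mathbb{C}$) rules out Manin kernels. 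No direct analysis of $m$-tuples of solutions is needed. Also note that your concluding claim---that the resulting $2$-variable relation must be a $\text{Comm}(\Gamma)$-correspondence---is a separate and harder statement (Theorem~\ref{2relations} and Lemmas~\ref{nonalg}, \ref{isalg}), and the paper's proof of it actually \emph{uses} geometric triviality as an input, so invoking it here would be circular.
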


We will give the proof in subsection \ref{trivial}. This result was previously only known for $PSL_2(\mathbb{Z})$ (the $j$-function see Example \ref{jfunction}) as well as for arithmetic subgroups of $PSL_2(\mathbb{Z})$ (cf. \cite{FreSca}). Our proof, which handles all Schwarzian equations of genus zero Fuchsian functions at once, also is the first which does not use o-minimality. The first proof for $PSL_2(\mathbb{Z})$ (of \cite{FreSca}) relied on the main result of \cite{Pila}, where Pila employs the same strategy from \cite{PilaAO}, relying on o-minimality and counting of points of bounded height. Later, \cite{Aslanyan} also gave a proof of the special case of $PSL_2(\mathbb{Z})$ which relied on the Ax-Schanuel type results of \cite{AXS}, where again, an o-minimal strategy was employed. 

It is worth mentioning that Painlev\'e \cite[Page 519]{pain} claimed that strong minimality (or irreducibility as he called it) would hold for the equations we consider. In \cite{NishiokaII}, Nishioka proved a very weak form of that conjecture. Nevertheless, Nishioka's paper contains techniques that inspired our own proof. 
\par We have also obtained a full description of the structure of the definable sets. One can think of these results as a weak form of the Ax-Lindemann-Weierstrass Theorem with derivatives for $\Gamma$.\footnote{The ALW statement we are pursuing allows for characterizing algebraic relations between functions which don't formally satisfy the same differential equation, but we will use to Theorems \ref{ALW} and \ref{ALW bis} to prove our most general results, which imply the pertinent version of ALW.}

\begin{Thm}\label{ALW}
Suppose that $\Gamma$ is arithmetic and suppose that $j_{\Gamma}(g_{1}t),...,j_{\Gamma}(g_{n}t)$ are distinct solutions of the Schwarzian equation $(\star)$ that are pairwise not in $\text{Comm}(\Gamma)$-correspondence. Then the $3n$ functions $j_{\Gamma}(g_{1}t),j'_{\Gamma}(g_{1}t),j''_{\Gamma}(g_{1}t),\ldots,j_{\Gamma}(g_{n}t),j'_{\Gamma}(g_{n}t),j''_{\Gamma}(g_{n}t)$ are algebraically independent over ${\mathbb C}(t)$. 
\end{Thm}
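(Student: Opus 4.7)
\emph{Plan.} I will deduce Theorem \ref{ALW} from Theorem \ref{Geotrivial} by first reducing algebraic dependence of the full $3n$-tuple to algebraic dependence of some pair, and then showing that pairwise algebraic dependence between two distinct Schwarzian solutions already forces them to lie in a $\text{Comm}(\Gamma)$-correspondence, contradicting the hypothesis. The first reduction is essentially formal once we have Theorem \ref{Geotrivial}. The second step -- characterizing pairwise algebraic relations between solutions of $(\star)$ -- is the substantive geometric/analytic content, and the main obstacle.

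\medskip

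\emph{Reduction to pairs.} Suppose for contradiction that the $3n$ functions are algebraically dependent over $K := \mathbb{C}(t)$, and write $y_k = j_\Gamma(g_k t)$. Each $y_k$ is a nonconstant meromorphic function satisfying the third-order equation $(\star)$, so by Remark \ref{SMspecial} combined with the strong minimality half of Theorem \ref{Geotrivial}, $\trdeg_K K\langle y_k\rangle = 3$ for every $k$. Geometric triviality, applied to the distinct solutions $y_1,\ldots,y_n$, then yields indices $i \neq j$ such that $y_i, y_i', y_i'', y_j, y_j', y_j''$ are algebraically dependent over $K$. Combined with strong minimality, this in turn forces $y_j$ to lie in the field-theoretic algebraic closure of $K\langle y_i\rangle$: the strongly minimal trichotomy gives $\trdeg_{K\langle y_i\rangle} K\langle y_i, y_j\rangle \in \{0,3\}$, and the assumed dependence rules out the value $3$.

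\medskip

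\emph{From interalgebraicity to $\text{Comm}(\Gamma)$-correspondence.} The substantive step is to show that if $y_i = j_\Gamma(g_i t)$ and $y_j = j_\Gamma(g_j t)$ are interalgebraic over $\mathbb{C}(t)$, then they lie in a $\text{Comm}(\Gamma)$-correspondence. Setting $s = g_i t$ and $h = g_j g_i^{-1}$, this reduces to: an algebraic dependence between $j_\Gamma(s)$ and $j_\Gamma(h s)$ forces $h \in \text{Comm}(\Gamma)$. My approach is to first extract a genuine two-variable polynomial relation $P(j_\Gamma(s), j_\Gamma(h s)) = 0$ with $P \in \mathbb{C}[X,Y]$ (no derivatives, no $s$) from the abstract interalgebraicity, using that $(\star)$ is autonomous (no explicit $t$) and that the differential Galois group of the associated linear equation encodes the monodromy $\Gamma$; the minimal polynomial witnessing interalgebraicity should descend, via the $\Gamma$-action on pairs of solutions, to a polynomial on $X(\Gamma)\times X(\Gamma)$. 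I would then invoke the Shimura--Mochizuki classification of algebraic correspondences on $\Gamma\backslash\mathbb{H}$ (reviewed in Section \ref{basic}): any such correspondence is of the form $X(\Gamma g \Gamma)$ for some $g \in \text{Comm}(\Gamma)$, so the mere existence of $P$ already produces the required element. The main obstacle is making the descent from a relation in the differential field to this two-variable polynomial rigorous; here the arithmeticity of $\Gamma$ plays a subtle role, because by Margulis' theorem $\text{Comm}(\Gamma)$ is strictly -- indeed infinitely -- larger than $\Gamma$, and so the family of allowed correspondences is genuinely rich and must be characterized exhaustively, not merely excluded.
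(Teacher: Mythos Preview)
Your overall architecture matches the paper's: reduce to pairs via Theorem \ref{Geotrivial}, then show that interalgebraicity of two solutions forces a $\text{Comm}(\Gamma)$-correspondence. The first reduction is carried out exactly as the paper does it (in the proof of Theorem \ref{ALW1}).

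The substantive gap is precisely where you place it, and your sketch for closing it does not work as stated. The descent from ``$y_j$ is algebraic over $\mathbb{C}(t)\langle y_i\rangle$'' to ``there exists $P\in\mathbb{C}[X,Y]$ with $P(y_i,y_j)=0$'' is the content of Theorem \ref{2relations}, and its proof is not a matter of $\Gamma$-invariance or monodromy of the linear equation. The paper instead exploits the full $PSL_2(\mathbb{C})$-action by precomposition on solutions of $(\star)$: one verticalizes the infinitesimal $\mathfrak{psl}_2$-action so that it commutes with $D$, compares the two copies of $\mathfrak{psl}_2$ coming from $y_1$ and $y_2$ via an inner automorphism, and then runs a Puiseux-series computation (Lemma \ref{Hy}) to force the conjugating element to be trivial. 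Only then does the common kernel of $X,H,Y$ pin down a relation in $\mathbb{C}[y_1,y_2]$. Your proposed route through the $\Gamma$-action on pairs does not give this: $\Gamma$ is discrete and acts on $t$, not on the ideal of differential relations in a way that eliminates $t$ and the derivatives. Autonomy of $(\star)$ is indeed relevant, but it enters through the $PSL_2(\mathbb{C})$-symmetry, not through the monodromy group of the associated second-order linear equation.

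Two smaller points. First, once $P\in\mathbb{C}[X,Y]$ is in hand, you propose to invoke a Shimura--Mochizuki classification of correspondences; the paper instead proves the needed direction directly and elementarily (Lemma \ref{nonalg}): if $g\notin\text{Comm}(\Gamma)$ then $\Gamma g\Gamma$ contains infinitely many left $\Gamma$-cosets, so $P(j_\Gamma(a),\cdot)$ would have infinitely many zeros. Second, arithmeticity of $\Gamma$ plays no role in the argument: the paper proves the single Theorem \ref{ALW1} covering both cases, and Theorems \ref{ALW} and \ref{ALW bis} are just the two ways of packaging its conclusion. Your remark that arithmeticity makes the correspondence family ``rich and must be characterized exhaustively'' is a comment on the shape of the statement, not on the proof.
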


\begin{Thm}\label{ALW bis}
Suppose that $\Gamma$ is non-arithmetic. Then there is a $k\in\mathbb{N}$ such that if $j_{\Gamma}(g_{1}t),...,j_{\Gamma}(g_{n}t)$ are distinct solutions of the Schwarzian equation $(\star)$ satisfying 
\[\text{tr.deg.}_{\mathbb{C}(t)}\mathbb{C}\gen{t,j_{\Gamma}(g_{1}t)\ldots,j_{\Gamma}(g_{n}t)}=3n,\]
then for all other solutions $j_{\Gamma}(gt)$, except for at most $n\cdot k$, 
\[\text{tr.deg.}_{\mathbb{C}(t)}\mathbb{C}\gen{t,j_{\Gamma}(g_{1}t)\ldots,j_{\Gamma}(g_{n}t),j_{\Gamma}(gt)}=3(n+1).\]
\end{Thm}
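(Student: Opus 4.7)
The plan is to combine the strong minimality and geometric triviality of the Schwarzian equation (established in Theorem~\ref{Geotrivial}) with Margulis's theorem on commensurators in a two-step reduction. First, suppose $j_{\Gamma}(gt)$ is a new solution of (\ref{stareqn}) for which the extended transcendence degree drops below $3(n+1)$. Since by hypothesis the $n$ given solutions together with their derivatives form an algebraically independent set of size $3n$ over $\mathbb{C}(t)$, and since geometric triviality forces any algebraic dependence among a collection of solutions of (\ref{stareqn}) (together with their derivatives) to already be witnessed by some pair, there must exist some $i \in \{1,\ldots,n\}$ such that the six functions $j_{\Gamma}(g_i t)$, $j'_{\Gamma}(g_i t)$, $j''_{\Gamma}(g_i t)$, $j_{\Gamma}(gt)$, $j'_{\Gamma}(gt)$, $j''_{\Gamma}(gt)$ are algebraically dependent over $\mathbb{C}(t)$.

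Next, I would show that such a pairwise algebraic dependence forces $j_{\Gamma}(gt)$ and $j_{\Gamma}(g_i t)$ to lie on a $\text{Comm}(\Gamma)$-correspondence, i.e., $\Psi_{\tilde{h}}(j_{\Gamma}(gt), j_{\Gamma}(g_i t))=0$ for some double coset $\Gamma h \Gamma \subset \text{Comm}(\Gamma)$. This amounts to classifying non-orthogonality between two copies of the strongly minimal set cut out by (\ref{stareqn}): any binary algebraic relation between two solutions of the form $j_{\Gamma}(at)$, $j_{\Gamma}(bt)$ (with $a,b\in GL_2(\mathbb{C})$) should encode an algebraic symmetry of the equation, and by the discussion in Section~2 (following \cite{Mochi,Shim}), the only such symmetries among $GL_2(\mathbb{C})$-translate solutions are precisely the $\text{Comm}(\Gamma)$-correspondences $X(\Gamma h \Gamma)$.

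Finally, since $\Gamma$ is non-arithmetic, Margulis's theorem gives $[\text{Comm}(\Gamma):\Gamma] < \infty$, so the collection of $\text{Comm}(\Gamma)$-correspondences is finite; let $k$ denote its cardinality. For each fixed $i$, the condition that $j_{\Gamma}(gt)$ lie on a $\text{Comm}(\Gamma)$-correspondence with $j_{\Gamma}(g_i t)$ determines $j_{\Gamma}(gt)$ among at most $k$ distinct solutions (since each double coset determines the partner solution uniquely once $j_{\Gamma}(g_i t)$ is fixed). Summing over $i = 1, \ldots, n$ gives the claimed bound of $n\cdot k$ exceptional solutions. The main obstacle is the second step: establishing that pairwise algebraic dependence between solutions of (\ref{stareqn}) must come from a $\text{Comm}(\Gamma)$-correspondence. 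This is a non-trivial structural result about the strongly minimal set defined by the Schwarzian, and will likely draw on the paper's differential-Galois-theoretic and monodromy analysis, together with the techniques of Nishioka \cite{NishiokaII} on Painlev\'e irreducibility.
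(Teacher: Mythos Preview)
Your proposal is correct and follows essentially the same route as the paper. The paper assembles Theorem~\ref{Geotrivial} (strong minimality and geometric triviality), Theorem~\ref{2relations} (any pairwise dependence descends to a polynomial relation $P(j_1,j_2)=0$ over $\mathbb{C}$), and Lemmas~\ref{nonalg} and~\ref{isalg} (such a relation exists iff the relevant $GL_2$-element lies in $\text{Comm}(\Gamma)$), then invokes Margulis's theorem for the finiteness of $[\text{Comm}(\Gamma):\Gamma]$ in the non-arithmetic case; this is exactly your three-step reduction, and you have correctly identified the second step as the substantive one.

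One small correction to your counting in the final step: it is not true that a single double coset $\Gamma h \Gamma$ determines the partner solution \emph{uniquely} once $j_\Gamma(g_i t)$ is fixed. The curve $\Psi_{\tilde h}(X,Y)=0$ has $Y$-degree equal to $|\Gamma\backslash \Gamma h\Gamma|$, and if $\Gamma h\Gamma=\bigcup_j \Gamma h_j$ is the decomposition into left cosets, then the roots of $\Psi_{\tilde h}(j_\Gamma(g_i t),Y)=0$ are precisely the functions $j_\Gamma(h_j g_i t)$, all of which are solutions of (\ref{stareqn}). So the correct value of $k$ is the index $[\text{Comm}(\Gamma):\Gamma]$ (the number of left $\Gamma$-cosets in $\text{Comm}(\Gamma)$), not the number of double cosets. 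This does not affect the argument, since Margulis's theorem gives finiteness of the index directly.
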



So, by the previous two theorems, we have that the set defined by the Schwarzian equation $(\star)$ is $\aleph_0$-categorical if and only if the group $\Gamma$ is non-arithmetic. It was a long-standing open problem in the model theory of differential fields (recently resolved by \cite{FreSca}) to find a non-$\aleph_0$-categorical geometrically trivial strongly minimal set; the non-existence of such sets was part of a strategy for certain diophantine problems suggested by Hrushovski \cite[see page 292]{HruICM}. Theorem \ref{ALW} gives many new examples of geometrically trivial non-$\aleph _0 $-categorical equations, and together with Theorem \ref{ALW bis} also provides an interesting connection between categoricity and arithmetic groups. We view the following question as the next major challenge in the classification of geometrically trivial strongly minimal sets in differentially closed fields:

\begin{ques} 
In $DCF_0$, are there non-$\aleph_0$-categorical strongly minimal sets that do not arise from arithmetic Fuchsian groups?\footnote{Later in the paper, it will be clear to model theorists that by ``arise from" arithmetic Fuchsian groups, we mean "are non-orthogonal to the differential equation (\ref{stareqn}) or one of its other fibers". An answer to the question is of interest in part because if there were a strong classification of the geometrically trivial strongly minimal sets in differential fields, some of the strategy laid out in \cite{HruICM} for certain diophantine problems might be possible.}
\end{ques} 

Finally let us talk about the full Ax-Lindemann-Weierstrass Theorem with derivatives for $\Gamma$. We closely follow the description of the problem as in \cite{Pila}.  Let $V\subset \m A^{n}$ be an irreducible algebraic variety defined over $\m C$ 
such that $V(\m C)\cap\m H^n\neq\emptyset$ and $V$ projects dominantly to each of its coordinates (each coordinate function is nonconstant). Let $t_1,\ldots,t_n$ be the functions on $V$ induced by the canonical coordinate functions on $\m A^{n}$. We say that $t_1,\ldots,t_n$ are {\it $\Gamma$-geodesically independent} if there are no
relations of the form
\[t_i=g t_j\]
where $i\neq j$ and $g\in \text{Comm}(\Gamma)$ acts by fractional linear transformations.

\begin{Thm}\label{fullALW} 
With the notation (and assumption $V(\m C)\cap\m H^n\neq\emptyset$) as above, suppose that $t_1,\ldots,t_n$ are $\Gamma$-geodesically independent. Then the 3n functions
\[j_{\Gamma}(t_1),j'_{\Gamma}(t_1),j''_{\Gamma}(t_1)\ldots,j_{\Gamma}(t_n),j'_{\Gamma}(t_n),j''_{\Gamma}(t_n)\]
(defined locally) on $V(\m C)$ are algebraically independent over
$\m C(V)$.
\end{Thm}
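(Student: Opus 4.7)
The plan is to reduce Theorem \ref{fullALW} to Theorems \ref{ALW} and \ref{ALW bis}, which handle the special case where the arguments of $j_\Gamma$ are Möbius functions of a single variable. I would assume for contradiction that the $3n$ functions are algebraically dependent over $\mathbb{C}(V)$. Since $\Gamma$-geodesic independence and nonconstancy of the $t_i$'s are open conditions, I would first replace $V$ by a sufficiently generic smooth irreducible curve $C \subset V$ through $P$ on which the restrictions $t_i|_C$ remain nonconstant and pairwise geodesically independent; the alleged algebraic dependence persists over $\mathbb{C}(C)$. Fixing a local holomorphic parameter $s$ on $C$ at $P$, I would write $t_i = \phi_i(s)$ as algebraic functions of $s$ taking values in $\mathbb{H}$ near $s = 0$ and set $y_i := j_\Gamma(\phi_i(s))$. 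The composition rule for the Schwarzian derivative then gives
\[
S_{d/ds}(y_i) + \left(\tfrac{dy_i}{ds}\right)^{2} R_{j_\Gamma}(y_i) = S_{d/ds}(\phi_i(s)),
\]
with inhomogeneity in $K := \mathbb{C}(s)^{\mathrm{alg}}$, so each $y_i$ satisfies an inhomogeneous Schwarzian equation over $K$ rather than the pure equation $(\star)$.

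The heart of the proof is to transfer the algebraic dependence from this inhomogeneous setting back to one where the $y_i$'s are honest points of the strongly minimal set defined by $(\star)$. Using Theorem \ref{Geotrivial} (strong minimality and geometric triviality of $(\star)$) together with Lemma \ref{fct1}, I would work inside an appropriate differential extension in which each $y_i$ becomes expressible as $j_\Gamma(\tilde g_i \tilde t)$ for some $\tilde g_i \in GL_2(\mathbb{C})$ and a common variable $\tilde t$, with the $\tilde g_i$'s encoding both the data of the $\phi_i$'s and the change of variable. Geometric triviality would reduce the hypothetical global algebraic dependence to a pairwise one, and Theorem \ref{ALW} (when $\Gamma$ is arithmetic) or Theorem \ref{ALW bis} applied inductively (when $\Gamma$ is non-arithmetic, first establishing independence of generic solutions and discarding the exceptional finite set) would force the two relevant arguments to be in $\mathrm{Comm}(\Gamma)$-correspondence. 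Tracing the resulting relation back through the reduction would produce $t_i = \gamma t_j$ on $V$ with $\gamma \in \mathrm{Comm}(\Gamma)$, contradicting $\Gamma$-geodesic independence.

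The hard part will be carrying out the bridging step just described. The passage from the inhomogeneous Schwarzian to the homogeneous $(\star)$ — equivalently, from arguments that are general algebraic functions of $s$ to Möbius transformations of a common variable — cannot be done by a literal substitution $\tilde t = \tilde t(s)$, since no single such substitution can simultaneously linearize all the $\phi_i$'s. Instead, the differential-algebraic and model-theoretic machinery of this paper — strong minimality, geometric triviality, and the characterization of non-orthogonality among fibers of $(\star)$ via $\mathrm{Comm}(\Gamma)$-correspondences — must be leveraged to reduce the situation to the Möbius case inside a suitably saturated differentially closed field. This is precisely the ingredient that replaces the o-minimal counting techniques used in previous proofs of ALW statements with derivatives.
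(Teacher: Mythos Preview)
Your reduction to a curve and the recognition that each $y_i = j_\Gamma(\phi_i(s))$ satisfies an \emph{inhomogeneous} Schwarzian equation $\chi_{\Gamma,\,d/ds}(y_i)=a_i$ with $a_i=S_{d/ds}(\phi_i)\in K$ is exactly right, and this is also where the paper's proof begins (via Lemma \ref{nonzeroder}, which plays the role of your curve reduction). The gap is in your ``bridging step.'' You propose to reduce to Theorems \ref{ALW} and \ref{ALW bis}, which concern only the zero fiber $\chi_\Gamma(y)=0$, and you invoke Lemma \ref{fct1} to write all the $y_i$ as $j_\Gamma(\tilde g_i\tilde t)$ for a \emph{common} $\tilde t$. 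But Lemma \ref{fct1} applies only to solutions of the homogeneous equation $(\star)$; the $y_i$ here lie in generally distinct fibers $a_i$, and (as you yourself note) no single substitution $\tilde t=\tilde t(s)$ can send all of them to the zero fiber simultaneously. Your last paragraph gestures at ``non-orthogonality among fibers,'' but the results you cite (Theorem \ref{Geotrivial}, Theorems \ref{ALW}/\ref{ALW bis}, Lemma \ref{fct1}) say nothing about non-zero fibers or about relations between different fibers, so the bridging step has no content as written.

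What the paper actually does at this juncture is prove two further results that you do not mention. Theorem \ref{Strminfiber} shows that \emph{every} fiber $\chi_\Gamma(y)=a$ is strongly minimal and geometrically trivial, with all internal relations given by $\Gamma$-special polynomials; this is obtained by changing the derivation (Lemma \ref{varchangelem}) to move one fiber at a time to the zero fiber and then invoking Theorem \ref{ALW1}. Theorem \ref{fiberfullstatement} then proves that distinct fibers $\chi_\Gamma(y)=a$ and $\chi_\Gamma(y)=b$ with $a\neq b$ are \emph{orthogonal}; this is a genuinely new argument using Schwarz's lemma on maximal domains of algebraicity. With these in hand the proof is short: a dependence among the $y_i$ forces non-orthogonality between two fibers, hence $a_i=a_j$, i.e.\ $S_\delta(t_i)=S_\delta(t_j)$, so $t_i=gt_j$ for some M\"obius $g$; the relation within that common fiber is then $\Gamma$-special, forcing $g\in\mathrm{Comm}(\Gamma)$. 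The orthogonality of distinct fibers is the missing idea, and it is not a consequence of the zero-fiber Theorems \ref{ALW} and \ref{ALW bis}.
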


We will prove Theorem \ref{fullALW} in section \ref{OrthALW}. Pila \cite{Pila} had already proved the result for $PSL_2(\mathbb{Z})$ (see also \cite{FreSca} where the same is established for arithmetic subgroups of $PSL_2(\mathbb{Z})$).

\section{A criterion for Strong minimality of a general Fuchsian equation}

We now aim to give a criterion that can be used to show that the Schwarzian equation $(\star)$ is strongly minimal. This criterion is applicable to Schwarzian equations in the general sense, namely to any equation of the form
\begin{equation}\tag{$\star'$}
S_{\frac{d}{dt}}(y) +(y')^2\cdot R(y) =0\label{(*)}.
\end{equation}
So here we do not assume the rational function $R$ to necessarily correspond to some Hauptmodul. We only require that $R$ is rational over $\mathbb{C}$. By the Riccati equation attached to $(\star')$ we mean the equation
\begin{equation}\tag{$\star\star$}
\frac{du}{dy}+u^2+\frac{1}{2}R(y) =0.
\end{equation}

\begin{cond}\label{Ric}
The Riccati equation ($\star\star$) has no solution in $\mathbb{C}(y)^{alg}$.
\end{cond}

\begin{Thm} \label{juststrmin}
Let $(K,\partial)$ be any differential field extension of $\mathbb{C}$ and let us assume that Condition \ref{Ric} holds. If $j_R$ is a solution of the Schwarzian equation $(\star')$ we have that
\[\text{tr.deg.}_KK\gen{j_R}=0\text{ or }3.\] In other words, if Condition \ref{Ric} holds, then equation $(\star')$ is strongly minimal.
\end{Thm}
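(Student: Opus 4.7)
I would prove the contrapositive: supposing there exist $K\supseteq\mathbb{C}$ and a solution $y$ of $(\star')$ with $d:=\text{tr.deg.}_K K\langle y\rangle\in\{1,2\}$, I will produce an algebraic solution of the Riccati $(\star\star)$ in $\mathbb{C}(y)^{\text{alg}}$, contradicting Condition \ref{Ric}. The bridge is the substitution $u:=y''/(2(y')^2)$, for which a direct manipulation of the Schwarzian yields the identity
\[ S_{\frac{d}{dt}}(y)+(y')^2R(y) \;=\; 2(y')^2\left(\frac{du}{dy}+u^2+\frac{R(y)}{2}\right),\]
so that $y$ solves $(\star')$ iff $u$ solves the Riccati in the variable $y$; equivalently, $u'=-y'(u^2+R(y)/2)$ together with $y''=2u(y')^2$ in the derivation $d/dt$.

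In the case $d=1$, $y'$ is algebraic over $K(y)$; writing $y'=f(y)$ gives $u=f_y/(2f)\in K(y)^{\text{alg}}$ at once. In the case $d=2$, $u\in K(y,y')^{\text{alg}}$ and I would take the irreducible minimal polynomial $Q(\xi,U)\in K(y)[\xi,U]$ of $u$ over $K(y)(y')$ (with $\xi$ a formal variable standing for $y'$). Differentiating $Q(y',u)=0$ in $t$ and substituting for $u'$ and $y''$ via the identities above yields, after dividing by $y'\neq 0$, an invariance relation in $K(y)[\xi,U]$:
\[ Q_y(\xi,U) + 2U\xi\, Q_\xi(\xi,U) - \left(U^2+\tfrac{1}{2}R(y)\right) Q_U(\xi,U) \;\equiv\; 0 \pmod{Q(\xi,U)}.\]
A degree count forces the quotient to be of the form $a(y)+b(y)U$. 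Examining the top $\xi$-coefficient $Q^{(m)}(y,U)$ of $Q$ produces a first-order linear PDE on $Q^{(m)}$ whose characteristic curves are exactly the trajectories of the Riccati vector field $\partial_y-(U^2+R(y)/2)\partial_U$. Hence the zero locus of $Q^{(m)}$, when it genuinely depends on $U$, is an algebraic curve swept out by Riccati trajectories; its algebraic branches $U=u_k(y)$ are then algebraic solutions of the Riccati over $\mathbb{C}(y)^{\text{alg}}$, contradicting Condition \ref{Ric}. Thus $Q$ is forced to be independent of $\xi$, and the same characteristic-analysis applied to the minimal polynomial of $u\in K(y)^{\text{alg}}$ over $K(y)$ delivers the desired algebraic Riccati solution (the case $d=1$ is handled identically from $u=f_y/(2f)$).

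The main obstacle I anticipate is the reduction from general $K$ to $K=\mathbb{C}$ so that the PDE analysis above applies cleanly. When $K$ has nontrivial differential structure over $\mathbb{C}$, the action of $d/dt$ on coefficients of $Q$ adds terms of the form $\sum(\partial c_{ij})(y)\xi^iU^j$ to the invariance identity, potentially obscuring the Riccati characteristic. The natural way to handle this is a model-theoretic compactness/saturation argument in $DCF_0$: strong minimality and Condition \ref{Ric} are first-order absolute, so a failure of strong minimality over some $K$ transfers to a failure over a finitely generated extension, and ultimately to $K=\mathbb{C}$. Alternatively, one can interpret the Riccati as the logarithmic-derivative form of the linear ODE $\eta_{yy}+\frac{1}{2}R(y)\eta=0$ over $\mathbb{C}(y)$: by Kovacic's classification, Condition \ref{Ric} is equivalent to its Picard-Vessiot group being $SL_2(\mathbb{C})$, and one argues (via linear disjointness of the Picard-Vessiot extension from $K(y)$ over $\mathbb{C}(y)$) that any Galois-theoretic invariant line arising over $K(y)$ descends to an algebraic Riccati solution over $\mathbb{C}(y)^{\text{alg}}$.
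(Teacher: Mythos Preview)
Your bridge $u=y''/(2(y')^2)$ and the identity linking $(\star')$ to $(\star\star)$ are correct, and this is indeed the element the paper uses (up to a factor of $-2$). The genuine gap is the descent from $K(y)^{\mathrm{alg}}$ to $\mathbb{C}(y)^{\mathrm{alg}}$, and neither of your proposed fixes closes it. In case $d=1$ your formula $u=f_y/(2f)$ is already wrong: with $y'=f\in K(y)^{\mathrm{alg}}$, the $t$-derivative of $f$ picks up the $K$-derivation, so $y''=f\,f_y+\partial_K f$ and hence $u=f_y/(2f)+\partial_K f/(2f^2)$; the extra term vanishes precisely when $f\in\mathbb{C}(y)^{\mathrm{alg}}$, which is what you want to prove. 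Your model-theoretic transfer is not valid either: a failure of strong minimality over some $K$ does \emph{not} descend to $K=\mathbb{C}$; the equation is defined over constants, but the witnessing subvariety can genuinely require non-constant parameters, and there is no specialization of such parameters to $\mathbb{C}$ compatible with the differential structure. Finally, the Picard--Vessiot/linear-disjointness idea addresses the wrong equation: linear disjointness would rule out algebraic solutions of the $d/dy$-Riccati over $K(y)$ (with $K$ viewed as $d/dy$-constants), but your $u$ only satisfies $\partial u/y'+u^2+R/2=0$ for the original derivation $\partial$, which differs from $D_yu+u^2+R/2=0$ by the nonzero term $\partial_K u/y'$.

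The paper supplies exactly the missing idea: the $PSL_2(\mathbb{C})$-symmetry of the Schwarzian by precomposition. One writes the infinitesimal $\mathfrak{psl}_2$-action as three explicit vector fields $X,H,Y$ on the jet ring $L=K[y,1/P(y),y',1/y',y'']$, checks the bracket relations $[D,X]=0$, $[D,H]=D$, $[D,Y]=tD$, and proves that the Lie subalgebra $\mathfrak{b}\subset\mathfrak{psl}_2$ stabilizing the ideal of $j_R$ over $K$ has dimension equal to $\mathrm{tr.deg.}_K K\langle j_R\rangle\in\{1,2\}$. Any such $\mathfrak{b}$ sits in a Borel, and after a single $PSL_2$-conjugation one may take $\mathfrak{b}\subset\langle X,H\rangle$. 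The key computation is then that $-y''/(y')^2$ is annihilated by both $X$ and $H$ already in $L$; hence its image in $L/I$ lies in the algebraic closure of $\mathbb{C}[y,1/P(y)]$ there, not merely of $K[y]$. That is the descent you were missing, and it immediately gives an algebraic solution of $(\star\star)$ over $\mathbb{C}(y)^{\mathrm{alg}}$, contradicting Condition~\ref{Ric}. In short: your Riccati substitution is right, but the proof really hinges on the $PSL_2$-invariance of the Schwarzian, which is what forces $u$ down to $\mathbb{C}(y)^{\mathrm{alg}}$.
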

\begin{proof}
For contradiction, assume that there is a finitely generated differential field extension $F$ of $\mathbb C$ that witnesses non-strong minimality of the equation $(\star')$ (i.e., an order 1 or 2 $F$-differential subvariety exists). Throughout, we write $K=F(t)$ and let $j_R$ be a solution of the Schwarzian equation $(\star')$ such that $\text{tr.deg.}_KK\gen{j_R}=1\text{ or }2$, respectively.\footnote{If $f$ is a solution of equation $(\star ')$ generating a differential field extension of $F$ of transcendence degree one or two, taking $j_R$ to be a realization of a non-forking extension of the type of $f$ over $F$ to the field $K=F(t)$ gives such a solution $j_R$ of $(\star ')$.} 

Furthermore, using Seidenberg's embedding theorem we can assume that $K$ is a subfield of $\mathscr M (U)$ the field of meromorphic functions on an open domain $U \subset \mathbb C$ and that $j_R \in \mathscr M (U)$.

Let $P\in\mathbb{C}[y]$ be a denominator of the rational function $R(y)$. Let $L =K[y, \frac{1}{P(y)},y', \frac{1}{y'},y'']$ be the polynomial ring equipped with the derivation
\begin{itemize}
\item $D =  \partial + y' \frac{\partial}{\partial y} + y'' \frac{\partial}{\partial y'} +  \left( \frac{3}{2}\frac{y''^2}{y'} -(y')^3 R(y) \right) \frac{\partial}{\partial y''}$
\end{itemize}
making $L$ a universal $(K,\partial)$-algebra generated by a non constant solution of the Schwarzian equation.
One also defines an action of $\mathfrak{psl}_2(\m C)$ by : 

\begin{itemize}
\item $ X = \partial$
\item $ H = t\partial - y'\frac{\partial}{\partial y'} -2 y'' \frac{\partial}{\partial y''}$
\item $ Y = \frac{t^2}{2}\partial -ty'\frac{\partial}{\partial y'} - (2ty'' +y')\frac{\partial}{\partial y''}$
\end{itemize}

It is easily verified that $[X, H] =  X$, $[H, Y]= Y$, $[X,  Y] = H$ (the basis $\tilde X =X$, $\tilde Y =2Y$ and $\tilde H = 2H$ is a Chevalley basis, {\it i.e.,} satisfying $[\tilde X, \tilde H] =  2\tilde X$, $[\tilde H, \tilde Y]= 2 \tilde Y$, $[\tilde X,  \tilde Y] = \tilde H$).
Furthermore, the equalities $[D, X] = 0$, $[D,  H] = D$, $[D, Y] = t D$ can be easily verified.

When $K = \mathbb{C}(t)$, the algebraic group $PSL_2(\m C)$ acts on $L$ by
\begin{equation}\label{homography}
h (t,y,y',y'') = \left( \underline{h}(t), y, \frac{y'}{\underline{h}'(t)}, \frac{y''}{(\underline{h}'(t))^2} - y' \frac{\underline{h}''(t)}{(\underline{h}'(t))^3}\right)
\end{equation}
where $\underline{h}$ denotes the homography of the projective line associated to an element $h$ of $PSL_2(\m C)$. For $F \in L$, one defines $(h)^\ast D \cdot F = h \circ D \circ h^{-1}(F)$ . Direct computations give that $(h)^\ast D = \frac{1}{\underline{h}'(t)}D$. 
This equality means that the set of meromorphic solutions of a Schwarzian equation is stable by the action of $PSL_2(\CC)$ by precomposition. The previously given action of $\mathfrak{psl}_2(\mathbb C)$ is the infinitesimal action of $PSL_2(\m C)$. 

When $K \subset \mathscr M(U)$ then a fixed element $h \in PSL_2(\mathbb{C})$ maps $L$ onto an isomorphic subfield in $\mathscr{M}(h^{-1}(U))$ but the whole group does not act on $L$. Now let $I \subset L$ be the annihilator of the solution $j_R$ and $Z$ be the zero locus of $I \cap \mathscr O(U)[y,y',y'']$ in $U \times \mathbb C^3$, where $\mathscr O(U)$ is the ring of holomorphic functions on $U$. We have that $Z$ is an analytic variety, affine over $U$, and that its $K$-fibers are algebraic varieties over $K$. We have the following lemma

\begin{lem}
The dimension of the subalgebra $\mathfrak b$ of $\mathfrak{psl}_2(\m C)$ stabilizing $I$ equals the dimension of $Z$ over $K$.
\end{lem}
 
 \begin{proof}
 Let  $p \in Z$ be a smooth point in the graph of  $(j_R,j'_R, j''_R)$. Then the evaluation of $X$, $H$, $Y$ and $D$ at $p$ give a basis of $T_p(  U \times \mathbb C^3)$. If $v$ is in $T_p(Z) \subset T_p(  U \times \mathbb C^3)$ then there exists $V \in \mathfrak{psl}_2 + \mathbb C D$ whose value at $p$ is $v$. 
 
We first show that $V\cdot I \subset I$. To see this, notice that for $P \in I$ we have that $D\cdot (V\cdot P) = V\cdot (D\cdot P) +f D\cdot P$, for some $f\in\mathbb{C}(t)$. Here we use the equalities $[D, X] = 0$, $[D,  H] = D$, $[D, Y] = t D$. Since by definition $D\cdot P \in I$, we have that the ideal $J$ generated by $I$ and $V\cdot I$ is stable by $D$. Moreover all elements of $J$ vanish at $p$ thus on the whole graph of $(j_R,j'_R, j''_R)$. By maximality $V\cdot I \subset I$.

The stabilizer of $I$ in $\mathbb C X + \mathbb C H + \mathbb C Y + \mathbb C D$ has the dimension of $T_pZ$ and thus that of $Z$. Because $D$ is tangent to $Z$, the dimension of $Z$ over $K$ is the dimension of the stabilizer of $I$ in $\mathfrak{psl}_2$.
 \end{proof}
Our assumption $\text{\rm tr.deg}_KK\gen{j_R}=1\text{ or }2$ gives that the stabilizer, denoted by $\mathfrak b$, is a non-trivial proper subalgebra of $\mathfrak{psl}_2(\m C)$. Every such a proper subalgebra is contained in a $2$-dimensional Lie subalgebra of $\mathfrak{psl_2}=\mathfrak{sl_2}$. Furthermore, the group $PSL_2(\mathbb C)$ acts on $\mathfrak{psl_2}$ by the adjoint representation and under this action all Lie subagebras of $\mathfrak{psl_2}$ of dimension 2 are conjugate to one another (cf. \cite[Section 16]{Hump}).
 
 Let $g \in PSL_2(\mathbb C)$ be an element conjugating a dimension two subalgebra containing $\mathfrak{b}$ to the algebra generated by $X$ and $H$. Then $g$ acts as an homography on ${\bf P}^1(\mathbb{C})$ and transforms $K\subset \mathscr M (U)$ to $K^g\subset \mathscr M (g^{-1}(U))$.
 
The induced isomorphism of $L$ to $L^g =K^g[y, \frac{1}{P(y)},y',\frac{1}{y'},y'']$ sending $y$ to $y$, $y'$ to $y'g'(t)$ and $y''$ to $y''g'(t)^2 - y'g''(t)$ preserves $D$ up to multiplication by an element of $K$ (see equation \ref{homography}) and induced the adjoint action on $\mathfrak{psl_2}$. The transcendence degree of $j_R$ over $K$ is the transcendence degree of $j_R\circ g$ over $K^g$ but now we have ensured that the stabiliser is included in the Lie algebra generated by $X$ and $H$. Let us forget that we change the field and assume $\mathfrak b$ is included in the triangular Borel subalgebra, that is in the Lie algebra generated by $X$ and $H$.
 
 In $L$, we have that $-\frac{y''}{y'^2}$ vanishes when we apply the induced $X$ and $H$. So the image of $-\frac{y''}{y'^2}$ in $L/I$ belongs to the kernel of the action of $\mathfrak b$: namely, the algebraic closure of $\mathbb C [y,\frac{1}{P(y)}]$ in $L/I$. Let $z$ be this algebraic function.
 Direct computation shows that in $L/I$, $-\frac{y''}{y'^2}$ satisfies the following re-writing of equation $(\star')$
 
 $$
( -\frac{y''}{y'^2})' \frac{1}{y'}+ \frac{1}{2}(-\frac{y''}{y'^2})^2 + R(y) =0
 $$
 meaning that $\frac{z}{2}$ is an algebraic solution of
 $$
 \frac{d u}{dy} + u^2 +\frac{1}{2} R(y) =0
 $$
 in $\mathbb{C}(y)^{alg}$. This contradicts  Condition \ref{Ric}.
 \end{proof}
The next section is devoted to proving that Condition \ref{Ric} holds for Equation (\ref{stareqn}).

\section{The General Proof of Strong minimality} \label{Liouville}

\subsection{Liouvillian solutions, algebraic solutions, and Picard-Vessiot theory}


\begin{defn} Fix a differential field $K$ extending $\m C(y)$ such that the derivation on $K$ extends $\frac{d}{dy}$. We say that $K$ is \emph{Liouvillian} if there is a tower of field extensions $\m C(y) \subset K_0 \subset K_1 \subset \ldots \subset K_n = K$ such that for each $i=1, \ldots , n$, $K_i/K_{i-1}$ is generated by an element $a_i$ such that one of the following holds: 
\begin{enumerate}
\item $a_i ' \in K_{i-1}.$ 
\item $\frac{a_i'}{a_i} \in K_{i-1}.$ 
\item $a_i \in K_{i-1} ^{alg} .$ 
\end{enumerate} 
If $K$ is a field of meromorphic functions, then in case 1, $a_i= \int f$ for some $f \in K_i$ and in case 2, $a_i = e^{\int f} $ for some $f \in K_i$. So, occasionally we will refer to these cases as integrals or exponentials of integrals. 
\end{defn}

Consider the differential equation \begin{equation} \label{o2} z'' + p z' + q z = 0 \end{equation} where $p,q$ are rational functions in $\m C (y)$. The classification of its Liouvillian solutions has been extensively studied, and in \cite{Kovacic}, an algorithmic solution to determining the Liouvillian solutions was given. 

Let $z$ be a solution to equation \ref{o2}, and let $v= e^{\frac{1}{2} \int p } z.$ It follows by direct computation that 
\begin{equation} \label{Kov1} 
v'' + (b - \frac{1}{4} a^2 - \frac{1}{2}a' ) v =0
\end{equation}  
Because the previous transformation only involves scaling by a Liouvillian element, the Liouvillian solutions of equation \ref{Kov1} are in bijective correspondence with the Liouvillian solutions to equation \ref{o2}, and so without loss of generality, we may now assume that the order two equation in which we are interested is given in the following normal form: 
\begin{equation} \label{Kov2} 
z'' = r(y) z 
\end{equation} 
where $r(y) \in \m C(y)$.
 
\begin{thm}\label{KovLiou} \cite[page 5]{Kovacic} 
With regard to the Liouvillian solutions of a second order linear differential equation with coefficients in $\m C(y)$, there are four mutually exclusive options: 
\begin{enumerate} 
\item The differential equation \ref{Kov2} has a solution of the form $e^{ \int w}$ where $w \in \m C(y)$. 
\item The differential equation \ref{Kov2} has a solution of the form $e^{ \int w}$ where $w \in \m C(y)^{alg}$ is an algebraic function of degree two over $\m C(y)$. 
\item All of the solutions of \ref{Kov2} are algebraic over $\m C(y)$. 
\item No solution of \ref{Kov2} are Liouvillian. 
\end{enumerate} 
\end{thm}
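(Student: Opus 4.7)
The plan is to prove this via the differential Galois theory of Picard--Vessiot extensions. Let $L/\mathbb{C}(y)$ be a Picard--Vessiot extension for equation (\ref{Kov2}) and let $G$ be its differential Galois group. Because the equation has no first-order term, the Wronskian of any two solutions is a constant, so $G$ is naturally realized as a Zariski-closed subgroup of $SL_2(\mathbb{C})$. The Kolchin--Liouville correspondence tells us that $L$ lies in a Liouvillian extension of $\mathbb{C}(y)$ if and only if the identity component $G^{\circ}$ is solvable, and moreover the existence of even one Liouvillian solution of (\ref{Kov2}) forces this solvability (since the full solution space is generated by the $G$-orbit of any nonzero solution).

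The second ingredient is the classification of algebraic subgroups of $SL_2(\mathbb{C})$ up to conjugation, going back to Klein: every such $G$ is conjugate into exactly one of four types:
\begin{enumerate}
\item[(a)] a subgroup of the Borel $B$ of upper-triangular matrices (the reducible case);
\item[(b)] a subgroup of the normalizer $N(T)$ of the diagonal torus not contained in any Borel (the imprimitive case);
\item[(c)] a finite primitive subgroup, namely a binary tetrahedral, octahedral, or icosahedral group;
\item[(d)] all of $SL_2(\mathbb{C})$.
\end{enumerate}
Only classes (a)--(c) have solvable identity component.

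I would then match these four cases to the four conclusions of the theorem. In case (a), $G$ stabilizes a line $\mathbb{C}z$ in the two-dimensional solution space, so $w := z'/z$ is $G$-invariant and therefore lies in $L^G = \mathbb{C}(y)$; writing $z = e^{\int w}$ gives conclusion (1). In case (b), $G$ stabilizes an unordered pair of lines $\{\mathbb{C}z_1, \mathbb{C}z_2\}$ permuted by a coset of an index-two subgroup $H < G$; then $w = z_1'/z_1$ lies in $L^H$, a degree-two extension of $\mathbb{C}(y)$, which is conclusion (2). In case (c) the extension $L/\mathbb{C}(y)$ is finite (Galois with finite group $G$), so all solutions are algebraic over $\mathbb{C}(y)$ and conclusion (3) holds. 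In case (d) the identity component is not solvable, so Liouville--Kolchin forbids any Liouvillian solution and conclusion (4) holds.

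Mutual exclusivity of the four options comes from the disjointness of the four conjugacy classes of subgroups: in (a) one gets $w \in \mathbb{C}(y)$, in (b) the minimal polynomial of $w$ genuinely has degree two, in (c) no nonzero $w = z'/z$ is Liouvillian via integrals/exponentials alone yet every solution is algebraic, and in (d) nothing Liouvillian appears. The main obstacle is the Klein-type classification in case (c), which is a classical group-theoretic fact proved by passing to $PSL_2(\mathbb{C})$ and classifying finite primitive subgroups; once this is in hand, the remainder is a routine unwinding of the Galois correspondence, and Kovacic's paper upgrades this existential argument to an effective algorithm.
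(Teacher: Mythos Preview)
The paper does not supply its own proof of this theorem: it is quoted verbatim from Kovacic's paper \cite{Kovacic} and used as a black box. Your outline is essentially the standard argument behind Kovacic's result, namely the classification of Zariski-closed subgroups of $SL_2(\mathbb{C})$ combined with the Picard--Vessiot/Kolchin correspondence between Liouvillian extensions and solvability of $G^\circ$. So there is nothing to compare against in this paper, and your route is the expected one.

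One small point worth tightening: your discussion of mutual exclusivity is slightly off. In case~(c) (finite primitive $G$), every solution $z$ is algebraic, so $w=z'/z$ is algebraic and in particular \emph{is} Liouvillian; the reason options (1) and (2) fail is not that $w$ is ``not Liouvillian via integrals/exponentials alone'' but rather that a primitive $G$ stabilizes no line and no unordered pair of lines, forcing the minimal polynomial of $w$ over $\mathbb{C}(y)$ to have degree strictly greater than $2$. More generally, the four options as literally stated in the paper are only mutually exclusive if one reads (2) as ``(2) holds and (1) fails'' and (3) as ``(3) holds and (1), (2) fail'' --- this is how Kovacic phrases them, and your subgroup-by-subgroup argument implicitly does the same. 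With that clarification your proof is correct.
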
 

The connection with Riccati equations is as follows. If we define $u = \frac{z'}{z}$ where $z$ is a solution to equation \ref{Kov2}, then via direct computation we have that \begin{eqnarray} \label{ric1} u' + u^2 - r(y) = 0. \end{eqnarray} 
Notice that $z=ce^{\int u}$ for some constant $c\in \mathbb{C}$ and in particular $z_1=e^{\int u}$ is also a solution to \ref{Kov2}. So using Theorem \ref{KovLiou} we have the following lemma.
 
 \begin{lem} 
The Riccati equation \ref{ric1} has an algebraic solution over $\m C(y)$ if and only if the second order linear differential equation \ref{Kov2} has a Liouvillian solution. 
 \end{lem}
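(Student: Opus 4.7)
The plan is to exploit the substitution $u = z'/z$, which is precisely the dictionary used to derive the Riccati equation from the linear equation $z'' = r z$ in the first place. For the ``if'' direction, suppose $u \in \mathbb{C}(y)^{alg}$ satisfies $u' + u^2 - r = 0$. Define $z = e^{\int u}$. Then $z' = u z$ and $z'' = (u' + u^2)z = r z$, so $z$ is a solution of the linear equation. Moreover, the tower
\[
\mathbb{C}(y) \;\subseteq\; \mathbb{C}(y)(u) \;\subseteq\; \mathbb{C}(y)(u)\bigl(\textstyle\int u\bigr) \;\subseteq\; \mathbb{C}(y)(u)\bigl(\textstyle\int u\bigr)(z)
\]
witnesses that $z$ is Liouvillian: the first extension is algebraic (case (3) of the definition), the second is by an antiderivative (case (1)), and the third is by an exponential of an antiderivative (case (2)).

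For the converse, assume the linear equation admits a Liouvillian solution, and apply Theorem \ref{KovLiou}, which leaves only cases (1), (2), (3). In cases (1) and (2) there is a solution of the form $z = e^{\int w}$ with $w \in \mathbb{C}(y)^{alg}$; then $u := z'/z = w$ is algebraic and, by the computation that produced the Riccati equation, satisfies $u' + u^2 - r = 0$. In case (3) every solution of $z'' = r z$ is algebraic over $\mathbb{C}(y)$; picking any nonzero such $z$, the derivation $d/dy$ extends uniquely to $\mathbb{C}(y)^{alg}$, so $z' \in \mathbb{C}(y)^{alg}$ and therefore $u = z'/z \in \mathbb{C}(y)^{alg}$, and the same computation shows $u$ solves the Riccati equation.

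There is essentially no serious obstacle here: the statement is a straightforward consequence of the classical correspondence $u \leftrightarrow z'/z$ together with Kovacic's structure theorem, which guarantees that whenever a Liouvillian solution exists, a solution from which algebraicity of $z'/z$ is transparent must exist. The only small technical point worth noting is that $\mathbb{C}(y)^{alg}$ is stable under the extension of $d/dy$, so that in case (3) the derivative $z'$ of an algebraic $z$ is still algebraic—this is standard and poses no real difficulty.
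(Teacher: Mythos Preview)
Your proof is correct and follows essentially the same approach as the paper: the paper's argument is just the observation (made in the paragraph preceding the lemma) that $z = e^{\int u}$ solves \ref{Kov2} whenever $u$ solves \ref{ric1}, combined with an appeal to Theorem~\ref{KovLiou}, which is exactly what you do in more detail. One cosmetic slip: you have the ``if'' and ``only if'' labels swapped---starting from an algebraic $u$ is the ``only if'' direction of the biconditional as stated---but the mathematics is sound.
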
 
 
Now, the verification of Condition \ref{Ric} follows from showing that equation \ref{Kov2} has no Liouvillian solutions. For this, we will need the following well-known result.

\begin{thm}\cite[page 8, case 4]{Kovacic}\label{Sl2} 
Let $G$ be the Picard-Vessiot group of \ref{Kov2}. There are no Liouvillian solutions to equation \ref{Kov2} if and only if $G = SL_2(\m C)$. 
\end{thm}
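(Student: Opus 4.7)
The plan is to combine the Picard-Vessiot Galois correspondence with the classification of algebraic subgroups of $SL_2(\m C)$. First I would observe that since equation~(\ref{Kov2}) has no $z'$-term, Abel's identity makes the Wronskian of any two solutions constant, so every Galois automorphism preserves the Wronskian and therefore $G \subseteq SL_2(\m C)$.

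Second, I would reduce ``no Liouvillian solution exists'' to ``the entire Picard-Vessiot extension fails to be Liouvillian.'' The classical reduction-of-order construction produces, from any nonzero solution $z_1$, a linearly independent second solution $z_2 = z_1 \int z_1^{-2}\, dy$; if $z_1$ is Liouvillian then so are $z_1^{-2}$, its antiderivative, and hence $z_2$. Therefore the existence of a single nonzero Liouvillian solution is equivalent to the whole solution space---and hence the entire Picard-Vessiot extension---being contained in a Liouvillian extension of $\m C(y)$.

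Third, I would invoke the standard Picard-Vessiot/Lie-Kolchin characterization: the Picard-Vessiot extension is Liouvillian if and only if the identity component $G^0$ is solvable; clause (3) of the definition of Liouvillian absorbs the finite extension corresponding to the quotient $G/G^0$. The final ingredient is the classification of algebraic subgroups of $SL_2(\m C)$: up to conjugacy, every proper algebraic subgroup is reducible (contained in a Borel), imprimitive (contained in the normalizer of a maximal torus), or finite primitive (one of the three exceptional binary polyhedral groups $A_4^{SL_2}$, $S_4^{SL_2}$, $A_5^{SL_2}$). In every such case $G^0$ is either trivial, a torus, or a Borel subgroup, hence solvable, whereas $SL_2(\m C)$ itself is not solvable. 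Stringing these equivalences together closes the proof.

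The main obstacle is not any single step---each is classical---but rather the bookkeeping around the distinction between $G$ and $G^0$ in the Liouvillian characterization, since a disconnected group with solvable identity component still produces Liouvillian solutions through its finite quotient. The substantive mathematical content lies in the classification of algebraic subgroups of $SL_2(\m C)$, which is the backbone of Kovacic's algorithm and the origin of the four-case division in Theorem~\ref{KovLiou}.
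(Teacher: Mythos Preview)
Your proposal is correct and follows the standard Picard--Vessiot argument. Note, however, that the paper itself does not supply a proof of this theorem: it is stated with a citation to \cite[page 8, case 4]{Kovacic} and used as a black box. Your outline---Wronskian constancy giving $G\subseteq SL_2(\m C)$, reduction of order promoting one Liouvillian solution to a full Liouvillian basis, the equivalence of Liouvillian extensions with solvability of $G^0$, and the classification of proper algebraic subgroups of $SL_2(\m C)$---is precisely the reasoning behind Kovacic's four-case division, so in effect you have reconstructed the cited result rather than offered an alternative to anything in the present paper.
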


In the next subsection we will prove that, in the special case of a Fuchsian group $\Gamma$, the Picard-Vesiot group of the order two linear equation associated to the Riccati equation ($\star\star$) is $SL_2(\m C)$.

\subsection{Monodromy and the PV-group}

At this point, let us recall that the the Schwarzian equation $(\star)$ we focus on is given with
\[R_{j_{\Gamma}}(y)=\frac{1}{2}\sum_{i=1}^{r}{\frac{1-\alpha_i^2}{(y-a_i)^2}}+\sum_{i=1}^{r}{\frac{A_i}{y-a_i}},\] where the $\alpha_i$'s, $A_i$'s and $a_i$'s are obtained from the fundamental domain for $\Gamma$-action on $\mathbb{H}$.
As discussed in the previous subsection, if the Riccati equation corresponding to (\ref{stareqn})
\begin{equation}\label{RiccatiJ}
\frac{du}{dy}+u^2+\frac{1}{2}R_{j_{\Gamma}}(y) =0.
\end{equation}
were to have an algebraic solution $f\in\mathbb{C}(y)^{alg}$, then as in the previous subsection $z=e^{\int f}$ is a Liouvillian solution of the linear equation
\begin{equation}\label{fuchsianstar}
\frac{d^2z}{dy^2}+\left(\frac{1}{4}\sum_{i=1}^{r}{\frac{1-\alpha_i^2}{(y-a_i)^2}}+\sum_{i=1}^{r}{\frac{A_i/2}{y-a_i}}\right)z =0.
\end{equation}
This equation is an example of the most general (normal) form of a Fuchsian equation of second order:

\begin{defn} Consider the linear equation $\frac{d^2z}{dy^2} + a_1 \frac{dz}{dy} + a_2 z = 0$, where $a_1,a_2$ are rational functions in $\m C (y)$.
\begin{enumerate}
    \item A point $p\in\mathbb{C}$ is called {\em regular} if the functions $a_i$ have no pole at $p$, otherwise $p$ is called {\em singular}. To determine whether the point $y=\infty$ is regular, one simply substitutes $y=z^{-1}$ in the equation and verifies whether $z=0$ is regular for the new equation.
    \item  A point $p\in\mathbb{C}$ (resp. $p=\infty$) is called {\em regular singular} if it is singular and for each $i=1,2$, the limit $\text{lim}_{y\rightarrow p}(y-p)^ia_i(y)$ (resp. $\text{lim}_{y\rightarrow \infty}y^ia_i(y)$) exists and is finite.
    \item The equation is called {\em Fuchsian} if all points of ${\bf P}^1(\mathbb{C})$ are regular or regular singular.
\end{enumerate}
\end{defn}
It is well-known (cf. \cite[Chapter 7]{CresHaj}) that if the equation is Fuchsian then the coefficients $a_1$ and $a_2$ are of the form
\[a_i(y)=\frac{B_i(y)}{\prod_{j=1}^s (z-\beta_j)^i}\]
where $B_i(y)$ is a polynomial of degree $\leq i(s-1)$.

\begin{rem}
We have already seen in the previous section how to obtain the normal form of the a second order linear equation (see equation \ref{Kov1})
\end{rem}

As it turns out, the problem of existence of Liouvillian solutions for Fuchsian equations of second order is a classical one. We direct the reader to \cite{Gray} and \cite{HPSG} for some historical perspectives. We will only review parts of the theory that is relevant to this paper. Our focus will be the work of Poincar\'e on the relationship between the {\it monodromy group} of the Fuchsian equation \ref{fuchsianstar} and `its' Fuchsian group $\Gamma$. It is this work - partly rediscovering Schwarz's uniformization of ${\bf P}^1(\mathbb{C})$ by the $j_{\Gamma}$'s -  that lead Poincar\'e to introduce the theory of Fuchsian groups and functions, and to attack the problem of the uniformization of other Riemman surfaces.

\par From now on, we assume that the equation 
\begin{equation}\label{fuchsian}
\frac{d^2z}{dy^2} = r(y) z
\end{equation} 
is Fuchsian and denote by $S$ its set of singular points. For $z\in{\bf P}^1(\mathbb{C})\setminus S$, let $f_1$ and $f_2$ be analytic solutions in a neighborhood of $z$. We also assume that $f_1$ and $f_2$ are a basis of solutions, {\it i.e.,} that they are linearly independent over $\mathbb{C}$. Given any $\gamma\in\pi_1({\bf P}^1(\mathbb{C})\setminus S;z)$, we can analytically continue $f_1$ and $f_2$ along $\gamma$ and obtain new solutions $\tilde{f}_1$ and $\tilde{f}_2$ of \ref{fuchsian}. So there exists a matrix $M_{\gamma}\in GL_2(\mathbb{C})$ such that 
 \[
 \begin{pmatrix}
 \tilde{f}_1 \\           
 \tilde{f}_2
 \end{pmatrix} = M_{\gamma}\cdot
 \begin{pmatrix}
 f_1 \\
 f_2
 \end{pmatrix}
 \]
The mapping $\rho:\pi_1({\bf P}^1(\mathbb{C})\setminus S;z)\rightarrow GL_2(\mathbb{C})$, taking $\gamma\mapsto M_{\gamma}$ is a group homomorphism called the monodromy representation. Its image $M$ is called the monodromy group of equation \ref{fuchsian}. From the monodromy group, one can determine the Picard-Vessiot group of the equation: 

\begin{fct}\label{galois} \cite[Chapter 7]{CresHaj}
Let $G$ be the Picard-Vessiot group of the Fuchsian equation \ref{fuchsian}.  Then,
\begin{enumerate}
\item $G\subseteq{SL_2(\mathbb{C})}$;
\item if $M$ is its monodromy group, then $G$ is the Zariski closure of $M$.
\end{enumerate}
\end{fct}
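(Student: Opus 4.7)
The plan is to handle the two assertions separately, exploiting that the equation $\frac{d^2 z}{dy^2} = r(y) z$ is already in normal form (the coefficient of $z'$ is zero) and that Fuchsian singularities have tame analytic behavior.

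For (1), I would fix a fundamental system of solutions $f_1, f_2$ inside the Picard--Vessiot extension $L/\mathbb{C}(y)$ and form the Wronskian $W = f_1 f_2' - f_2 f_1'$. Abel's identity $W' + a_1 W = 0$, combined with $a_1 = 0$, shows $W$ is a nonzero constant of $L$. Any $\sigma \in G$ acts on the $\mathbb{C}$-span of $\{f_1, f_2\}$ by a matrix $M_\sigma \in GL_2(\mathbb{C})$, and $\sigma(W) = W$ forces $\det M_\sigma = 1$. Hence $G \subseteq SL_2(\mathbb{C})$.

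For (2), one inclusion is essentially immediate. Fix a base-point $y_0 \notin S$ and a local basis $f_1, f_2$ of holomorphic solutions near $y_0$; analytic continuation along loops $\gamma \in \pi_1(\mathbb{P}^1(\mathbb{C}) \setminus S, y_0)$ produces the monodromy matrices $M_\gamma$. Because $r(y) \in \mathbb{C}(y)$, analytic continuation commutes with $\frac{d}{dy}$ and fixes $\mathbb{C}(y)$ pointwise, so each $M_\gamma$ defines a differential automorphism of $L$ over $\mathbb{C}(y)$. This identifies $M$ with a subgroup of $G(\mathbb{C})$, and since $G$ is Zariski closed, $\overline{M} \subseteq G$.

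The reverse inclusion $G \subseteq \overline{M}$ is the step I expect to be hardest. By the differential Galois correspondence it is enough to show that every element of $L$ fixed by all of $M$ already lies in $\mathbb{C}(y)$. This is precisely where the Fuchsian hypothesis is used: near each singular point $s \in S \cup \{\infty\}$, Frobenius theory writes every element of $L$ as a finite sum of products of powers of $(y-s)$, powers of $\log(y-s)$, and holomorphic functions, so every element of $L$ has at worst moderate (polynomial in $\log$) growth at each $s$. An $M$-invariant $h \in L$ is single-valued on $\mathbb{P}^1(\mathbb{C}) \setminus S$; combined with moderate growth at each puncture, Riemann's removable singularity theorem extends it to a meromorphic function on all of $\mathbb{P}^1(\mathbb{C})$, forcing $h \in \mathbb{C}(y)$. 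This produces $G = \overline{M}$ and completes the proof.
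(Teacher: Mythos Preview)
The paper does not supply a proof of this Fact; it is quoted from Crespo--Hajto with a bare citation, so there is no argument in the paper to compare against. Your outline is the standard textbook proof, and part~(1) is entirely correct.

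In part~(2) the architecture is right --- realize monodromy as differential automorphisms to get $\overline{M}\subseteq G$, then use the Galois correspondence and show $L^{M}=\mathbb{C}(y)$ --- but one step is not quite correct as stated. You claim that every element of $L$ is a finite sum of products of powers of $(y-s)$, powers of $\log(y-s)$, and holomorphic germs, and therefore has moderate growth at $s$. That description is valid for the Picard--Vessiot \emph{ring} $R=\mathbb{C}(y)[f_1,f_2,f_1',f_2']$, not for its fraction field $L$: a quotient of two such expressions can have zeros of the denominator accumulating at $s$ (think of $1/(f_1-c)$ when an exponent is non-real) and need not have moderate growth there. So the growth bound you invoke for a general $h\in L$ is not justified, and Riemann's removable-singularity argument does not apply directly.

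The standard repair is local and algebraic rather than analytic. Near $s$ the Frobenius basis places $L$ inside $K_s(w_1,w_2)$ in the non-resonant case, where $K_s$ is the field of convergent Laurent series at $s$ and $w_i=(y-s)^{\rho_i}$ (in the resonant case, inside $K_s(w,\log(y-s))$). The local monodromy $T_s$ scales $w_i$ by $e^{2\pi i\rho_i}$ and translates $\log(y-s)$ by $2\pi i$. Writing a $T_s$-fixed $h=P/Q$ with $P,Q$ coprime in $K_s[w_1,w_2]$, one sees that $T_s$ must scale $P$ and $Q$ by the same character; factoring out a common monomial reduces both to elements of $K_s$, so $K_s(w_1,w_2)^{T_s}=K_s$. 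In the logarithmic case one uses that a rational function of $\log(y-s)$ invariant under a nontrivial translation is constant in that variable. Hence any $h\in L^{M}$ lies in $K_s$ at every $s$, i.e.\ is meromorphic on all of $\mathbb{P}^1$, and is therefore rational. With this replacement for the moderate-growth step your proof is complete and agrees with the argument in the cited reference.
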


Note that in particular from (1), for the Fuchsian equation \ref{fuchsianstar}, the monodromy group $M$ is a subgroup of ${SL_2(\mathbb{C})}$. We will now explain how in the case of equation \ref{fuchsianstar}, the monodromy group $M$ is related to the Schwarzian equation. The following well-known fact - which can be easily verified - will be needed.

\begin{fct}
Let $t(y)=j^{-1}_{\Gamma}(y)$ be a branch of the inverse of $y=j_{\Gamma}(t)$. Then $t(y)$ satisfies the following equation 
\begin{equation}\label{InverseODE}
S_{\frac{d}{dy}}(t)=R_{j_{\Gamma}}(y).
\end{equation}
Furthermore, the functions
\[z_1=\frac{t}{(\frac{dt}{dy})^{\frac{1}{2}}}\;\;\;\;z_2=\frac{1}{(\frac{dt}{dy})^{\frac{1}{2}}}\]
form a basis of solutions of the Fuchsian equation \ref{fuchsianstar}
\[\frac{d^2z}{dy^2}+\frac{1}{2}R_{j_{\Gamma}}(y)z =0.\] 
\end{fct}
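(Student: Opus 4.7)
The plan is to prove the two assertions of the Fact by direct computation, using only the transformation rule for the Schwarzian derivative under inversion and the definition of $S_{d/dt}$.

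For the first assertion, I will use the standard composition/inversion identity for the Schwarzian derivative: if $t = t(y)$ is a branch of the inverse of $y = j_\Gamma(t)$, then differentiating $t(j_\Gamma(\tau)) = \tau$ twice and assembling the pieces gives
\[
S_{d/dt}(y) \;=\; -\,(y')^2\, S_{d/dy}(t),
\]
where $y' = dy/dt$. Substituting this into the Schwarzian equation $(\star)$, namely $S_{d/dt}(y) + (y')^2 R_{j_\Gamma}(y) = 0$, the factor $(y')^2$ cancels and we are left with $S_{d/dy}(t) = R_{j_\Gamma}(y)$, which is exactly (\ref{InverseODE}).

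For the second assertion, set $w := dt/dy$ so that $z_2 = w^{-1/2}$ and $z_1 = t\, z_2$. A direct computation gives
\[
z_2' \;=\; -\tfrac{1}{2} w^{-3/2} w', \qquad z_2'' \;=\; \tfrac{3}{4} w^{-5/2} (w')^2 - \tfrac{1}{2} w^{-3/2} w''.
\]
Rewriting $S_{d/dy}(t) = w'/w \cdot$ (stuff) via the identity
\[
S_{d/dy}(t) \;=\; \frac{w''}{w} - \tfrac{3}{2}\Bigl(\frac{w'}{w}\Bigr)^{2},
\]
one solves for $w''$ and substitutes into the formula for $z_2''$; the two terms proportional to $(w')^2$ cancel exactly, yielding $z_2'' = -\tfrac{1}{2}\, S_{d/dy}(t)\, z_2 = -\tfrac{1}{2} R_{j_\Gamma}(y)\, z_2$. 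For $z_1 = t z_2$, differentiating twice produces $z_1'' = t'' z_2 + 2 t' z_2' + t z_2''$; but $t' = w$ and $2 w z_2' = -w^{-1/2} w'' = -t'' z_2$ (here $t'' = w'$), so the first two terms cancel and $z_1'' = t z_2'' = -\tfrac{1}{2} R_{j_\Gamma}(y)\, z_1$. Thus both $z_1$ and $z_2$ solve the Fuchsian equation (\ref{fuchsianstar}).

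Finally, linear independence over $\mathbb{C}$ follows by a one-line Wronskian check:
\[
W(z_1,z_2) \;=\; z_1 z_2' - z_1' z_2 \;=\; t z_2 z_2' - (t' z_2 + t z_2') z_2 \;=\; -\,t'\, z_2^{2} \;=\; -\,w\cdot w^{-1} \;=\; -1,
\]
which is nonzero, so $\{z_1, z_2\}$ is a basis of the two-dimensional solution space. There is no serious obstacle here: the entire argument is a bookkeeping exercise around the identity $S_{d/dt}(y) = -(y')^{2} S_{d/dy}(t)$, and the only mild care required is keeping track of the fractional powers of $w = dt/dy$ when verifying that $z_2''$ simplifies to $-\tfrac{1}{2} R_{j_\Gamma}(y)\, z_2$ after the cancellation of the $(w')^2$ terms.
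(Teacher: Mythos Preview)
Your proof is correct and is exactly the kind of direct verification the paper has in mind: the paper does not give a proof at all, merely stating this as a ``well-known fact which can be easily verified.'' One small typo to fix: in the line for $z_1''$ you write $2 w z_2' = -w^{-1/2} w''$, but it should be $-w^{-1/2} w'$ (since $t'' = w'$, not $w''$); your subsequent identification with $-t'' z_2$ and the resulting cancellation are nevertheless correct.
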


Notice that in particular $\frac{z_1(y)}{z_2(y)}=t(y)$. This allows us to define from $M$ the projective monodromy of the equation \ref{InverseODE}. Namely, if $M_{\gamma}=\begin{pmatrix}
    a & b \\
    c & d
  \end{pmatrix}\in SL_2({\mathbb{C}})$ is monodromy matrix (as above), then
  \[t^*=\frac{at+b}{ct+d}\]
  is again a solution of the equation \ref{InverseODE}. The collection of matrices $\hat{M}_{\gamma}:t\mapsto t^*$ is called the projective monodromy group $\hat{M}$ of the equation \ref{InverseODE}. Of course $\hat{M}$ is the image of $M$ under the natural projection $\pi:SL_2(\mathbb{R})\rightarrow PSL_2(\mathbb{R})$. The following proposition is attributed to Poincar\'e in various sources but we know of no reference for a proof of it and thus reproduce it here.
 
 \begin{prop}
 The projective monodromy group of the equation \ref{InverseODE} is $\Gamma$. As a consequence, the monodromy group of the Fuchsian equation \ref{fuchsianstar} is $\pi^{-1}(\Gamma)$.
 \end{prop}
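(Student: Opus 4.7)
The plan is to realize $j_\Gamma$ as a Galois covering and identify the projective monodromy of (\ref{InverseODE}) with the deck group $\Gamma$ via analytic continuation of $t(y) = j_\Gamma^{-1}(y)$. First I would pin down the singular set: reading off the shape of $R_{j_\Gamma}$ shows that the singularities of (\ref{fuchsianstar}) are exactly $S = \{a_1,\dots,a_r\}$, that is, the $j_\Gamma$-images of the elliptic points and cusps. Let $E \subset \mathbb{H}$ be the discrete set of elliptic points of $\Gamma$. Away from $E$ (and from cusps) the action of $\Gamma$ on $\mathbb{H}$ is free and properly discontinuous, so $j_\Gamma$ restricts to a regular (Galois) covering
\[
j_\Gamma \colon \mathbb{H}\setminus E \;\longrightarrow\; \mathbb{P}^1(\mathbb{C})\setminus S
\]
whose deck group is exactly $\Gamma$. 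Since $\mathbb{H}\setminus E$ is connected, the induced homomorphism $\pi_1(\mathbb{P}^1(\mathbb{C})\setminus S, y_0) \twoheadrightarrow \Gamma$ is surjective.

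Next, fix a regular basepoint $y_0\notin S$ and a preimage $\tau_0\in\mathbb{H}\setminus E$, and pick the local branch $t(y)$ of $j_\Gamma^{-1}$ with $t(y_0)=\tau_0$. Given a loop $\gamma\in\pi_1(\mathbb{P}^1(\mathbb{C})\setminus S, y_0)$, lift $\gamma$ through the covering: the lift starts at $\tau_0$ and ends at some $g\cdot\tau_0$ with $g\in\Gamma$. Simultaneously, the analytic continuation of $t$ along $\gamma$ produces a new branch $\tilde t$ of $j_\Gamma^{-1}$ with $\tilde t(y_0) = g\cdot\tau_0$. But $g\cdot t(y)$ is also a branch of $j_\Gamma^{-1}$, because $j_\Gamma(g\tau)=j_\Gamma(\tau)$ for every $g\in\Gamma$, and it agrees with $\tilde t$ at $y_0$. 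By uniqueness of analytic continuation, $\tilde t = g\cdot t$. Thus the projective monodromy of $\gamma$ acts as $t\mapsto g\cdot t$, and the assignment $\gamma\mapsto g$ is precisely the surjection above. This identifies $\hat M = \Gamma$, which is the first assertion.

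For the consequence $M = \pi^{-1}(\Gamma)$, I would combine $\pi(M)=\Gamma$ (which only gives $M\subseteq\pi^{-1}(\Gamma)$) with an explicit local monodromy computation to show $-I\in M$. The indicial equation of (\ref{fuchsianstar}) at a singular point $a_i$ has characteristic exponents $\tfrac{1\pm\alpha_i}{2}$, with $\alpha_i=1/m_i$ for an elliptic value of order $m_i$ and $\alpha_i=0$ at a cusp. A small positive loop around $a_i$ therefore has local monodromy matrix with eigenvalues $-e^{\pm\pi i\alpha_i}$: its $m_i$-th power is $(-1)^{m_i+1}I$ in the elliptic case, while at a cusp it is $-U$ with $U$ unipotent. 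Putting these local contributions together with the global relation $g_1\cdots g_r = 1$ in $\Gamma$ (whose lift to $SL_2$ picks up a definite sign depending on the parities of the $m_i$'s) exhibits $-I\in M$, giving $M=\pi^{-1}(\Gamma)$.

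The main obstacle I expect is precisely this last step: the covering-space argument only detects $\Gamma$ up to the projection $\pi$, so promoting $\hat M=\Gamma$ to $M=\pi^{-1}(\Gamma)$ requires the explicit local/global sign analysis above rather than any soft categorical reasoning. The rest of the argument is a clean unwinding of the covering space picture, combined with the dictionary (already set up in the paper) between solutions of (\ref{fuchsianstar}) and branches of $j_\Gamma^{-1}$ via $t = z_1/z_2$.
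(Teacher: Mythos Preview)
Your argument for $\hat M = \Gamma$ is correct and is essentially the paper's own argument, recast in covering-space language. The paper simply records the chain of equivalences $g \in \hat M \Leftrightarrow g\cdot t$ is another branch of $j_\Gamma^{-1} \Leftrightarrow j_\Gamma(g\cdot t) = j_\Gamma(t) \Leftrightarrow g \in \Gamma$; your framing via the Galois covering $j_\Gamma \colon \mathbb{H}\setminus E \to \mathbf{P}^1(\mathbb{C})\setminus S$ makes the surjectivity direction (that every $g \in \Gamma$ actually arises from some loop) transparent, but the underlying idea is the same.

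On the consequence $M = \pi^{-1}(\Gamma)$: you are right that $\hat M = \Gamma$ only yields $\pi(M) = \Gamma$, and the paper offers no further argument. However, your proposed fix has a genuine gap, and in fact the stated consequence is not true in general. Your exponent computation is correct: at an elliptic value of order $m_i$ one gets $M_i^{m_i} = (-1)^{m_i+1} I$. But if every $m_i$ is odd and finite this gives $M_i^{m_i} = I$ for all $i$, and the global relation $M_1 \cdots M_r = I$ holds \emph{on the nose} in $SL_2$ (it is the image of the relation $\gamma_1 \cdots \gamma_r = 1$ in $\pi_1(\mathbf{P}^1 \setminus S)$, so there is no sign to pick up). For the cocompact triangle group $\Gamma_{(3,3,5)}$, for instance, the $M_i$ then satisfy all the defining relations of $\Gamma$, so the surjection $\pi \colon M \to \Gamma$ splits, $M \cong \Gamma$, and $-I \notin M$ since a non-elementary Fuchsian group has trivial centre. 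Fortunately none of this harms the paper's application: the only use made of the proposition is in the next result, where one needs $M$ Zariski dense in $SL_2(\mathbb{C})$. That already follows from $\pi(M) = \Gamma$, since $\Gamma$ is Zariski dense in $PSL_2(\mathbb{C})$ and a Zariski-closed subgroup of $SL_2(\mathbb{C})$ mapping onto $PSL_2(\mathbb{C})$ has dimension $3$ and hence equals $SL_2(\mathbb{C})$.
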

 \begin{proof} 
 Throughout $t(y)=j^{-1}_{\Gamma}(y)$ is a branch of the inverse of $j_{\Gamma}$ locally defined on some small domain $U$ and $\hat{M}$ is the projective monodromy group.
\par We have 
\begin{eqnarray*}
g\in\hat{M}\setminus \{I\} &\iff& gt(y) \text{ is another branch of the inverse of } j_{\Gamma} \text{ (defined on some}\\ 
& &\text{ larger domain $U'$).}\\
 &\iff& y=j_{\Gamma}(t(y))=j_{\Gamma}(gt(y))\\
 &\iff& g\in\Gamma\setminus \{I\}.
\end{eqnarray*}
We have used here that $j_{\Gamma}$ is a globally defined single-valued function. 
 \end{proof}

\begin{prop}
There are no Liouvillian solutions of the Fuchsian equation \ref{fuchsianstar}. Consequently, Condition \ref{Ric} holds for the Riccati equation \ref{RiccatiJ}. 
\end{prop}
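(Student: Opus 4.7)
The plan is to reduce the statement, via the Kovacic/Picard-Vessiot dictionary already assembled in this section, to a Zariski density statement about the Fuchsian group $\Gamma$ that is then settled by elementary facts about Fuchsian groups of the first kind.

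First I would string together the results just established. By Theorem~\ref{Sl2}, the Fuchsian equation \ref{fuchsianstar} has no Liouvillian solutions if and only if its Picard-Vessiot group $G$ is all of $SL_2(\mathbb{C})$. By Fact~\ref{galois}(2), $G$ is the Zariski closure of the monodromy group $M$ of \ref{fuchsianstar} in $SL_2(\mathbb{C})$. By the preceding proposition, $M = \pi^{-1}(\Gamma)$, where $\pi\colon SL_2(\mathbb{C})\to PSL_2(\mathbb{C})$ is the natural projection. Thus it suffices to prove that $\pi^{-1}(\Gamma)$ is Zariski dense in $SL_2(\mathbb{C})$.

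Next I would reduce this to Zariski density of $\Gamma$ in $PSL_2(\mathbb{C})$. Let $H\subseteq SL_2(\mathbb{C})$ be the Zariski closure of $\pi^{-1}(\Gamma)$. Since $-I\in\pi^{-1}(\Gamma)\subseteq H$, we have $H=\pi^{-1}(\pi(H))$. The map $\pi$ is a finite morphism of algebraic groups, hence closed, so $\pi(H)$ is a closed subgroup of $PSL_2(\mathbb{C})$ containing $\Gamma$. If $\Gamma$ is Zariski dense in $PSL_2(\mathbb{C})$, then $\pi(H)=PSL_2(\mathbb{C})$ and consequently $H=SL_2(\mathbb{C})$, as required.

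It remains to verify that a Fuchsian group $\Gamma$ of the first kind of genus $0$ is Zariski dense in $PSL_2(\mathbb{C})$. Because $\Gamma\subseteq PSL_2(\mathbb{R})$, its Zariski closure $\overline{\Gamma}$ in $PSL_2(\mathbb{C})$ is defined over $\mathbb{R}$, so it suffices to show that no proper algebraic subgroup of $PSL_2(\mathbb{C})$ defined over $\mathbb{R}$ contains $\Gamma$. The proper connected algebraic subgroups of $PSL_2(\mathbb{C})$ are (up to conjugacy) the Borel, the maximal torus, and the normalizer of the torus; all are solvable or virtually abelian, and any proper algebraic subgroup has a solvable (indeed virtually abelian) connected component of finite index. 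But a Fuchsian group of the first kind is non-elementary: its presentation by the signature $(0;m_1,\dots,m_r)$ (with $r\geq 3$ after discarding trivial cases) contains a non-abelian free subgroup, so $\Gamma$ is neither finite nor virtually solvable. Therefore $\overline{\Gamma}=PSL_2(\mathbb{C})$.

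The main obstacle I anticipate is being precise enough about the passage from $\Gamma\subseteq PSL_2(\mathbb{R})$ to the Zariski closure in $PSL_2(\mathbb{C})$, and making sure no degenerate signature is missed; but in the setting of the paper (first kind, genus $0$, with only finitely many elliptic points and cusps) this is routine and one may alternatively invoke the Borel density theorem, since $\Gamma$ is a lattice in the simple non-compact group $PSL_2(\mathbb{R})$. Combining everything, $G=SL_2(\mathbb{C})$, so \ref{fuchsianstar} admits no Liouvillian solutions, and by the lemma relating Liouvillian solutions of \ref{fuchsianstar} to algebraic solutions of the Riccati equation \ref{RiccatiJ}, Condition~\ref{Ric} holds.
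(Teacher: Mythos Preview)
Your argument is correct and follows exactly the same route as the paper: identify the monodromy group as $\pi^{-1}(\Gamma)$, invoke Fact~\ref{galois} to pass to the Zariski closure, and conclude $G=SL_2(\mathbb{C})$ via Theorem~\ref{Sl2}. The only difference is that the paper simply asserts the Zariski density of $\pi^{-1}(\Gamma)$ in $SL_2(\mathbb{C})$, whereas you actually justify it (either via the subgroup structure of $PSL_2$ together with the non-elementarity of first-kind Fuchsian groups, or via Borel density for the lattice $\Gamma\subset PSL_2(\mathbb{R})$); one minor slip is the parenthetical ``indeed virtually abelian'' for the identity component of a proper subgroup, since a Borel is solvable but not virtually abelian, though this does not affect the argument.
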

\begin{proof}
We have that $\pi^{-1}(\Gamma)$, the monodromy group of the Fuchsian equation \ref{fuchsianstar}, is Zariski dense in $SL_2(\mathbb{C})$. Hence by Fact \ref{galois}, the Picard-Vessiot group is $G=SL_2(\mathbb{C})$. By Theorem \ref{Sl2} there are no Liouvillian solutions for the equation.
\end{proof}
 
We thus obtain the first part of Theorem \ref{Geotrivial}; namely the Schwarzian equation (\ref{stareqn}) is strongly minimal.

\section{Geometric triviality and Algebraic relations} \label{algreltriv}
\subsection{The classification of strongly minimal sets}\label{trivial}

In this section we will discuss some general model-theoretic results regarding strongly minimal sets in differentially closed fields. In particular, we will explain some consequences of the (unpublished\footnote{A complete proof, can be found in \cite[Corollary 3.10]{PilZie} and in the arguments in the paragraphs leading up to Proposition 4.10 of \cite{Pil}. A good guide/summary of the proof can also be found in \cite[Section 2.1]{NagPil}.}) work of Hrushovski and Sokolovi\'c on the classification of strongly minimal sets. We will, from these considerations, obtain geometric triviality of the Schwarzian equations satisfied by the uniformizing functions in the earlier sections. Let us denote by $\mathcal{U}$ the differentially closed field of characteristic zero that we work in. We assume that $\mathcal{U}$ is saturated and that $\mathbb{C}$ (defined by $y' = 0$) is its field of constants. Notice incidentally that $\mathbb{C}$ is itself a strongly minimal definable set. Indeed, it is the only definable strongly minimal subfield of $\mathcal{U}$. In what follows strongly minimal sets are understood to be defined over some finitely generated differential subfield $K$ of $\mathcal{U}$.

\par The zero set of any irreducible order one differential polynomial in a single variable (by irreducible, we will always mean as a polynomial) is also strongly minimal. Higher order linear differential equations are never strongly minimal (one can define linear subspaces using elements of a fundamental set of solutions). For higher order non-linear equations, it seems that it is in general difficult to establish strong minimality. However, if the strong minimality of an equation is established, one can often employ a variety of model theoretic tools to establish even stronger results. 

Other important examples of strongly minimal sets are given by the following
\begin{fct}[\cite{Bui},\cite{HruSok}] Let $A$ be an abelian variety defined over ${\mathcal U}$. We identify $A$ with its set $A({\mathcal U})$ of ${\mathcal U}$-points. Then
\begin{enumerate}
\item $A$ has a (unique) smallest Zariski-dense definable subgroup, which we denote by $A^{\sharp}$. 
\item If $A$ is a simple abelian variety that does not descend to $\mathbb{C}$, then $A^{\sharp}$ is strongly minimal.
\end{enumerate}
\end{fct}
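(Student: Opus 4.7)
The plan is to carry out the classical Kolchin–Manin construction of $A^{\sharp}$ as the kernel of a canonical definable group homomorphism into a vector group, and then exploit $\omega$-stability of $DCF_0$ together with the non-descent hypothesis to upgrade the resulting rigidity of $A^{\sharp}$ to strong minimality. For part (1), I would start from the universal vectorial extension $0 \to W \to \widetilde{A} \to A \to 0$, where $W$ is a commutative unipotent group of dimension $g = \dim A$. The universality of this extension, combined with the prolongation functor on definable groups in $DCF_0$, yields a canonical definable group homomorphism $\mu_A \colon A(\mathcal{U}) \to W(\mathcal{U})$ (the Manin homomorphism, a non-abelian analogue of the logarithmic derivative $x \mapsto x'/x$ on $\mathbb{G}_m$). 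I would set $A^{\sharp} := \ker \mu_A$. Zariski-density of $A^{\sharp}$ is immediate: since $W$ is torsion-free, $\mu_A$ kills every torsion point of $A$, and torsion is Zariski-dense in an abelian variety. Minimality of $A^{\sharp}$ among Zariski-dense definable subgroups is then extracted from the observation that any definable subgroup of $A$ not containing $A^{\sharp}$ must project under $\mu_A$ to a proper algebraic subgroup of $W$, which is incompatible with its preimage being Zariski-dense in $A$.

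For part (2), assume $A$ is simple and does not descend to $\mathbb{C}$. I would first show that every proper definable subgroup $H \le A^{\sharp}$ is finite: its Zariski closure $\overline{H}$ is an algebraic subgroup of $A$, hence either finite or all of $A$ by simplicity; the latter would make $H$ Zariski-dense, forcing $H = A^{\sharp}$ by the minimality established in part (1). To upgrade ``no infinite proper definable subgroup'' to full strong minimality, I would use that $A^{\sharp}$ has finite Morley rank (it is cut out inside $\widetilde{A}$ by an order-one algebraic differential system) and that, because $A$ does not descend to $\mathbb{C}$, the generic type of $A^{\sharp}$ is orthogonal to the field of constants. By the Hrushovski–Sokolović analysis of minimal types in $DCF_0$, orthogonality to the constants together with stability forces $A^{\sharp}$ to be one-based; in a one-based group of finite Morley rank every definable subset is a boolean combination of cosets of definable subgroups, so the absence of proper infinite definable subgroups yields that every definable subset of $A^{\sharp}$ is finite or cofinite.

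The main obstacle I anticipate is precisely this last passage from ``no proper infinite definable subgroup'' to strong minimality: it is not automatic in general stable groups and genuinely requires the trichotomy and one-basedness arguments specific to $DCF_0$. The non-descent hypothesis is essential here, as it is the input that allows orthogonality to the constants to be deduced; were $A$ to descend to $\mathbb{C}$, then $A^{\sharp}$ would coincide with $A_0(\mathbb{C})$ for some $A_0/\mathbb{C}$, inheriting the full algebraically closed field structure on $\mathbb{C}$ and having Morley rank $g$, which fails strong minimality as soon as $g \ge 2$.
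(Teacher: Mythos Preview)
The paper does not prove this statement at all: it is recorded as a \emph{Fact} with citations to Buium \cite{Bui} and Hrushovski--Sokolovi\'c \cite{HruSok}, and no argument is supplied in the text. So there is nothing to compare against; your proposal is a sketch of the proofs in those references (and in Marker's exposition \cite{Mark}), not an alternative to anything in this paper.

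That said, your outline is broadly the standard one, with one genuine slip. In part~(1), your minimality argument does not work as written: you claim that a definable subgroup $H$ not containing $A^{\sharp}$ must have $\mu_A(H)$ a proper algebraic subgroup of $W$, and that this is incompatible with $H$ being Zariski-dense. But $\mu_A^{-1}(0)=A^{\sharp}$ is itself Zariski-dense, so a small image under $\mu_A$ says nothing against Zariski-density of $H$; moreover $\mu_A(H)$ is a priori only definable, not algebraic. The correct route to minimality goes through the $\omega$-stability of $DCF_0$ (descending chain condition on definable subgroups) together with the identification of $A^{\sharp}$ as the Kolchin closure of the torsion subgroup; see \cite{Mark} for a clean account.

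Your treatment of part~(2) is accurate in spirit: finiteness of proper definable subgroups follows from simplicity exactly as you say, and the passage from there to strong minimality does genuinely require the Hrushovski--Sokolovi\'c trichotomy (orthogonality to the constants, hence one-basedness, hence every definable subset is a boolean combination of cosets). You are right to flag that step as the non-trivial one and to identify the role of the non-descent hypothesis.
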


The subgroup $A^{\sharp}$ is called the \emph{Manin kernel} of $A$ (cf. \cite{Mark}). The trichotomy theorem, gives a classification of strongly minimal sets up to \emph{non-orthogonality}, a notion we will explain following the statement of the theorem. 

\begin{Thm}[\cite{HruSok},\cite{PilZie}]\label{trichotomy} Let $Y$ be a strongly minimal set. Then exactly one of the following holds:
\begin{enumerate}
\item $Y$ is nonorthogonal to the strongly minimal set $\mathbb{C}$,
\item $Y$ is nonorthogonal to $A^{\sharp}$ for some simple abelian variety $A$ over ${\mathcal U}$ which does not descend to $\mathbb{C}$, 
\item $Y$ is geometrically trivial.
\end{enumerate}
\end{Thm}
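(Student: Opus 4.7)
The plan is to deduce the trichotomy from Zilber's trichotomy theorem for \emph{Zariski geometries}, following the original argument of Hrushovski-Sokolovi\'c (with the alternative jet-space route of Pillay-Ziegler providing a complete proof in print). The first step is to equip each cartesian power $Y^n$ with the Kolchin topology induced from $\mathcal{U}^n$, and to verify that $Y$ thereby becomes a Zariski geometry in the sense of Hrushovski-Zilber. Noetherianity of this topology is the Ritt-Raudenbush basis theorem; the dimension theorem for intersections follows from additivity of the Kolchin polynomial together with finite Morley rank of $Y$; and the decisive \emph{ample families of plane curves} axiom, which is the substantive content, is handled in the Pillay-Ziegler approach by working in suitable jet spaces of $Y$ at generic points and producing enough rational correspondences there.

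Once $Y$ is known to be a Zariski geometry, Zilber's trichotomy theorem partitions strongly minimal Zariski geometries into three mutually exclusive classes: geometrically trivial, non-trivial locally modular, and non-locally-modular. The trivial case is precisely case (3) of the statement. In the non-locally-modular case, Hrushovski's field-configuration theorem produces an infinite definable algebraically closed field $F$ non-orthogonal to $Y$; since the only infinite definable algebraically closed subfield of $\mathcal{U}$ is the constant field $\mathbb{C}$ (a consequence of finite Morley rank and elimination of imaginaries for $DCF_0$), one concludes $F=\mathbb{C}$, giving case (1).

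In the remaining non-trivial locally modular case, Hrushovski's group-configuration theorem produces a locally modular strongly minimal definable group $G$ non-orthogonal to $Y$. The Pillay-Sokolovi\'c analysis of finite Morley rank groups in $DCF_0$, building on Buium's work on $\partial$-algebraic groups, shows that $G$ embeds definably in a commutative algebraic group over $\mathcal{U}$. One then rules out the cases in which $G$ arises from a subgroup of a linear algebraic group defined over $\mathbb{C}$ or from $A(\mathbb{C})$ for an abelian variety $A$ descending to $\mathbb{C}$, since these would all force non-orthogonality to $\mathbb{C}$ and thus violate mutual exclusivity with case (1); what remains is precisely the Manin kernel $A^{\sharp}$ of a simple abelian variety $A$ not descending to $\mathbb{C}$, yielding case (2). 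Mutual exclusivity of the three options is inherited from Zilber's trichotomy, together with the observation that non-orthogonality is an equivalence relation on strongly minimal sets.

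The principal technical hurdle is verifying the ample families axiom for arbitrary strongly minimal sets in $DCF_0$: Zilber's trichotomy for Zariski geometries is a formal consequence of the axioms, but extracting the axioms from the differential-algebraic data, and then translating the geometric output (``field'' or ``locally modular group'') back into concrete objects of $DCF_0$ (namely $\mathbb{C}$ and Manin kernels), is where essentially all the real work lies. This is the content of the unpublished Hrushovski-Sokolovi\'c manuscript and of the careful jet-space analysis of Pillay-Ziegler.
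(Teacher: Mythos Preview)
The paper does not prove this theorem at all: it is quoted as a known result from the literature, with the footnote pointing to \cite{PilZie} and \cite{Pil} for a complete proof and to \cite{NagPil} for a summary. So there is no ``paper's own proof'' to compare against; your sketch is an attempt to outline the argument that the cited references contain.

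As such an outline it is broadly on target, but you have conflated the two routes. The original Hrushovski--Sokolovi\'c argument does proceed via Zariski geometries, verifying the Hrushovski--Zilber axioms for the induced structure on $Y$ and then invoking the Zariski-geometry trichotomy. The Pillay--Ziegler paper, however, does \emph{not} verify Zariski-geometry axioms and in particular does not establish any ``ample families of plane curves axiom''; rather, it proves directly (via jet spaces and arc spaces) the \emph{canonical base property} for finite-rank types in $DCF_0$, from which the Zilber dichotomy (locally modular versus non-orthogonal to a field) follows by general stability theory without passing through Zariski geometries. Your description of Pillay--Ziegler as ``handling the ample families axiom'' is therefore inaccurate. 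The downstream steps you describe---field configuration yielding $\mathbb{C}$ in the non-locally-modular case, group configuration plus the Buium/Hrushovski--Sokolovi\'c analysis of definable groups yielding Manin kernels in the non-trivial locally modular case---are correct in spirit and match the literature.
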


\begin{defn} 
Let $Y$ and $Z$ be strongly minimal sets. Denote by $\pi_1:Y\times Z\rightarrow Y$ and $\pi_2:Y\times Z\rightarrow Z$ the projections to $Y$ and $Z$ respectively. We say that $Y$ and $Z$ are \emph{nonorthogonal} if there is some infinite definable relation $R\subset Y\times Z$ such that ${\pi_1}_{|R}$ and ${\pi_2}_{|R}$ are finite-to-one functions.
\end{defn} 

The sets $Y$ and $Z$ are defined over some finitely generated differential subfield $K$ of $\mc U$, and so for any differential field $F$ containing $K$, it makes sense to ask whether a relation $R$ as above can be defined over $F$.

\begin{defn} We say that $Y$ is \emph{weakly orthogonal} to $Z$ over $F$ if no such relation $R$ can be defined over $F$. 
\end{defn} 
The following facts from the model theory of differential fields are well-known (see for instance, \cite{MarkI}).

\begin{fct} Let $Y$ and $Z$ be strongly minimal sets.
\begin{enumerate} 
\item Nonorthogonality is an equivalence relation on strongly minimal sets. 

\item Nonorthogonality classes of strongly minimal differential equations refine various basic invariants of the equations. For instance, if $Y,Z$ are nonorthogonal then ${\rm order}(Y) = {\rm order}(Z)$. Recall that the order of a definable set $Y$ is given by ${\rm order}(Y)=\sup\{{\rm tr.deg.}_KK\gen{y}:y\in Y\}$, where $K$ is any countable differential field over which $Y$ is defined.








\item If $Y$ and $Z$ are nonorthogonal, then they fall into the same category of Theorem \ref{trichotomy}. 

\item Strongly minimal sets that fall in cases (2) and (3) of Theorem \ref{trichotomy} are said to be \emph{locally modular} (and in case (1) the sets are \emph{non-locally modular}).

\item Orthogonality has a natural interpretation in terms of transcendence. Suppose that $Y$ and $Z$ are orthogonal strongly minimal sets defined over $K$. Let $a,b$ be solutions of $Y, Z$, respectively. Let $F$ be any differential field extending $K$. Then 
$$ {\rm tr.deg.}_F(F\langle a,b \rangle) = {\rm tr.deg.}_F(F\langle a \rangle) +{\rm tr.deg.}_F(F\langle b \rangle).$$ Conversely, if the inequality does not hold for some $a \in Y$ and $b \in Z$ over $F$, then $Y$ is not weakly orthogonal to $Z$ over $F$. 
\end{enumerate}
\end{fct}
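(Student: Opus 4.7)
The Fact collects five standard properties of nonorthogonality for strongly minimal definable sets, each a consequence of geometric stability theory specialized to $\mathrm{DCF}_0$. My plan is to verify the items in turn using a few common ingredients: Morley rank of a strongly minimal set equals its order, forking-independence on a strongly minimal set coincides with model-theoretic algebraic independence, and in $\mathrm{DCF}_0$ the model-theoretic algebraic closure agrees with the field-theoretic algebraic closure of the differential subfield generated by the parameters.

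For (1), I would verify reflexivity via the diagonal $\{(y,y) : y \in Y\} \subset Y \times Y$, symmetry by swapping coordinates of a witnessing relation, and transitivity by composing finite-to-finite relations $R_1 \subset Y \times Z$ and $R_2 \subset Z \times W$; strong minimality lets one throw away the finitely many bad fibers so that the composition remains finite-to-finite on both projections (after passing to a cofinite definable subset). For (2), pick $a \in Y$ generic over the parameters and choose $b$ with $(a,b)$ in the witnessing relation; then $b$ is interalgebraic with $a$, which forces $\mathrm{order}(Y) = \mathrm{order}(Z)$ once one identifies order with the transcendence degree of a generic point. Item (4) is purely definitional. For (5), I would invoke the standard translation of orthogonality into forking-independence: $Y \perp Z$ over $F$ means every solution $a$ of $Y$ is independent from every solution $b$ of $Z$ over $F$, and in $\mathrm{DCF}_0$ this is precisely algebraic independence of the differential field extensions $F\langle a \rangle$ and $F\langle b \rangle$ over $F$, so both directions of (5) follow from the additivity of transcendence degree.

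The main obstacle is (3), which says the trichotomy classes are closed under nonorthogonality. Non-local modularity (case (1)) is invariant under interalgebraic correspondences because local modularity is a property of the associated pregeometry, not of the underlying set; this separates (1) from (2) and (3). Separating (2) from (3) requires more: within the locally modular class one must know that the Manin kernel of a simple abelian variety $A$ not descending to $\mathbb{C}$ carries a pregeometry which remembers $A$ up to isogeny, so that nonorthogonality to a Manin kernel distinguishes it from a trivial geometry. This is a genuine input from the full Hrushovski--Sokolovi\'c analysis of strongly minimal sets in $\mathrm{DCF}_0$, beyond the trichotomy statement itself (Theorem \ref{trichotomy}). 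Granted this, if $Y$ is nonorthogonal to $Z$ then their pregeometries are equivalent, so $Y$ and $Z$ lie in the same class. The remaining items are essentially bookkeeping and can be handled directly once the Zilber-style classification of pregeometries in the background is in place.
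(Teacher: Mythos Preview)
The paper does not supply a proof of this Fact; it is recorded as well-known with a pointer to Marker's survey \cite{MarkI}. Your sketch is essentially correct and would serve as an adequate justification of all five items.

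One simplification is worth noting for item (3). You treat it as the ``main obstacle'' and reach for pregeometry invariants and the internal structure of Manin kernels, but this is unnecessary. Theorem \ref{trichotomy} already asserts that \emph{exactly one} of the three alternatives holds, and cases (1) and (2) are themselves phrased as nonorthogonality conditions (to $\mathbb{C}$, respectively to some $A^\sharp$). Hence (3) follows immediately from the transitivity you established in (1): if $Y \not\perp Z$ and $Y$ lies in case (1) or (2), then so does $Z$ by transitivity, and case (3) follows by exclusion. No appeal to the Hrushovski--Sokolovi\'c analysis beyond the trichotomy statement is needed here; the mutual exclusivity of the three cases is part of that theorem, not something you must re-derive.
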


Nonorthogonality of Manin Kernels has been further classified in terms of isogeny classes of abelian varieties.
\begin{fct}\label{isogeny} If $A$ and $B$ are two simple abelian varieties which do not descend to $\mathbb{C}$, then $A^{\sharp}$ and $B^{\sharp}$  are non-orthogonal if and only if $A$ and $B$ are isogenous.
\end{fct}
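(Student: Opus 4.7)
The plan is to prove both directions of the equivalence separately, with the forward direction being routine and the reverse direction requiring the full power of the geometric structure of Manin kernels.

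For the direction assuming $A$ and $B$ are isogenous, I would fix an isogeny $\phi : A \to B$. Because $\phi$ is a homomorphism of algebraic groups and the Manin kernel is the smallest Zariski-dense definable subgroup, the image $\phi(A^{\sharp})$ is a definable Zariski-dense subgroup of $B$, hence equal to $B^{\sharp}$; similarly the kernel of $\phi|_{A^{\sharp}}$ is contained in the (finite) kernel of $\phi$. The graph $R = \{(a, \phi(a)) : a \in A^{\sharp}\}$ is then an infinite definable subset of $A^{\sharp} \times B^{\sharp}$ with both coordinate projections finite-to-one (indeed bijective and finite-to-one respectively), witnessing non-orthogonality.

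For the reverse direction, suppose $R \subset A^{\sharp} \times B^{\sharp}$ is an infinite definable relation with both projections finite-to-one. Since $A$ and $B$ are simple abelian varieties not descending to $\mathbb{C}$, the sets $A^{\sharp}$ and $B^{\sharp}$ are strongly minimal and locally modular (case (2) of Theorem~\ref{trichotomy}). Working inside the group $A^{\sharp} \times B^{\sharp}$, one applies the fundamental theorem on locally modular strongly minimal groups (a consequence of Hrushovski–Pillay's analysis of $1$-based groups): every definable subset of a Cartesian power of a locally modular group is, up to finite Boolean combinations, a finite union of cosets of connected definable subgroups. Applying this to $R$, I would extract a connected definable subgroup $H \le A^{\sharp} \times B^{\sharp}$ such that a translate of $H$ meets $R$ in an infinite set, and the finite-to-one property of the projections $R \to A^{\sharp}$ and $R \to B^{\sharp}$ forces the analogous property for $H$.

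From $H$, I would proceed as follows. Its projection to $A^{\sharp}$ is a definable subgroup with finite kernel and cofinite (hence, by strong minimality, finite-index and therefore, by connectedness of $A^{\sharp}$, all of $A^{\sharp}$) image; similarly for $B^{\sharp}$. Thus $H$ is the graph, up to a finite-to-finite correspondence, of a definable isogeny $\psi : A^{\sharp} \to B^{\sharp}$. The final step, which I expect to be the main technical obstacle, is to upgrade $\psi$ from a differential-algebraic isogeny of Manin kernels to an algebraic isogeny $A \to B$. Here one uses that $A^{\sharp}$ is Zariski-dense in $A$ together with the characterization of definable homomorphisms between Manin kernels: any such $\psi$ extends uniquely to an algebraic group homomorphism between the ambient abelian varieties (this is a consequence of the ``function field Mordell–Lang'' style analysis in \cite{HruSok}, or can be deduced from the fact that the Kolchin closure of $\psi$ inside $A \times B$ is an algebraic subgroup projecting finitely onto each factor). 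Simplicity of $A$ and $B$ then forces this extension to be an isogeny, completing the proof.
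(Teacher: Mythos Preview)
The paper does not give a proof of this statement: it is labelled a \emph{Fact} and cited as a known result from the Hrushovski--Sokolovi\'c circle of ideas (see the references \cite{HruSok}, \cite{Mark}), so there is no ``paper's own proof'' to compare against. Your proposal is a reasonable reconstruction of the standard argument via local modularity of Manin kernels and the $1$-based group theorem, and the overall architecture of both directions is correct.

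One small imprecision worth flagging in your forward direction: from the fact that $\phi(A^{\sharp})$ is a Zariski-dense definable subgroup of $B$ you conclude it \emph{equals} $B^{\sharp}$, but minimality of $B^{\sharp}$ only gives $B^{\sharp}\subseteq\phi(A^{\sharp})$. To finish you should note that both groups are strongly minimal (Morley rank $1$), so the index $[\phi(A^{\sharp}):B^{\sharp}]$ is finite; this is already enough for the finite-to-finite correspondence you need, and in fact connectedness of the strongly minimal group $\phi(A^{\sharp})$ then forces equality. In the reverse direction, the step you correctly identify as the main obstacle---passing from a differential-algebraic correspondence $H\leq A^{\sharp}\times B^{\sharp}$ to an algebraic isogeny---is handled exactly as you suggest: take the Zariski closure of $H$ in $A\times B$, which is an algebraic subgroup projecting dominantly (hence, by simplicity, surjectively with finite kernel) onto each factor.
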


For relations $R$ which witness nonorthogonality between trivial strongly minimal sets, there is an important and very general descent result:  
\begin{fact} \label{nonewparam} \cite[Corollary 2.5.5]{GST} Two geometrically trivial strongly minimal sets are nonorthogonal if and only if they are non weakly orthogonal. That is, the relation $R$ witnessing nonorthogonality of $X$ and $Y$ can be defined over the differential field generated by the parameters used in the equations defining $X$ and $Y$. 
\end{fact}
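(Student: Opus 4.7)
The plan is to reduce this to a standard descent argument in stability theory: take a generic point of the witnessing relation, apply canonical base analysis available in the $1$-based setting provided by triviality, and then invoke geometric triviality to force the canonical parameters to descend to the base field.

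I would set up as follows. Let $X, Y$ be geometrically trivial strongly minimal sets defined over a differential field $K$, and suppose $X$ and $Y$ are nonorthogonal; by definition this yields a finite-to-finite definable relation $R \subseteq X \times Y$ over some parameter set $F \supseteq K$. Pick a generic pair $(a,b) \in R$ over $F$, so that $a$ is generic in $X$ over $F$, $b$ is generic in $Y$ over $F$, and $a, b$ are interalgebraic over $F$. The complete type $\text{stp}(ab/F)$ has $U$-rank one (finite-to-finite projection onto strongly minimal sets), so its canonical base $e := \text{Cb}(\text{stp}(ab/F)) \in \text{dcl}^{eq}(F)$ is algebraic over a finite initial segment of any Morley sequence $(a_i, b_i)_{i<\omega}$ in $\text{stp}(ab/F)$.

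The crucial technical step is to invoke triviality of the combined pregeometry on $X \cup Y$, which follows from the triviality of each set together with the fact that their nonorthogonality identifies a single trivial pregeometry class of generic types (after passage to interalgebraic correspondence). Triviality applied to a finite Morley initial segment $\{a_0, b_0, \dots, a_{n-1}, b_{n-1}\}$ forces every algebraic relation among these points to come from pairwise ones; by Morley independence, the only surviving pairwise relations are the intrinsic ones $a_i \in \text{acl}(Kb_i)$ built into the relation $R$. These intrinsic relations are already $K$-definable, since they encode the generic behavior of $R$ on the strongly minimal sets $X, Y$ defined over $K$. Consequently $e \in \text{acl}^{eq}(K)$, so $\text{tp}(ab/F)$ does not fork over $K$, and the canonical $F$-definition of $R$ at its generic descends to a $K$-definable finite-to-finite relation $R' \subseteq X \times Y$ realizing the same generic type as $R$, which is exactly the witness of non-weak-orthogonality over $K$ that we sought.

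The main obstacle is establishing the triviality of the combined pregeometry on $X \cup Y$: one must show that nonorthogonality between trivial strongly minimal sets, witnessed via finite-to-finite correspondences, preserves triviality across the union of the two pregeometries. This requires the standard but delicate fact that nonorthogonal trivial minimal types span a single trivial pregeometry on their common nonorthogonality class (up to interalgebraicity with the existing parameters), which is the technical heart of the argument in \cite{GST}; once this is in place, the descent of $e$ to $\text{acl}^{eq}(K)$ is essentially formal.
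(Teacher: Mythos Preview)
The paper does not supply a proof of this statement: it is recorded as a Fact with a bare citation to \cite[Corollary~2.5.5]{GST}. So there is nothing in the paper to compare against; you are attempting to reconstruct Pillay's argument.

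Your outline has the right architecture --- canonical base of the witnessing correspondence, Morley sequence, descent via triviality --- but the central step is circular as written. You assert that ``the only surviving pairwise relations are the intrinsic ones $a_i \in \operatorname{acl}(Kb_i)$ built into the relation $R$'' and that these ``are already $K$-definable''; but $a_i \in \operatorname{acl}(Kb_i)$ is precisely the conclusion at stake, and a priori you only have $a_i \in \operatorname{acl}(Fb_i)$. Nothing you have established excludes the possibility that $a_i$ and $b_i$ are $K$-independent while $F$-interalgebraic. There is a second, separate gap: even granting full knowledge of the pairwise $K$-relations among the $a_i,b_i$, it does not follow that the imaginary $e$ lies in $\operatorname{acl}^{eq}(K)$, since triviality of the pregeometry on $X\cup Y$ constrains real elements of $X\cup Y$, not arbitrary imaginaries algebraic over them. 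You flag the combined-pregeometry question as the main obstacle and defer it to \cite{GST}, but the circularity above is a distinct and more immediate problem. A non-circular restructuring is to argue by contradiction: assume $X$ and $Y$ are weakly orthogonal over $K$; then weak orthogonality together with the separate triviality of $X$ and of $Y$ does yield triviality of $X\cup Y$ over $K$, whence pairwise $K$-independence of the Morley sequence forces full $K$-independence; but $U(a_0b_0/Ke)=1<2=U(a_0b_0/K)$ shows $a_0b_0$ forks with $e\in\operatorname{acl}^{eq}(K,a_1b_1,\ldots,a_nb_n)$ over $K$, contradicting that independence.
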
 

\begin{prop}
Let $Y$ be a strongly minimal set of order $>1$ and suppose that $Y$ is defined over $\m C$. Then $Y$ is geometrically trivial.
\end{prop}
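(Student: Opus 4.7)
The natural approach is to invoke the trichotomy theorem (Theorem \ref{trichotomy}) for $Y$ and eliminate the two non-trivial options, leaving geometric triviality as the only remaining possibility.

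First, I would rule out the case where $Y$ is nonorthogonal to $\m C$. By item (2) of the stated facts about nonorthogonality, order is an invariant of nonorthogonality classes; since the order of $\m C$ as a strongly minimal set is $1$ while the order of $Y$ is strictly greater than $1$ by hypothesis, $Y$ cannot be nonorthogonal to $\m C$.

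The main step is to rule out the possibility that $Y$ is nonorthogonal to a Manin kernel $A^{\sharp}$ for some simple abelian variety $A$ not descending to $\m C$. The strategy is to exploit the action of $\mathrm{Aut}(\mc U / \m C)$ on both sides. Since $Y$ is defined over $\m C$, every $\sigma \in \mathrm{Aut}(\mc U / \m C)$ maps $Y$ to itself. Pick an infinite finite-to-finite definable correspondence $R \subseteq Y \times A^{\sharp}$ witnessing nonorthogonality; applying $\sigma$ produces $\sigma(R) \subseteq Y \times \sigma(A)^{\sharp}$, which witnesses nonorthogonality of $Y$ and $\sigma(A)^{\sharp}$ (one checks that the Manin kernel construction commutes with $\m C$-automorphisms, since the differential structure is $\sigma$-invariant). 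Composing, $A^{\sharp}$ and $\sigma(A)^{\sharp}$ are then nonorthogonal, so by Fact \ref{isogeny}, $A$ and $\sigma(A)$ are isogenous for every $\sigma \in \mathrm{Aut}(\mc U / \m C)$. Because $A$ does not descend to $\m C$, some parameter in its field of definition is transcendental over $\m C$; saturation of $\mc U$ then provides uncountably many automorphisms producing pairwise non-isomorphic $\sigma(A)$. This contradicts the fact that the isogeny class of a fixed abelian variety contains only countably many isomorphism classes of abelian varieties.

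With cases (1) and (2) of the trichotomy eliminated, $Y$ must fall into case (3) and hence be geometrically trivial. The step I expect to be the main obstacle is the second one: precisely, the bookkeeping that $(\cdot)^{\sharp}$ really does commute with $\mathrm{Aut}(\mc U/\m C)$ and that the composition of conjugate correspondences yields an honest witness of nonorthogonality of $A^{\sharp}$ and $\sigma(A)^{\sharp}$ (rather than being, say, finite or degenerate). Once this is verified, the countability-versus-uncountability argument closes the case cleanly.
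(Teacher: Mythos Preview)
Your proposal is correct and follows the same overall strategy as the paper: invoke the trichotomy, dispose of case~(1) by the order mismatch, and in case~(2) derive a contradiction from the countability of isogeny classes. The packaging of the Manin-kernel step differs slightly. The paper passes to a moduli space of principally polarized abelian varieties over $\m C$, uses uniform definability of Manin kernels to obtain a $\m C$-formula $\phi(x)$ expressing ``$Y$ is nonorthogonal to $V_x^{\sharp}$'', observes that $\phi$ can have only countably many (hence finitely many) solutions, and concludes that the moduli point lies in $\m C$. Your version runs the dual automorphism argument directly: conjugate the correspondence by $\sigma\in\mathrm{Aut}(\mc U/\m C)$ and use transitivity of nonorthogonality plus Fact~\ref{isogeny}. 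These are equivalent model-theoretic idioms; the paper's route makes the moduli-space bookkeeping explicit (which is exactly what underlies your claim that saturation produces uncountably many pairwise non-isomorphic $\sigma(A)$), while yours is a little more streamlined once one accepts that $(\cdot)^{\sharp}$ and ``infinite finite-to-finite correspondence'' are preserved by automorphisms.
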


\begin{proof}\footnote{We thank Dave Marker for a sketch of this proof.}
First note that since ${\rm order}(Y)\neq 1$, $Y$ is necessarily orthogonal to the constants ${\m C}$. So by Theorem \ref{trichotomy}, to show that $Y$ is geometrically trivial, we only need to show that it is orthogonal to all Manin kernels. We argue by contradiction.
\par Suppose that $Y$ is nonorthogonal to $A^{\sharp}$ for some simple abelian variety $A$ over ${\mathcal U}$ which does not descend to $\mathbb{C}$. Let $(A,\lambda)$ be a principal polarization of $A$. We can use the fact that moduli spaces of principally polarized abelian varieties exist over any base field (cf. \cite[Chapter 7]{Mum}). So let $(V,\varphi)$ be a moduli space for $(A,\lambda)$ over $\m C$. For some $b$ in $(V,\varphi)$, we have that $(A,\lambda)=(V_b,\varphi_b)$.
\par Using uniform definability of Manin kernels \cite[Lemma 2.25]{NagPil}, we have a formula $\phi(x)$ over $\m C$
asserting that $Y$ is non-orthogonal to $V^{\sharp}_x$ and such that $\phi(b)$ is true in $\mathcal{U}$.  If $\phi(c)$ holds, 
then, by Fact \ref{isogeny}, it must be the case that $V_b$ and $V_c$ are isogenous. But there are only countably many abelian varieties isogenous to $V_b$.  Hence the definable set $\{a\in\mathcal{U}:\phi(a) \text{ is true in } \mathcal{U}\}$ is countable and so must be finite. In other words $c$ is algebraic over (and so in) $\m C$. But this contradicts the assumption that $A$ does not descend to $\mathbb{C}$.
\end{proof}

\begin{cor} \label{trivHS} For $\Gamma$ a Fuchsian group, equation (\ref{stareqn}) defines a geometrically trivial strongly minimal set. 
\end{cor}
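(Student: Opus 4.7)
The plan is to apply directly the Proposition immediately preceding this corollary, which says that any strongly minimal definable set of order greater than $1$ defined over $\mathbb{C}$ is geometrically trivial. So it suffices to verify three properties of the solution set $\mathcal{Y}$ of $(\star)$: (i) strong minimality, (ii) order strictly greater than $1$, and (iii) definability over $\mathbb{C}$.

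All three have essentially been established by the time we reach this point. Strong minimality of $\mathcal{Y}$ is Theorem \ref{juststrmin} applied in the presence of Condition \ref{Ric}, which was verified at the end of Section \ref{Liouville} via a monodromy argument: the Poincar\'e identification of the projective monodromy of the inverse equation with $\Gamma$ shows that the monodromy of the linear equation \eqref{fuchsianstar} is $\pi^{-1}(\Gamma)$, Zariski dense in $SL_2(\mathbb{C})$; by Fact \ref{galois} the Picard-Vessiot group is therefore all of $SL_2(\mathbb{C})$, and so by Theorem \ref{Sl2} combined with the Riccati-linear correspondence there are no solutions of the Riccati equation \eqref{RiccatiJ} in $\mathbb{C}(y)^{alg}$. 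The order bound is immediate from Remark \ref{SMspecial}: $(\star)$ is a third-order ODE, so a generic (i.e., non-constant) solution has transcendence degree $3$ over any field of definition, giving $\mathrm{order}(\mathcal{Y}) = 3 > 1$. Finally, $\mathcal{Y}$ is defined over $\mathbb{C}$ because the coefficients of $R_{j_\Gamma}(y)$, as recorded explicitly in Section \ref{basic}, are complex constants.

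In short, the corollary is an assembly of previously established ingredients and the Proposition. The only substantive ``obstacle" has already been overcome, namely the verification of Condition \ref{Ric}, which required the nontrivial input that the monodromy of the Fuchsian equation \eqref{fuchsianstar} is Zariski dense in $SL_2(\mathbb{C})$; beyond that, what is needed here is a short deduction from the trichotomy theorem (via the preceding Proposition).
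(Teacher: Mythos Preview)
Your proposal is correct and matches the paper's intended argument exactly: the corollary is stated immediately after the Proposition (strongly minimal sets of order $>1$ defined over $\mathbb{C}$ are geometrically trivial) with no separate proof, precisely because it follows by combining that Proposition with the strong minimality established in Section~\ref{Liouville} and the fact that $(\star)$ is a third-order equation with coefficients in $\mathbb{C}$. Your write-up simply makes explicit the three hypotheses one must check.
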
 
We have hence established the entirety of Theorem \ref{Geotrivial}.

\subsection{Transcendence and orbits of the commensurator of $\Gamma$}



\begin{Thm} \label{2relations}
Let $(K,\partial)$ be a differential extension of $(\mathbb C(t), \frac{\partial}{\partial t})$ with no new constants. Let $\Gamma$ be a Fuchsian group and $j_1$, $j_2$ be two solutions of the equation 
$$
S_{\frac{d}{dt}}(y) +(y')^2\cdot R_{\Gamma}(y) =0.
$$
If \[\td_{K} K(j_1, j_1', j_1'',j_2,j_2',j_2'')<6\] then $j_1$ or $j_2$ is algebraic over $K$ or there is a nonzero polynomial $P(y_1,y_2)$ over $\m C$ such that $P(j_1, j_2)=0$. 
\end{Thm}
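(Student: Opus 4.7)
The plan is to combine strong minimality and geometric triviality of the Schwarzian equation $(\star)$ (Theorem \ref{Geotrivial}) with the descent result Fact \ref{nonewparam}, and then identify the resulting binary correspondence with a $\text{Comm}(\Gamma)$-correspondence.

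By strong minimality (Theorem \ref{Geotrivial} and Remark \ref{SMspecial}), for each $i=1,2$ we have $\td_K K\gen{j_i}\in\{0,3\}$; if either is $0$, then that $j_i$ is algebraic over $K$ and we are done. So assume both $j_1$ and $j_2$ have transcendence degree $3$ over $K$, each a $K$-generic solution of $(\star)$. The hypothesis $\td_K K(j_1, j_1', j_1'', j_2, j_2', j_2'') < 6$ then forces, via strong minimality, that $(j_2, j_2', j_2'')$ is algebraic over $K(j_1, j_1', j_1'')$, with joint transcendence degree exactly $3$ over $K$. In other words, the generic types of $j_1$ and $j_2$ over $K$ are non-orthogonal, and because the solution set of $(\star)$ is geometrically trivial by Corollary \ref{trivHS}, Fact \ref{nonewparam} together with the hypothesis that $K$ has no new constants allows the descent: one obtains a nonzero differential-algebraic relation between $(j_1,j_1',j_1'')$ and $(j_2,j_2',j_2'')$ defined over $\CC\gen{t}=\CC(t)$.

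The remaining and main task is to upgrade this $\CC(t)$-differential-algebraic relation to an honest polynomial relation $P(j_1,j_2)=0$ with coefficients in $\CC$. The plan is to use Lemma \ref{fct1} to realize $j_1=j_\Gamma(g_1t)$ and $j_2=j_\Gamma(g_2t)$ as meromorphic functions and, via the $PSL_2(\CC)$-symmetry of $(\star)$ (formula (\ref{homography}) from the proof of Theorem \ref{juststrmin}), make the change of variables $s=g_1t$ to reduce to studying an algebraic relation between $j_\Gamma(s)$ and $j_\Gamma(gs)$ for $g:=g_2g_1^{-1}\in GL_2(\CC)$. The main obstacle I expect is showing that such a relation forces $g\in\text{Comm}(\Gamma)$: a differential-algebraic dependence of $j_\Gamma(s)$ and $j_\Gamma(gs)$ over $\CC(s)$ should imply that the automorphic function fields attached to $\Gamma$ and to $g\Gamma g^{-1}$ share a nontrivial algebraic intersection, and by the classical theory of Fuchsian groups and $\text{Comm}(\Gamma)$-correspondences recalled in Section \ref{basic} this forces $\Gamma\sim g\Gamma g^{-1}$, hence $g\in\text{Comm}(\Gamma)$. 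Once this is achieved, the $\text{Comm}(\Gamma)$-correspondence polynomial $\Psi_{\tilde g}(X,Y)\in\CC[X,Y]$ vanishes on $(j_1,j_2)$ and supplies the required $P$. To rule out spurious differential-algebraic dependences that fail to descend to a polynomial identity in $(y_1,y_2)$ alone, I would combine the $PSL_2(\CC)$-invariance argument from the proof of Theorem \ref{juststrmin} with the Picard-Vessiot/monodromy analysis of Section \ref{Liouville}, in particular the Zariski density of $\pi^{-1}(\Gamma)$ in $SL_2(\CC)$ established there.
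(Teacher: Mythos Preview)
Your initial reductions are correct and match the paper: strong minimality forces $\td_K K\gen{j_i}\in\{0,3\}$, and if both are $3$ then geometric triviality together with Fact~\ref{nonewparam} lets you descend the relation to $\CC(t)$. The paper does exactly this in its first paragraph.

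The gap is in your ``main task.'' You propose to realize $j_1=j_\Gamma(g_1t)$, $j_2=j_\Gamma(g_2t)$, set $g=g_2g_1^{-1}$, and then argue that a differential-algebraic dependence of $j_\Gamma(s)$ and $j_\Gamma(gs)$ over $\CC(s)$ forces the automorphic function fields for $\Gamma$ and $g\Gamma g^{-1}$ to share a nontrivial algebraic piece, hence $g\in\text{Comm}(\Gamma)$. But this inference is precisely what is not available to you. A relation in $\CC(t)[y_1,y_1',y_1'',y_2,y_2',y_2'']$ is not a relation between the automorphic function fields $\CC(j_\Gamma)$ and $\CC(j_\Gamma\circ g)$; the derivatives and the explicit $t$ are in the way, and under $t\mapsto\gamma t$ for $\gamma\in\Gamma$ the derivatives transform by cocycles, so the argument behind Lemma~\ref{nonalg} (which works for polynomial relations $P(j_\Gamma(t),j_\Gamma(gt))=0$) does not transfer. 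Your closing sentence invokes ``the $PSL_2(\CC)$-invariance argument from Theorem~\ref{juststrmin}'' and ``Zariski density of $\pi^{-1}(\Gamma)$,'' but neither of these by itself eliminates the derivatives or the $t$-dependence; you have not said what replaces the missing step.

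For comparison, the paper does not try to identify $g$ at all in this theorem. It works directly with the prime ideal $I\subset L=\CC(t)(y_1,y_1',y_1'')\otimes_{\CC(t)}\CC(t)(y_2,y_2',y_2'')$ cutting out the relation, and shows $I$ is stable under the \emph{diagonal} $\mathfrak{psl}_2(\CC)$-action. The two ``verticalized'' copies of $\mathfrak{psl}_2$ on $L/I$ commute with $D$ and are bases of the same $3$-dimensional space of $K$-derivations, so they differ by an automorphism of $\mathfrak{psl}_2$, necessarily inner by some $g=\begin{pmatrix}1&a\\0&1\end{pmatrix}$ since $X_1=X_2$ (this uses that $I$ is generated over $\CC$, from Fact~\ref{nonewparam}). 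A Puiseux-series computation in $1/z$ with $z=y_1''/(y_1')^2$ then forces $a=0$. Once the two $\mathfrak{psl}_2$-actions coincide, both $y_1$ and $y_2$ lie in the common kernel $N$, and a transcendence-degree count gives $\td_\CC N=1$, producing the polynomial $P(y_1,y_2)$ over $\CC$. The identification of $P$ with a $\text{Comm}(\Gamma)$-correspondence comes \emph{afterwards}, in Lemmas~\ref{nonalg} and~\ref{isalg}; it is not part of the proof of Theorem~\ref{2relations} and you should not try to build it in.
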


The group $PSL_2(\m C)$ acts on pairs of solutions by precomposition. We will prove that the ideal of differential relations between $(j_1, j_2)$ is stable under this action.


\begin{proof} 
From Theorem \ref{juststrmin}, it follows that if
$\td _{K} K (j_1, j_1', j_1'' , j_2, j_2', j_2'') <6$ then it is either $0$ or $3$. But it follows from Fact \ref{nishioka1} and Lemma \ref{fct1} that if both $j_1$ and $j_2$ are not algebraic over $K$ then $\td_{K} K(j_1, j_1', j_1'',j_2,j_2',j_2'')=3.$ By Fact \ref{nonewparam}, we can assume that $K=\mathbb C(t)$ and so throughout $K=\mathbb C(t)$ and $\partial=\frac{\partial}{\partial t}$.

We proceed as in the proof of Theorem \ref{juststrmin}. For $i=1,2$ consider $K(y_i, y_i',y_i'')$, equipped with the derivation
\begin{itemize}
\item $D_i =  \partial + y_i' \frac{\partial}{\partial y_i} + y_i'' \frac{\partial}{\partial y'_i} +  \left( \frac{3}{2}\frac{y_i''^2}{y_i'} -(y_i')^3 R_{\Gamma}(y_i) \right) \frac{\partial}{\partial y''_i}$
\end{itemize}

One defines an action of $\mathfrak{psl}_2(\m C)$ by : 

\begin{itemize}
\item $ X_i = \partial$
\item $ H_i = t\partial - y_i'\frac{\partial}{\partial y_i'} -2 y_i'' \frac{\partial}{\partial y_i''}$
\item $ Y_i = \frac{t^2}{2}\partial -ty_i'\frac{\partial}{\partial y_i'} - (2ty_i'' +y_i')\frac{\partial}{\partial y_i''}$
\end{itemize}

We have that $[X_i, H_i] =  X_i$, $[H_i, Y_i]= Y_i$, $[X_i,  Y_i] = H_i$, and
$[D_i, X_i] = 0$, $[D_i,  H_i] = D_i$, $[D_i, Y_i] = t D_i$.

As explained in the proof of Theorem \ref{juststrmin}, the above action of $\mathfrak{psl}_2(\mathbb C)$ is the infinitesimal action of $PSL_2(\m C)$ on $\mathbb C(t,y_i,y'_i, y''_i)$. We ``verticalize'' this action by considering $X^v_i = X_i - D_i$,  $H^v_i = H_i - tD_i$ and $Y^v_i = Y_i - \frac{t^2}{2}D_i$. Now $\CC X^v_i + \CC H^v_i + \CC Y^v_i$ is a realization of $\mathfrak{psl}_2(\CC)$ acting $K$-linearly and commuting with $D_i$.


The ideal of the polynomial differential relations between $j_1$ and $j_2$ over $K$ is an ideal in $K[y_1, y_1', \ldots, y_2, y'_2, \ldots]$. Let $J$ be the differential ideal generated by the third order differential equations satisfied by $j_1$ and by $j_2$ and $K[y_1, y_1', \ldots, y_2, y'_2, \ldots] \to K(y_1,y'_1,y''_1) \otimes _{K} K(y_2,y'_2,y''_2)$ be the quotient by $J$ followed by localizations. 

As $j_1$ and $j_2$ do not satisfy any lower order differential equations this ideal is the preimage of an ideal $I$ of $L = K(y_1,y'_1,y''_1) \otimes _{K} K(y_2,y'_2,y''_2)$ stable by 
 \begin{multline}
 D^{(2)} = \partial + y_1' \frac{\partial}{\partial y_1} + y_2' \frac{\partial}{\partial y_2} + y_1'' \frac{\partial}{\partial y'_1} + y_2'' \frac{\partial}{\partial y'_2} \\ 
 + \left( \frac{3}{2}\frac{y_1''^2}{y_1'} - (y_1')^3  R_{\Gamma}(y_1)\right) \frac{\partial}{\partial y''_1} + \left( \frac{3}{2}\frac{y_2''^2}{y_2'} - (y_2')^3 R_{\Gamma}(y_2) \right)\frac{\partial}{\partial y''_2}.
 \end{multline}
    
    The ideal $I$ is the kernel of the evaluation in $(j_1, j_2)$ with values in a field of meromorphic function thus it is prime. From geometric triviality, the subfield of constants of $F = \text{Frac}(L/I)$ with respect to the derivation $D^{(2)}$ is $\mathbb C$.

    \medskip
    
{\it  We claim that $I$ is stable under the diagonal action of $\mathfrak{psl}_2$.  } 
 \medskip
 
The algebra $L/I$ is an algebraic extension of $K(y_1,y'_1,y''_1)$ and of $K(y_2,y'_2,y''_2)$, and as usual, $D_1$, $X_1$, $H_1$, $Y_1$, $D_2$, $X_2$, $H_2$, $Y_2$ and their "verticalization" will also denote their unique extensions to $L/I$.

\begin{lem}
On $L/I$ we have $D_1 = D^{(2)} = D_2$. 
\end{lem}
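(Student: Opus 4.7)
The plan is to reduce the equality of derivations on $L/I$ to the uniqueness of extension of a derivation to an algebraic extension, once I have verified that $L/I$ is algebraic over each of the subfields $K(y_i, y'_i, y''_i)$.

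First, I would pin down the transcendence degree of $F := \Frac(L/I)$ over $K = \mathbb C(t)$. We are in the case of the proof of Theorem \ref{2relations} where neither $j_1$ nor $j_2$ is algebraic over $K$, so by the strong minimality established in Theorem \ref{juststrmin} we have $\td_K K\gen{j_i} = 3$ for $i=1,2$. Combining this with the hypothesis $\td_K K(j_1, j'_1, j''_1, j_2, j'_2, j''_2) < 6$ and the strong minimality dichotomy (via Fact \ref{nishioka1} and Lemma \ref{fct1}, as recorded in the opening paragraph of the proof) forces $\td_K K(j_1, j'_1, j''_1, j_2, j'_2, j''_2) = 3$. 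Since $F$ is the image of evaluation at $(j_1,j_2)$, we obtain $\td_K F = 3$. The image of $K(y_i, y'_i, y''_i)$ in $F$ equals $K(j_i, j'_i, j''_i)$, which already has transcendence degree $3$ over $K$. Hence $F$ is algebraic over this image for each $i=1,2$.

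Next, I would invoke the standard uniqueness of extension of a derivation from a field to its algebraic closure. The derivation $D_i$ on $K(y_i, y'_i, y''_i)$ admits a unique extension to the algebraic extension $F$. On the other hand, $D^{(2)}$ preserves $I$ by construction of $I$ as the defining ideal of $(j_1,j_2)$ for differential operators built from the Schwarzian equations satisfied by each $j_i$, so $D^{(2)}$ descends to $L/I$ and extends to $F$. A direct inspection of the defining formulas shows that $D^{(2)}$ and $D_i$ agree on the subfield $K(y_i, y'_i, y''_i)$: both send $t \mapsto 1$, $y_i \mapsto y'_i$, $y'_i \mapsto y''_i$, and $y''_i \mapsto \tfrac{3}{2}\tfrac{(y''_i)^2}{y'_i} - (y'_i)^3 R_\Gamma(y_i)$, and both annihilate $\mathbb C$. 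By uniqueness of extension to the algebraic extension $F$, we conclude $D^{(2)} = D_i$ on all of $F$, hence on $L/I$. Applying this for $i=1$ and $i=2$ yields $D_1 = D^{(2)} = D_2$.

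The main obstacle is the transcendence degree argument underlying the first step; once $\td_K F = 3$ is in hand, the algebraicity of $F$ over each $K(y_i,y'_i,y''_i)$ is automatic and the remainder is a routine application of the uniqueness of derivations on algebraic extensions. One should also take care that the symbols $D_1$ and $D_2$ in the statement refer tacitly to their unique extensions from $K(y_i,y'_i,y''_i)$ to $L/I$, in line with the convention the authors fix just before the lemma; without the algebraicity established in step one these extensions would not even be well-defined.
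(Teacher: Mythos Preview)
Your proof is correct and follows essentially the same approach as the paper. The paper's proof is terser: it simply notes that the restriction of $D^{(2)}$ to the subalgebra $K(y_i,y'_i,y''_i)$ coincides with $D_i$ by inspection of the formulas, and then invokes uniqueness of extension to the algebraic extension $L/I$; the algebraicity of $L/I$ over each $K(y_i,y'_i,y''_i)$ that you carefully rederive is recorded by the authors in the sentence immediately preceding the lemma.
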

\begin{proof}
Restrict the derivation $D^{(2)}$ of $L/I$ to its subalgebra $L_1$. The definition of $D^{(2)}$ gives that this restriction is $D_1$. Now, the extension to the algebraic extension $L/I$ of $L_1$ is unique then $D^{(2)} = D_1$. Same argument gives $D^{(2)} = D_2$.
\end{proof}

{\it So we will just write this derivation as $D$. }

\begin{lem}\label{lem6}
There exists $a\in \CC$ such that, on $L/I$, $X_1 = X_2$, $H_1 = H_2 + a(X_2-D)$ and $Y_1= Y_2 +a (H_2 -tD) + \frac{a^2}{2}(X_2-D)$.
\end{lem}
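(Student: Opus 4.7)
My plan is to view the two triples $\{X_i^v, H_i^v, Y_i^v\}$ (where $X_i^v := X_i - D_i$, $H_i^v := H_i - tD_i$, $Y_i^v := Y_i - \tfrac{t^2}{2}D_i$) as two faithful copies of $\mathfrak{psl}_2(\CC)$ sitting inside $\mathrm{Der}_K^D(F)$, the $\CC$-vector space of $K$-linear $D$-commuting derivations of $F := \mathrm{Frac}(L/I)$, and then to show they are conjugate by an element of $PSL_2(\CC)$ that fixes $X$. The first identity $X_1 = X_2$ on $L/I$ is immediate: $X_1 = \partial = X_2$ already on $L$, and combined with $D_1 = D_2 = D$ on $L/I$ from the previous lemma, this also gives $X_1^v = X_2^v$; I will write $X^v$ for this common derivation. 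Since $F$ is algebraic over each $L_i = K(y_i, y_i', y_i'')$, each of $X_i^v, H_i^v, Y_i^v$ extends uniquely from $L_i$ to $F$, and uniqueness guarantees that the extensions still commute with $D$ and still satisfy the $\mathfrak{psl}_2$-bracket relations inherited from $L_i$.

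The next step is a dimension count establishing $\dim_\CC \mathrm{Der}_K^D(F) = 3$. For the upper bound, any $F$-linear relation $\sum a_j \delta_j = 0$ among $D$-commuting $K$-linear derivations, after applying $[D,-]$, gives $\sum (Da_j)\delta_j = 0$, forcing each $a_j$ into $F^D = \CC$ (using the identity $F^D = \CC$ from geometric triviality, already recorded in this proof). So $\CC$-linear independence in $\mathrm{Der}_K^D(F)$ coincides with $F$-linear independence, yielding $\dim_\CC \mathrm{Der}_K^D(F) \le \dim_F \mathrm{Der}_K(F) = \mathrm{tr.deg}_K F = 3$. For the lower bound, each triple is $\CC$-linearly independent, the $\mathfrak{psl}_2$-representation on $L_i$ being faithful, so both triples form $\CC$-bases of the same 3-dimensional space.

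The change of basis from $(X_1^v, H_1^v, Y_1^v)$ to $(X_2^v, H_2^v, Y_2^v)$ is therefore a Lie-algebra automorphism of $\mathfrak{psl}_2(\CC)$; by simplicity every such automorphism is inner and equals $\mathrm{Ad}(g)$ for some $g \in PSL_2(\CC)$. The constraint $\mathrm{Ad}(g) X = X$ forces $g$ into the stabilizer of $X$ under the adjoint action, namely the one-parameter unipotent subgroup $\{\exp(-aX) : a \in \CC\}$. Writing $g = \exp(-aX)$ and using the commutation relations $[X, H] = X$, $[X, Y] = H$, $[H, Y] = Y$, a direct computation will give $\mathrm{Ad}(\exp(-aX)) H = H - aX$ and $\mathrm{Ad}(\exp(-aX)) Y = Y - aH + \tfrac{a^2}{2} X$. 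Substituting into $H_2^v = \mathrm{Ad}(g) H_1^v$ and $Y_2^v = \mathrm{Ad}(g) Y_1^v$, solving for $H_1^v$ and $Y_1^v$, and translating back via $H_i = H_i^v + tD$ and $X_i = X_i^v + D$, will yield exactly the three stated formulas. The principal subtlety I expect is the dimension equality $\dim_\CC \mathrm{Der}_K^D(F) = 3$, which rests critically on the geometric-triviality consequence $F^D = \CC$ and on the fact that unique extension of derivations through the algebraic extension $F/L_i$ preserves both brackets and $D$-commutativity; once this is in hand, everything else is elementary Lie theory.
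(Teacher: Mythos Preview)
Your approach is essentially the same as the paper's, but there is a genuine gap at the step where you claim $X_1 = X_2$ on $L/I$ is ``immediate'' because $X_1 = \partial = X_2$ ``already on $L$''. Recall the convention in force: $X_i$ is defined on $L_i = K(y_i, y_i', y_i'')$ and then \emph{uniquely extended} to $F = \mathrm{Frac}(L/I)$ via the algebraic extension $F/L_i$. Even though both restrict to $\partial$ on $K$, the extension $X_1$ acts on $y_2$ according to the minimal polynomial of $y_2$ over $L_1$, and there is no a priori reason that $X_1(y_2) = 0$. Equivalently, if you prefer to define $X_1 = X_2 = \partial_t$ globally on the tensor product $L$, you still need $I$ to be $\partial_t$-stable for this to descend to $L/I$, and $\partial_t$-stability of $I$ is exactly what is not automatic. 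The paper fills this by invoking Fact~\ref{nonewparam}: since the equation is geometrically trivial, nonorthogonality is witnessed without new parameters, so $I$ is generated by $I \cap \CC(y_1, y_1', y_1'') \otimes \CC(y_2, y_2', y_2'')$, which is annihilated by $\partial_t$. You invoke geometric triviality for $F^D = \CC$ but not for this second, independent consequence; without it the automorphism $\mathrm{Ad}(g)$ you obtain need not fix $X$, and the specific unipotent form of the lemma does not follow.

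Once $X_1 = X_2$ is properly established, the remainder of your argument matches the paper's. Your dimension count $\dim_\CC \mathrm{Der}_K^D(F) = 3$ is a clean repackaging of the paper's computation that the change-of-basis matrix $A \in M_3(F)$ satisfies $D(A) = 0$ and hence lies in $GL_3(\CC)$; both rest on the same ``a minimal $F$-linear relation among $D$-commuting derivations has constant coefficients'' observation. The identification of the basis change as an inner automorphism $\mathrm{Ad}(g)$ and the pinning down of $g$ as unipotent via $X_1^v = X_2^v$ are exactly as in the paper.
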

 
 \begin{proof}
 Using Fact \ref{nonewparam}, we have that any algebraic relations between $j_1$ and $j_2$ (together with derivatives) can be defined over $\m C$. Hence $I$ is generated by $I \cap \mathbb C (y_1, y_1',y_1'') \otimes \mathbb C (y_2, y_2',y_2'')$. Then, on $L/I$, both $X_1$ and $X_2$ coincide with $\frac{\partial}{\partial t}$, this proves $X_1 = X_2$.
 
The two triples $X_1^v$, $H^v_1$, $Y^v_1$ and $X_2^v$, $H^v_2$, $Y^v_2$ are two bases of the derivations of $F=\text{Frac}(L/I)$ over $K$. Let $A$ be the matrix with coefficient in $F$ such that $(X_1^v,H^v_1,Y^v_1) =(X_2^v,H^v_2,Y^v_2)A$. From the bracket with $D$ one gets $0 = [D,(X_1^v,H^v_1,Y^v_1) ] = (X_2^v,H^v_2,Y^v_2)D(A)$. So the coefficients of $A$ are constant. 

Now the two triples are basis of two realisations of $\mathfrak{psl}_2 (\mathbb C)$ with the same structure constants, then $A$ is an automorphism of the Lie algebra $\mathfrak{psl}_2 (\mathbb C)$. All automorphisms of $\mathfrak{psl}_2 (\mathbb C)$ are inner (see \cite[Proposition 14.21]{SagleWalde}), thus there exists a $g\in PSL_2(\mathbb C)$ such that ${\rm Ad}(g) = A$. This automorphism fixes $X_1$, hence there exists $a\in \mathbb C$ such that $g= \begin{pmatrix} 1 & a \\ 0 & 1 \end{pmatrix}$. Then
 \[
 (X_1^v,H^v_1,Y^v_1) =(X_2^v,H^v_2,Y^v_2) \begin{pmatrix} 1 & a & \frac{a^2}{2} \\ 0 & 1 & a \\ 0 & 0 & 1\end{pmatrix}
 \]
 
This proves the lemma.
 \end{proof}
 
In $F$, $y_2$ is an algebraic function over $\mathbb C(t,y_1,y'_1,y''_1)$ satisfying $X_2(y_2) = H_2(y_2) = Y_2(y_2) = 0$. Hence, using Lemma \ref{lem6}, one easily computes that 
\begin{equation}\label{X} 
X_1(y_2)=0,
\end{equation}
\begin{equation}\label{H}
H_1(y_2) = -a D(y_2),
\end{equation}
\begin{equation}\label{Y}
Y_1(y_2) = (-at -\frac{a^2}{2} )D(y_2) = (t+\frac{a}{2})H_1(y_2). 
\end{equation}

We will prove that this system of partial differential equations over $\mathbb C(t,y_1,y'_1,y''_1)$ has an algebraic solution if and only if $a=0$. For contradiction, assume not.
We expand $y_2$ as a Puiseux series in $1/z$ with $z = \frac{y_1''}{y_1'^2} $, that is we think of $y_2$ as being an element of $\mathbb C(t,y_1,y'_1)^{alg}\gen{\gen{\frac{1}{z}}}$:
\[
y_2 = \sum_{\lambda\leq n} A_\lambda (t,y_1,y_1') z^\lambda.
\] 

In the coordinates $t,y_1, y_1', z$, one has
\begin{itemize}
\item $ X_1 = \frac{\partial}{\partial t}$,
\item $H_1 = t\frac{\partial}{\partial t} -    y'_1\frac{\partial}{\partial y'_1}$,
\item $Y_1 = \frac{t^2}{2}\frac{\partial}{\partial t} - ty'_1\frac{\partial}{\partial y'_1} -\frac{1}{y_1'}\frac{\partial}{\partial z}$,
\item $D = \frac{\partial}{\partial t} - y'_1\frac{\partial}{\partial y_1} + z(y_1')^2\frac{\partial}{\partial y_1'} -\left(\frac{1}{2}z^2y_1' + R_\Gamma(y_1)y_1' \right)\frac{\partial}{\partial z}$.
\end{itemize}
The induced continuous action of $X_1$ on $\mathbb C(t,y_1,y'_1)^{alg}\gen{\gen{\frac{1}{z}}}$
gives

\begin{equation}\label{puiseuxX}
X_1(y_2)=\sum_{\lambda \leq n} \frac{\partial A_\lambda}{\partial t} z^\lambda,
\end{equation}
The equations \ref{X} and \ref{puiseuxX} give for all $\lambda$, $\frac{\partial A_\lambda}{\partial t} = 0$.
Then by direct computation one gets
\begin{equation}\label{puiseuxH}
H_1(y_2) = \sum_{\lambda \leq n} - y_1'\frac{\partial A_\lambda}{\partial y_1'} z^\lambda,
\end{equation}
\begin{equation} \label{puiseuxY}
Y_1(y_2) = \sum_{\lambda \leq n} -ty'_1\frac{\partial A_\lambda}{\partial y'_1}(z)^\lambda  - \lambda A_\lambda \frac{1}{y'_1}z^{\lambda-1},
\end{equation}
\begin{equation}\label{puiseuxD}
D(y_2) = \sum_{\lambda \leq n} R_\Gamma(y_1) y'_1 \lambda A_\lambda z^{\lambda-1}+ y'_1\frac{\partial A_\lambda}{\partial y_1} z^\lambda + \left( (y'_1)^2\frac{\partial A_\lambda}{\partial y'_1} - \frac{1}{2}y'_1 \lambda A_\lambda \right)z^{\lambda +1}.
\end{equation}

\begin{lem}\label{Hy}
If $y_2$ is an algebraic solution of \ref{X}, \ref{H}, \ref{Y} then $H_1(y_2)=0$.
\end{lem}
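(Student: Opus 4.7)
The plan is to analyze the Puiseux expansion $y_2 = \sum_{\lambda \leq n} A_\lambda(y_1, y_1') z^\lambda$ and show, by descent from the top of the Puiseux support, that every coefficient $A_\lambda$ must be independent of $y_1'$; in view of the identity $H_1(y_2) = \sum_\lambda -y_1' \partial_{y_1'} A_\lambda \cdot z^\lambda$ already computed, this is equivalent to $H_1(y_2)=0$.

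First I would use equation \ref{Y}, rewritten as $Y_1(y_2) - t\, H_1(y_2) = \tfrac{a}{2} H_1(y_2)$, and match coefficients of $z^\mu$, obtaining the recurrence
\[ (\mu+1) A_{\mu+1} \;=\; \tfrac{a}{2}(y_1')^2\, \partial_{y_1'} A_\mu. \]
Taking $\mu = -1$ forces $\partial_{y_1'} A_{-1} = 0$ (when $a \ne 0$; the case $a=0$ follows directly from \ref{H}), and for each $\mu$ in the top block of the Puiseux support (where $A_{\mu+1}=0$) it yields $\partial_{y_1'} A_\mu = 0$. Arguing by contradiction, assume some $A_\lambda$ depends on $y_1'$ and let $\lambda_0$ be the largest such index: then $\lambda_0 \ne -1$, the coefficient $A_{\lambda_0+1}$ is $y_1'$-independent, and the recurrence pins $A_{\lambda_0}$ down to the form $-\tfrac{2(\lambda_0+1) A_{\lambda_0+1}(y_1)}{a\, y_1'} + B(y_1)$ for some $B$. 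Now match the coefficient of $z^{\lambda_0}$ in \ref{H} rewritten as $H_1(y_2) + a D(y_2) = 0$, eliminate $A_{\lambda_0+1}$ and $\partial_{y_1'} A_{\lambda_0 - 1}$ via the recurrence, and extract the coefficient of $(y_1')^{-2}$: a direct computation gives $(\lambda_0+1)(\lambda_0+2)\, A_{\lambda_0+1}(y_1) = 0$, forcing $\lambda_0 = -2$.

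Finally, I would iterate the analysis at $\mu = -3, -4, \ldots$, propagating the $1/y_1'$-expansions downward through \ref{H} at each level; each step is expected to add a further negative power of $y_1'$ to $A_\mu$. Since $y_2$ is algebraic over $\mathbb{C}(y_1, y_1', z)$ (equivalently, over $\mathbb{C}(y_1, y_1', y_1'')$), its Puiseux coefficients $A_\lambda$ must have bounded pole order at $y_1' = \infty$, producing the desired contradiction. The main obstacle is verifying that this iteration genuinely produces unbounded pole order with no miraculous cancellations at intermediate levels: one must track an explicit closed form for the leading $1/y_1'$-behaviour of $A_{-2-k}$. The Riccati obstruction of Condition \ref{Ric} is expected to enter here, since the PDE $(H_1 + a D) y_2 = 0$ projects (modulo $X_1$) to a vector field on $(y_1, y_1', z)$ whose characteristics in $(y_1, z)$ are governed by $\frac{dz}{dy_1} = \frac{z^2}{2} + R_\Gamma(y_1)$, which is, up to the change $z=-2u$, the Riccati equation $(\star\star)$ whose algebraic solvability is ruled out by Condition \ref{Ric}.
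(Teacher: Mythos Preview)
Your proposal is explicitly incomplete: you flag the final iteration as ``the main obstacle'' and leave it unverified, and the obstruction you reach for is not there. There is no reason the Puiseux coefficients $A_\lambda$ of a function algebraic over $\mathbb{C}(y_1,y_1',z)$ should have pole order in $y_1'$ bounded \emph{uniformly} in $\lambda$; each individual $A_\lambda$ is algebraic over $\mathbb{C}(y_1,y_1')$, but nothing bounds these orders as $\lambda\to-\infty$. So the growth of negative powers of $y_1'$ in $A_{-k}$ that your iteration would produce contradicts nothing. The speculation that Condition~\ref{Ric} must enter is also misdirected: the paper's proof of this lemma makes no use of the Riccati obstruction; only the relations \eqref{X}, \eqref{H}, \eqref{Y} and algebraicity of $y_2$ are used.

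The paper's mechanism is different and short. It rests on one elementary fact you did not invoke: if $f$ is algebraic over $\mathbb{C}(y_1,y_1')$, then $\partial_{y_1'}f$ can never have a \emph{simple} pole at $y_1'=0$, since a nonzero residue would force a $\log y_1'$ term in any local primitive. After establishing (as you do) that the top exponent is $0$ and the support lies in $-\mathbb{N}$, one takes $k_0\geq 1$ minimal with $A_{-k_0}\neq 0$. The recurrence at level $k_0-1$ shows $A_{-k_0}$ is independent of $y_1'$; the recurrence at level $k_0$ then presents $\partial_{y_1'}A_{-k_0-1}$ as a nonzero multiple of $A_{-k_0}(y_1)/y_1'$. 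The simple-pole obstruction forces $A_{-k_0}=0$, contradicting the choice of $k_0$. Hence $A_{-k}=0$ for all $k\geq 1$, and since $\partial_{y_1'}A_0=0$ one reads off $H_1(y_2)=0$ directly from \eqref{puiseuxH}. There is no need to track closed forms level-by-level, no unbounded-growth argument, and no appeal to equation \eqref{H} beyond fixing the top exponent.

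One caveat worth flagging: your recurrence $(\mu+1)A_{\mu+1}=\tfrac{a}{2}(y_1')^{2}\,\partial_{y_1'}A_\mu$ is what \eqref{puiseuxH} and \eqref{puiseuxY} literally give, and it puts $A_{-k}/(y_1')^{2}$ rather than $A_{-k}/y_1'$ on the right-hand side; the paper's printed form of the recurrence differs from this by a factor of $y_1'$. The simple-pole argument as written in the paper relies on the $1/y_1'$ form, so this discrepancy is worth keeping in mind --- but the intended mechanism is the residue obstruction, not pole-order growth or the Riccati condition.
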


\begin{proof}
If $a=0$ there is nothing to prove. Assume it is not. 
We have already seen that $\frac{\partial A_\lambda}{\partial t}= 0$. 
Let $q\in \mathbb Q$ be such that $A_q \not = 0$. One can assume that $q$ is maximal among the elements $q' \in q + \mathbb Z$ such that $A_{q'} \not = 0$. From \ref{Y}, one gets $-ty'_1 \frac{\partial A_q}{\partial y'_1} = (t+\frac{a}{2})\left( -y'_1 \frac{\partial A_q}{\partial y'_1}\right)$ and then $\frac{\partial A_q}{\partial y'_1} = 0$. Now \ref{H} gives  $(y'_1)^2\frac{\partial A_q}{\partial y'_1} - \frac{1}{2}y'_1 q A_q = 0$, this implies $q=0$ so that $n=0$ and the range of $\lambda$ is $-\mathbb N$.

The equation \ref{Y} can be written as: $\forall\ k \in \mathbb N$,
\[
\tag{\ref{Y} ($k$)}\frac{a}{2} \frac{\partial A_{-k-1}}{\partial y'_1} = k \frac{A_{-k}}{y'_1}
\]

Let $k_0$ be the maximal integer such that for all strictly positive $k$ smaller than $k_0$, $A_{-k}=0$.
The equality \ref{Y}\,($0$) gives that $A_{-1}$ does not depend on $y'_1$. Then \ref{Y}\,($1$) is an equality between a derivative of an algebraic function in $y_1'$ and and rational function with a simple pole at $0$. This implies that the latter is identically zero: $k_0$ is greater than $2$.

Now if $k_0$ is finite then \ref{Y}\,($k_0-1$) is $\frac{\partial A_{-k_0}}{\partial y'_1} = 0$ and \ref{Y}\,($k_0$) is $\frac{\partial A_{-k_0-1}}{\partial y'_1} =\frac{A_{-k_0}}{y'_1}$.
As a derivative of an algebraic function can not have simple pole, $A_{-k_0}=0$ which contradicts the existence of $k_0$.

Then \ref{puiseuxH} proves the lemma.
\end{proof}

If $a\not = 0$ Lemma \ref{Hy} and the equation \ref{H} show that $D(y_2) = 0$. But the subfield of constants of $D$ in $F$ is $\mathbb C$ and $y_2$ is not contant. This contradicts the assumption on $a$ and one gets $a=0$.

Now, on $F$, $X_1=X_2$, $H_1=H_2$ and $Y_1 = Y_2$. These three derivations are linearly independent and their kernel is denoted by $N$. Formulas for these derivations give $y_1 \in N$ and $y_2 \in N$.

The sequence of extensions
$
\mathbb C \subset N \subset F
$ is such that $\td_{\mathbb C} N \geq 1$, $ \td_N F\geq 3 $ and $\td_\CC  F =4 $ then the transcendence degree of $N$ over $\mathbb C$ is $1$. This proves that $I$ contains some nonzero $P \in \mathbb C[y_1, y_2]$. It is not difficult to see that $P$ generates $I$ as a $D$-ideal.
\end{proof}
\begin{rem}
It is not hard to see that Theorem \ref{2relations} also holds for all general Schwarzian equations $(\star')$ provided that they are strongly minimal (and so geometrically trivial). Indeed the above proof did not use the fact that Fuchsian groups are involved. In particular, Theorem \ref{2relations} holds if Condition \ref{Ric} is true of the corresponding Riccati equations. 
\end{rem}

It now remains to understand the kind of polynomials $P \in \mathbb C[y_1, y_2]$ that can occur. Notice that if $P(j_\Gamma(g_1t),j_\Gamma(g_2t))=0$ gives an algebraic relation between two solutions $j_\Gamma(g_1t)$ and $j_\Gamma(g_2t)$, then there trivially is an algebraic relation between $j_\Gamma(t)$ and $j_\Gamma(g_2g_1^{-1}t)$, namely $P(j_\Gamma(t) , j_\Gamma(g_2g_1^{-1}t) )=0$. So it suffices to characterize interalgebraicity with $j_\Gamma(t)$.
 
 

 \begin{lem} \label{nonalg} For $g_1 \notin {\rm Comm}( \Gamma ) ,$ $j_\Gamma(t) $ is algebraically independent from $j_{\Gamma } (gt)$ over $\m C$. 
 \end{lem}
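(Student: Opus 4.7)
The plan is to prove the contrapositive: if $j_\Gamma(t)$ and $j_\Gamma(g_1 t)$ are algebraically dependent over $\m C$, then $g_1 \in \text{Comm}(\Gamma)$. Since each of these two meromorphic functions on $\mathbb{H}$ has transcendence degree one over $\m C$, such a dependence amounts to a nonzero polynomial relation $P(j_\Gamma(t), j_\Gamma(g_1 t)) \equiv 0$ with $P \in \m C[X,Y]$; in particular, $j_\Gamma(g_1 t)$ is a root of $P(j_\Gamma(t), Y) \in \m C(j_\Gamma(t))[Y]$, a polynomial of finite degree in $Y$.

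The central step is a coset-counting argument. For any $\gamma \in \Gamma$, substituting $\gamma t$ for $t$ in the relation and using the $\Gamma$-invariance of $j_\Gamma$ gives $P(j_\Gamma(t), j_\Gamma(g_1 \gamma t)) = 0$; hence each $j_\Gamma(g_1 \gamma t)$ is one of the finitely many roots of $P(j_\Gamma(t), Y)$. An identity $j_\Gamma(g_1 \gamma_1 t) \equiv j_\Gamma(g_1 \gamma_2 t)$ on $\mathbb{H}$ means that for each $t$ there exists $\delta(t) \in \Gamma$ with $\delta(t) g_1 \gamma_1 t = g_1 \gamma_2 t$; the discreteness of $\Gamma$ together with the holomorphy of both sides forces $\delta(t)$ to be constant, yielding the group-theoretic identity $g_1 \gamma_2 \gamma_1^{-1} g_1^{-1} \in \Gamma$, i.e.\ $\gamma_2 \gamma_1^{-1} \in \Gamma \cap g_1^{-1} \Gamma g_1$. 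Consequently $[\Gamma : \Gamma \cap g_1^{-1} \Gamma g_1] < \infty$.

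To finish, I would exchange the two coordinates: the substitution $s = g_1 t$ rewrites the original relation as $P(j_\Gamma(g_1^{-1} s), j_\Gamma(s)) = 0$, so applying the previous argument with $g_1$ replaced by $g_1^{-1}$ shows that $\Gamma \cap g_1 \Gamma g_1^{-1}$ has finite index in $\Gamma$; conjugating by $g_1$ yields $[g_1^{-1} \Gamma g_1 : \Gamma \cap g_1^{-1} \Gamma g_1] < \infty$. The two finite-index conditions together are precisely the definition of $g_1 \in \text{Comm}(\Gamma)$, contradicting the hypothesis. The only delicate point, I expect, is the rigorous passage from pointwise equality of meromorphic functions on $\mathbb{H}$ to an honest identity in $PSL_2(\m R)$, which, while not difficult, relies on the discreteness of $\Gamma$ and the rigidity of fractional linear transformations.
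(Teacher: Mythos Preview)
Your argument is correct and shares the paper's core idea: the polynomial relation $P$ together with $\Gamma$-invariance of $j_\Gamma$ forces finitely many right $\Gamma$-translates $j_\Gamma(g_1\gamma t)$ to coincide as functions, which translates into a finite-index statement for $\Gamma\cap g_1^{-1}\Gamma g_1$ in $\Gamma$.

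The presentations differ in two minor respects. First, the paper fixes a single point $a\in\mathbb H$ and counts distinct complex values $j_\Gamma(g\gamma a)$ as roots of the one-variable polynomial $P(j_\Gamma(a),Y)$, whereas you work with the functional identity and count roots of $P(j_\Gamma(t),Y)$ in an algebraic closure of $\m C(j_\Gamma(t))$; these are equivalent viewpoints. Second, the paper phrases the conclusion in terms of the double coset $\Gamma g\Gamma$ containing infinitely many left $\Gamma$-cosets when $g\notin\text{Comm}(\Gamma)$; this tacitly uses that for a Fuchsian group of the first kind (finite covolume) the two indices $[\Gamma:\Gamma\cap g^{-1}\Gamma g]$ and $[\Gamma:\Gamma\cap g\Gamma g^{-1}]$ are simultaneously finite or infinite. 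Your route is slightly more self-contained: you bound one index directly from $P$, then obtain the other by the substitution $s=g_1 t$ (equivalently, by running the same argument for $g_1^{-1}$), so no appeal to covolume is needed. The step you flag as delicate---passing from $j_\Gamma(g_1\gamma_1 t)\equiv j_\Gamma(g_1\gamma_2 t)$ to an identity in $PSL_2(\m R)$---is indeed routine: away from the discrete set of elliptic points the element $\delta(t)$ is uniquely determined, and local constancy on a connected open set plus the rigidity of M\"obius maps gives $g_1\gamma_2=\delta g_1\gamma_1$.
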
 
 \begin{proof}  Let $g \notin \text{Comm} (\Gamma).$ For a contradiction, assume first that $P$ is an algebraic relation over $\m C$ holding between $j_\Gamma(t) $ and $j_{\Gamma } (gt)$. 
Then for all $a \in \m H$, we have that $P(j_\Gamma (a), j_ \Gamma (ga)) = 0.$ For $\gamma \in \Gamma$, consider the point $b_ \gamma = \gamma \cdot a$. Letting $a= b_ \gamma$, we have that $P(j_\Gamma (b_\gamma), j_ \Gamma (gb_\gamma )) = 0.$ 
 
 But, since $j_\Gamma (b_\gamma) = j_\Gamma ( a)$, we have that $P(j_\Gamma (a), j_ \Gamma (gb_\gamma )) = 0.$ Now, by the $\Gamma $-invariance of $j_\Gamma$, we have that for any $\gamma_1 \in \Gamma$,  $P(j_\Gamma (\gamma_1 a), j_ \Gamma (\gamma_1 g \gamma a )) =0$. But $j_\Gamma (\gamma_1 a ) = j_\Gamma (a)$, we have that 
 $$
 P(j_\Gamma (a) , j_\Gamma (\gamma_1 g \gamma a)) =0
 $$ f
 or all $ \gamma_1, \gamma \in \Gamma.$ However, $j_\Gamma$ is precisely $\Gamma$-invariant, and for $g \notin \text{Comm}(\Gamma)$, there are infinitely many left coset representatives of $\Gamma$ among the double coset $\Gamma g \Gamma$. Then there are infinitely many distinct points $y$ for which $P(j_\Gamma (a ) , y) =0$ holds, contradicting the fact that $P=0$ gives an algebraic relation. 
  
 \end{proof} 
 
 \begin{lem}\cite[Section 7.2]{Shim} \label{isalg} For $g \in {\rm Comm}( \Gamma ) ,$ $j_\Gamma(t) $ is algebraically dependent with $j_{\Gamma } (gt)$ over $\m C$.  
 \end{lem}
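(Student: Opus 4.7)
The plan is to exploit the defining property of the commensurator, namely that $\Gamma \cap g^{-1}\Gamma g$ has finite index in $\Gamma$, in order to exhibit both $j_\Gamma(t)$ and $j_\Gamma(gt)$ as elements of a common finite algebraic extension of $\m C(j_\Gamma(t))$.

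First I would set $\Gamma' = \Gamma \cap g^{-1}\Gamma g$. Since $g \in \mathrm{Comm}(\Gamma)$, this group has finite index in $\Gamma$, and $g\Gamma' g^{-1} = g\Gamma g^{-1} \cap \Gamma$ also has finite index in $\Gamma$. I would then verify that $j_\Gamma(gt)$, viewed as a meromorphic function on $\mathbb{H}$ (meromorphic at the cusps of $\Gamma'$), is invariant under $\Gamma'$: for any $\gamma' \in \Gamma'$, by construction $g\gamma' g^{-1} \in \Gamma$, hence
\[
j_\Gamma(g \cdot \gamma' t) = j_\Gamma\bigl( (g\gamma' g^{-1}) \cdot gt \bigr) = j_\Gamma(gt).
\]
Clearly $j_\Gamma(t)$ is also $\Gamma'$-invariant since $\Gamma' \subset \Gamma$.

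Next I would invoke the standard theory of automorphic functions for the finite-index subgroup $\Gamma'$: the field $\mathcal{A}_0(\Gamma')$ of $\Gamma'$-automorphic functions is the function field $\m C(X(\Gamma'))$ of the compact Riemann surface $X(\Gamma') = \Gamma' \setminus \mathbb{H}_{\Gamma'}$, and the natural covering $X(\Gamma') \to X(\Gamma)$ has finite degree $[\Gamma : \Gamma']$. Consequently the induced extension of function fields
\[
\m C(j_\Gamma(t)) = \m C(X(\Gamma)) \hookrightarrow \m C(X(\Gamma')) = \mathcal{A}_0(\Gamma')
\]
is a finite algebraic extension. Since both $j_\Gamma(t)$ and $j_\Gamma(gt)$ lie in $\mathcal{A}_0(\Gamma')$, the element $j_\Gamma(gt)$ is algebraic over $\m C(j_\Gamma(t))$, which is exactly the desired conclusion.

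The only subtle point I expect is verifying that $j_\Gamma(gt)$ has the appropriate meromorphic behavior at the cusps of $\Gamma'$, so that it genuinely represents an element of $\m C(X(\Gamma'))$ and not just a $\Gamma'$-invariant meromorphic function on $\mathbb{H}$. This however follows from the fact that $g \in \mathrm{PSL}_2(\mathbb{R})$ maps the cusps of $\Gamma'$ bijectively to the cusps of $g\Gamma' g^{-1} \subset \Gamma$, together with the meromorphy of $j_\Gamma$ at the cusps of $\Gamma$; once this is checked the argument is complete and, as a bonus, yields an effective bound on the degree of the minimal polynomial of $j_\Gamma(gt)$ over $\m C(j_\Gamma(t))$ in terms of $[\Gamma : \Gamma']$, which is exactly the $\Gamma$-special polynomial $\Psi_{\tilde g}$ discussed earlier.
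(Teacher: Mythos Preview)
Your argument is correct and is precisely the standard proof from Shimura's book that the paper cites in lieu of giving its own proof. The paper itself provides no argument beyond the reference \cite[Section 7.2]{Shim}, and what you have written is exactly the construction carried out there: pass to the finite-index subgroup $\Gamma' = \Gamma \cap g^{-1}\Gamma g$, observe that both $j_\Gamma(t)$ and $j_\Gamma(gt)$ are $\Gamma'$-automorphic, and conclude from the finiteness of $[\mathcal{A}_0(\Gamma') : \m C(j_\Gamma)]$.
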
 
 
\begin{defn} By Lemma \ref{isalg}, when $g \in \text{Comm}( \Gamma ) ,$ there is an irreducible polynomial $\Psi_{\tilde{g}}(x,y) \in \m C[x,y]$ such that $\Psi_{\tilde{g}}(j_\Gamma(t) , j_\Gamma(gt) )=0.$ We call $\Psi_{\tilde{g}}$ a \emph{$\Gamma$-special polynomial}, and the zero set of such a polynomial a \emph{$\Gamma$-special curve}. 
\end{defn} 

{
Now from Theorems \ref{Geotrivial} and \ref{2relations} and Lemmas \ref{nonalg} and \ref{isalg}, one gets the weak form of the Ax-Lindemann-Weierstrass Theorems \ref{ALW} and \ref{ALW bis}.

\begin{Thm}\label{ALW1}
Let $K$ be a differential extension of $(\CC(t), \frac{\partial}{\partial t})$  and $j_{\Gamma}(g_{1}t),...,j_{\Gamma}(g_{n}t)$ be distinct solutions of the Schwarzian equation $(\star)$ that are not algebraic over $K$ nor pairwise related by $\Gamma$-special polynomials. Then the $3n$ functions 
\[j_{\Gamma}(g_{1}t),j'_{\Gamma}(g_{1}t),j''_{\Gamma}(g_{1}t),\ldots,j_{\Gamma}(g_{n}t),j'_{\Gamma}(g_{n}t),j''_{\Gamma}(g_{n}t)\] 
are algebraically independent over $K$. 
\end{Thm}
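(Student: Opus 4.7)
The plan is to proceed by induction on $n$, leveraging both strong minimality and geometric triviality of the Schwarzian equation $(\star)$ established in Theorem \ref{Geotrivial}. For the base case $n = 1$: Remark \ref{SMspecial} gives $\td_K K\gen{j_\Gamma(g_1 t)} \in \{0, 3\}$, and the hypothesis that $j_\Gamma(g_1 t)$ is not algebraic over $K$ forces the degree to be $3$, so the three functions $j_\Gamma(g_1 t), j'_\Gamma(g_1 t), j''_\Gamma(g_1 t)$ are algebraically independent over $K$.

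For the inductive step, set $K_n = K\gen{j_\Gamma(g_1 t), \ldots, j_\Gamma(g_n t)}$, so that $\td_K K_n = 3n$ by applying the induction hypothesis to the first $n$ solutions (the hypotheses of the theorem are inherited by any subcollection). It suffices to show that $j_\Gamma(g_{n+1} t)$ is not algebraic over $K_n$, for then strong minimality forces $\td_{K_n} K_n\gen{j_\Gamma(g_{n+1} t)} = 3$ and the result follows. Suppose for contradiction that $j_\Gamma(g_{n+1} t)$ is algebraic over $K_n$. Then the full collection $j_\Gamma(g_1 t), \ldots, j_\Gamma(g_{n+1} t)$ together with all their derivatives is algebraically dependent over $K$. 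Geometric triviality then forces some pair to be algebraically dependent over $K$ with their derivatives, and this pair must involve $j_\Gamma(g_{n+1} t)$ together with some $j_\Gamma(g_i t)$ for $i \leq n$, since the first $n$ are independent.

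Now apply Theorem \ref{2relations} to this pair. Since neither element is algebraic over $K$ by hypothesis, the theorem yields a nonzero polynomial $P \in \CC[y_1, y_2]$ such that $P(j_\Gamma(g_i t), j_\Gamma(g_{n+1} t)) = 0$. Substituting $t \mapsto g_i^{-1} t$ — legitimate because the remark following Lemma \ref{fct1} guarantees that $j_\Gamma(g_i^{-1} t)$ and $j_\Gamma(g_{n+1} g_i^{-1} t)$ are well-defined solutions — transforms this into $P(j_\Gamma(t), j_\Gamma(g_{n+1} g_i^{-1} t)) = 0$, exhibiting algebraic dependence of $j_\Gamma(t)$ and $j_\Gamma(g_{n+1} g_i^{-1} t)$ over $\CC$. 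The contrapositive of Lemma \ref{nonalg} then forces $g_{n+1} g_i^{-1} \in \text{Comm}(\Gamma)$, so $P$ is (up to the change of variables) a $\Gamma$-special polynomial relating $j_\Gamma(g_i t)$ and $j_\Gamma(g_{n+1} t)$, contradicting the hypothesis that the given solutions are not pairwise related by $\Gamma$-special polynomials.

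The main technical point to address is the ``no new constants'' hypothesis of Theorem \ref{2relations}, which is not explicitly included in the statement here. The standard workaround is to enlarge $K$ at the outset so that its constant field is exactly $\CC$ (replacing $K$ by its compositum with $\CC$ inside the ambient universe), and then to invoke Fact \ref{nonewparam} to ensure that no algebraic dependencies are lost in this reduction: any relation over a constant-extended field descends to one definable from the original parameters. Once this bookkeeping is taken care of, the chain of implications — strong minimality produces the $\{0,3\}$ dichotomy, geometric triviality reduces to a pairwise relation, Theorem \ref{2relations} extracts a polynomial over $\CC$, and Lemma \ref{nonalg} identifies it as $\Gamma$-special — runs cleanly and the induction closes.
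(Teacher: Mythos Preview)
Your proof is correct and follows essentially the same approach as the paper's: both use strong minimality to get the $\{0,3\}$ dichotomy, geometric triviality to reduce to a pairwise dependence, Theorem \ref{2relations} to extract a polynomial relation over $\CC$, and Lemma \ref{nonalg} to identify it as $\Gamma$-special. The only cosmetic difference is that you frame it as an explicit induction on $n$ while the paper argues by direct contradiction (picking out one coordinate that becomes algebraic over the field generated by the others); your handling of the ``no new constants'' hypothesis is slightly over-engineered, since in the paper's ambient differentially closed field $\mc U$ the constant field is exactly $\CC$, so any $K \supseteq \CC(t)$ inside $\mc U$ automatically satisfies it.
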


\begin{proof}

For contradiction, assume that the $3n$ functions 
\[j_{\Gamma}(g_{1}t),j'_{\Gamma}(g_{1}t),j''_{\Gamma}(g_{1}t),\ldots,j_{\Gamma}(g_{n}t),j'_{\Gamma}(g_{n}t),j''_{\Gamma}(g_{n}t)\] 
are algebraically dependent over $K$. Define the field $\tilde K$ as 
\begin{eqnarray}\notag \tilde K&=&K\left(j_{\Gamma}(g_{2}t),j'_{\Gamma}(g_{2}t),j''_{\Gamma}(g_{2}t)\ldots,j_{\Gamma}(g_{n}t),j'_{\Gamma}(g_{n}t),j''_{\Gamma}(g_{n}t)\right)\\\notag
&=& K\gen{j_{\Gamma}(g_{2}t),\ldots,j_{\Gamma}(g_{n}t)}.
\end{eqnarray}
By strong minimality of equation (\ref{stareqn}), it must be that $j_{\Gamma}(g_1t)\in \tilde K^{alg}$ and by geometric triviality of (\ref{stareqn}), we have that
\[j_{\Gamma}(g_1t)\in K\gen{j_{\Gamma}(g_{i}t)}^{alg}\]
for some $i=2,\ldots,n$. Using Theorem \ref{2relations} we get that 
\[j_{\Gamma}(g_1t)\in{\mathbb C}(j_{\Gamma}(g_{i}t))^{alg}\]
and so
\[j_{\Gamma}(t)\in{\mathbb C}(j_{\Gamma}(g_{i}g^{-1}_{1}t))^{alg}\]
Now using  Lemma \ref{nonalg}, it must be the case that $g=g_{i}g^{-1}_{1}\in Comm(\Gamma)$. So for the $\Gamma$-special polynomial $\Psi_{\tilde{g}}$, we get
\[\Psi_{\tilde{g}}(j_{\Gamma}(t),j_{\Gamma}(g_{i}g^{-1}_{1}t))=0\]
and hence
\[\Psi_{\tilde{g}}(j_{\Gamma}(g_{1}t),j_{\Gamma}(g_{i}t))=0.\]
This contradicts our assumption that $j_{\Gamma}(g_{1}t)$ and $j_{\Gamma}(g_{i}t)$ are not related by any $\Gamma$-special polynomials.
\end{proof}

}

\section{Orthogonality and the Ax-Lindemann-Weierstrass theorem} \label{OrthALW}

In the previous sections, we have understood the structure of the solution set of 
$$S_{\frac{d}{dt}}(y) +(y')^2\cdot R_{j_{\Gamma}}(y) =0.$$ Define 
\begin{eqnarray} \label{chieqn} \chi _{\Gamma, \frac{d}{dt}} (y) = S_{\frac{d}{dt}}(y) +(y')^2\cdot R_{j_{\Gamma}}(y). 
\end{eqnarray}
In this section, we consider equations of the form $\chi _{\Gamma, \frac{d}{dt}}  (y) = a$ for $a$ an element in some differential field extension of $\m Q$, and produce a similar analysis. 

\subsection{Strong minimality and algebraic relations on other fibers} 
First, we prove the solution set of the equation $\chi _{\Gamma, \frac{d}{dt}}  (y) = a$ is strongly minimal and characterize the algebraic relations between solutions. Essentially, the analysis from \cite[Section 5.1]{FreSca} adapts to this case, but for the sake of completeness, we will provide a brief explanation here.

Let $a \in K$ be an element in some differential field extension of $\m Q$. By Seidenberg's embedding theorem, we can, without loss of generality, assume $a=a(t)$ is given by a meromorphic function over some domain $U$, and the derivation is given by $\frac{d}{dt}$. After sufficently shrinking the domain, there is some meromorphic function $\tilde a (t)$ satisfying $S_{\frac{d}{dt}}(\tilde a) = a$ such that $$\chi _{\Gamma, \frac{d}{dt}}  (j_{\Gamma}(\tilde a(t)) ) = a(t).$$ 

The following Lemma follows by the Schwarzian chain rule and is nearly identical to \cite[Lemma 5.1]{FreSca}:

\begin{lem} \label{varchangelem} 
Let $K$ be a differentially closed $\frac{d}{dt}$-field containing $a$. There exists $\partial \in K\frac{d}{dt}$ such that $\chi _{\Gamma, \partial}  (y) =0$.

\end{lem}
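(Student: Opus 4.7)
The plan is to produce the desired $\partial$ explicitly in the form $\partial = f \cdot \frac{d}{dt}$ for a suitable nonzero $f \in K$, chosen so that the equation $\chi_{\Gamma,\tfrac{d}{dt}}(y) = a$ becomes $\chi_{\Gamma,\partial}(y) = 0$ after rescaling the derivation. The main ingredient is the transformation rule for the Schwarzian under a $K$-linear rescaling of the derivation.

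First I would compute how $\chi_{\Gamma}$ behaves under such a rescaling. Writing $' = \frac{d}{dt}$ and using $\partial g = f g'$, $\partial^2 g = f f' g' + f^2 g''$, a direct calculation from the definition of the Schwarzian yields
\begin{equation*}
S_{\partial}(y) \;=\; f^{2}\,S_{\tfrac{d}{dt}}(y) \;+\; f f'' - \tfrac{1}{2}(f')^{2},
\end{equation*}
which is just the Schwarzian chain rule phrased in derivation language (viewing $\partial$ as $\frac{d}{d\tau}$ with $\frac{d\tau}{dt} = 1/f$). Combining with $(\partial y)^{2} = f^{2}(y')^{2}$ and the definition of $\chi_{\Gamma,\partial}$ from \eqref{chieqn} gives the uniform identity
\begin{equation*}
\chi_{\Gamma,\partial}(y) \;=\; f^{2}\,\chi_{\Gamma,\tfrac{d}{dt}}(y) \;+\; \bigl(f f'' - \tfrac{1}{2}(f')^{2}\bigr).
\end{equation*}

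Thus, in order to have $\chi_{\Gamma,\partial}(y) = 0$ whenever $\chi_{\Gamma,\tfrac{d}{dt}}(y) = a$, it suffices to find $f \in K^{\times}$ solving the $y$-free ODE
\begin{equation*}
f f'' - \tfrac{1}{2}(f')^{2} = -a f^{2}.
\end{equation*}
The substitution $f = h^{2}$ \emph{linearises} this equation: one finds $f f'' - \tfrac{1}{2}(f')^{2} = 2 h^{3} h''$ and $- a f^{2} = - a h^{4}$, so the condition collapses to the linear second-order equation
\begin{equation*}
h'' = -\tfrac{a}{2}\, h.
\end{equation*}
Since $K$ is a differentially closed field of characteristic zero containing $a$, such an equation admits a nonzero solution $h \in K$. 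Setting $f := h^{2}$ and $\partial := f \cdot \frac{d}{dt} \in K\tfrac{d}{dt}$, the identity above reads $\chi_{\Gamma,\partial}(y) = f^{2}\bigl(\chi_{\Gamma,\tfrac{d}{dt}}(y) - a\bigr)$, so the two fibers agree as claimed.

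The computation itself is essentially routine, and closely parallels \cite[Lemma 5.1]{FreSca}; the only step requiring genuine insight is the substitution $f = h^{2}$ reducing the nonlinear equation for $f$ to a linear one for $h$, which is exactly what lets us remain inside the differentially closed field $K$ without introducing any further algebraic or transcendental extensions.
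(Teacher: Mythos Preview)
Your proof is correct. The paper's proof is essentially the same idea, phrased through the Schwarzian rather than the linear equation: it solves the third-order equation $S_{\frac{d}{dt}}(\tilde a)=a$ for some $\tilde a\in K$ (via the differential Nullstellensatz, after checking consistency analytically) and takes $\partial=\tfrac{1}{\tilde a'}\tfrac{d}{dt}$. Your route linearises first: you solve $h''=-\tfrac{a}{2}h$ and set $\partial=h^{2}\tfrac{d}{dt}$. These are the same derivation up to a constant, since for two independent solutions $h_1,h_2$ of $h''=-\tfrac{a}{2}h$ one has $S(h_1/h_2)=a$ and $(h_1/h_2)'=W/h_2^{2}$ with $W$ constant; thus $\tfrac{1}{\tilde a'}$ is a constant multiple of $h^{2}$. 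Your version is a bit more direct, appealing only to the existence of a nonzero solution of a linear ODE in a differentially closed field, whereas the paper invokes Seidenberg and Cauchy to justify solvability of the nonlinear Schwarzian equation before passing to $K$.
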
 
\begin{proof}
The equation $S_{\frac{d}{dt}}(\tilde a) = a$, with unknown $\tilde a$, can be considered as a differential equation over $\m C\langle a \rangle$. By Seidenberg's theorem this field can be assumed to be a field of meromorphic functions on some domain $U\subset \m C$ and by the usual Cauchy theorem, one can build a solution, holomorphic on some domain $U' \subset U$.

By the differential Nullstellensatz there exists $\tilde a \in K $ a solution of $S_{\frac{d}{dt}}(\tilde a) = a$. Then $\partial = \frac{1}{{\tilde a}'}\frac{d}{dt}$.
\end{proof}

\begin{thm} \label{Strminfiber} The sets defined by $\chi _{\Gamma, \frac{d}{dt}} (y) = a$ are strongly minimal and geometrically trivial. If $a_1, \ldots , a_n$ satisfy $\chi _{\Gamma, \frac{d}{dt}} (a_i) = a$ and are dependent, then there exist $i,j \leq n$ and a $\Gamma$-special polynomial $P$ such that $P(a_i, a_j) = 0$.
\end{thm}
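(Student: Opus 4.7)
The approach is to reduce the analysis of the fiber $\chi_{\Gamma,\frac{d}{dt}}(y) = a$ to the zero-fiber case already handled in Theorem \ref{Geotrivial}, via the Schwarzian chain rule. By Lemma \ref{varchangelem}, after embedding into meromorphic functions, there exists $\tilde a$ with $S_{\frac{d}{dt}}(\tilde a) = a$ and the rescaled derivation $\partial = \frac{1}{\tilde a'}\frac{d}{dt}$ which satisfies $\chi_{\Gamma,\partial}(y) = 0$ precisely when $\chi_{\Gamma,\frac{d}{dt}}(y) = a$. The structural observation driving everything is that, in the meromorphic realization, any solution $y$ of $\chi_{\Gamma,\frac{d}{dt}}(y) = a$ has the form $y = j_{\Gamma}(g\tilde a(t))$ for some $g \in GL_2(\m C)$ (with the usual image restriction), by the same chain-rule argument that proves Lemma \ref{fct1}.

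For strong minimality, I would take a solution $y$ with $y \notin K^{alg}$ and pass to the enlarged differential field $(K\gen{\tilde a}, \partial)$. Since $R_{j_\Gamma}$ is independent of the derivation, Condition \ref{Ric} for $\Gamma$ still holds, so Theorem \ref{juststrmin} applies to the equation $\chi_{\Gamma,\partial}(y) = 0$ over $K\gen{\tilde a}$, giving $\td_{K\gen{\tilde a}} K\gen{\tilde a}\gen{y} \in \{0,3\}$. Because $\chi_{\Gamma,\frac{d}{dt}}(y) = a$ is already a third-order equation over $K$, the field $K\gen{y}$ has transcendence degree at most $3$ over $K$, and adjoining $\tilde a$ does not produce new algebraic relations among $y, y', y''$; hence $\td_K K\gen{y} \in \{0,3\}$, which yields strong minimality.

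For geometric triviality together with the characterization of algebraic dependencies, suppose $a_1,\dots,a_n$ satisfy $\chi_{\Gamma,\frac{d}{dt}}(a_i) = a$ and are algebraically dependent (together with their derivatives) over some extension $F \supseteq K$. Writing each $a_i = j_{\Gamma}(g_i\tilde a)$ and passing to the $\partial$-derivation, the $a_i$ become distinct solutions of the zero-fiber equation $\chi_{\Gamma,\partial}(w) = 0$ over $F\gen{\tilde a}$. Applying Theorem \ref{ALW1} in that setting, the dependency forces some pair $a_i, a_j$ to satisfy a $\Gamma$-special polynomial relation, i.e.\ $\Psi_{\tilde g}(a_i, a_j) = 0$ for $g = g_i g_j^{-1} \in \text{Comm}(\Gamma)$; as a byproduct, no three $a_i$'s can be simultaneously dependent in a ``genuinely ternary'' way, which is geometric triviality.

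The step requiring the most care will be the transfer of transcendence degrees between the two derivations. One must check that the fields $K\gen{y}$ (computed with $\frac{d}{dt}$) and $K\gen{\tilde a}\gen{y}$ (computed with $\partial$) agree on algebraic relations among $y,y',y''$ modulo the adjunction of $\tilde a$. This holds because $\tilde a'$ is a unit in $K\gen{\tilde a}$, so the two derivations generate the same ascending chain of field extensions once $\tilde a$ is adjoined, and the two Schwarzian equations have the same order. As the paper indicates, this is a routine adaptation of the analysis in \cite[Section 5.1]{FreSca}, and the substantive content has already been packaged into Theorems \ref{juststrmin}, \ref{Geotrivial}, \ref{2relations} and \ref{ALW1}.
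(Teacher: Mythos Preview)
Your overall strategy---change the derivation via Lemma~\ref{varchangelem} so that the fiber $\chi_{\Gamma,\frac{d}{dt}}(y)=a$ becomes the zero fiber $\chi_{\Gamma,\partial}(y)=0$, then invoke Theorem~\ref{juststrmin} and Theorem~\ref{ALW1}---is exactly the paper's approach. The difference is in how the transfer is carried out, and your version leaves a real gap.

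The paper does not work over an arbitrary $K$ and then adjoin $\tilde a$. Instead it tests strong minimality over a \emph{differentially closed} $K$; Lemma~\ref{varchangelem} is stated precisely so that $\tilde a$ already lies in $K$. Then $\partial=\frac{1}{\tilde a'}\frac{d}{dt}$ is a derivation on $K$ itself, the two solution sets $\chi_{\Gamma,\frac{d}{dt}}(y)=a$ and $\chi_{\Gamma,\partial}(y)=0$ are literally equal as subsets of the universe, and $\frac{d}{dt}$-constructible subsets are $\partial$-constructible (the derivations differ by a unit of $K$). Strong minimality of the zero fiber then transfers for free, with no transcendence-degree bookkeeping at all.

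By contrast, your argument needs the claim that ``adjoining $\tilde a$ does not produce new algebraic relations among $y,y',y''$,'' and the justification you offer (that the two derivations generate the same chain of extensions once $\tilde a$ is adjoined) only shows $K\langle\tilde a\rangle\langle y\rangle_\partial = K\langle\tilde a\rangle\langle y\rangle_{d/dt}$ as fields. It does not rule out the scenario $\td_K K\langle y\rangle\in\{1,2\}$ while $\td_{K\langle\tilde a\rangle} K\langle\tilde a,y\rangle=0$: transcendence degree can drop when the base field is enlarged, and nothing you have written prevents $y$ from becoming algebraic over $K\langle\tilde a\rangle$. The fix is simply to take $K$ differentially closed from the start, as the paper does; then $K\langle\tilde a\rangle=K$ and the issue evaporates. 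Your treatment of the second sentence of the theorem (reduce to the zero fiber and apply Theorem~\ref{ALW1}) matches the paper.
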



The proof of Theorem \ref{Strminfiber} is quite similar to that of \cite{FreSca} Proposition 5.2, but we include it here for completeness. 
\begin{proof} We first explain why $\chi _{\Gamma, \frac{d}{dt}} (y) = a$ is strongly minimal; it suffices to show that over some differentially closed field which contains the coefficients of the equation, that every differentially constructible set is finite or cofinite. Using properties of differentially closed field, one can find in $K$ $\tilde a$ as above.

By Lemma \ref{varchangelem}, $K$ is a $\partial$-differential field and the sets $\chi _{\Gamma, \frac{d}{dt}} (y) = a$ and $\chi _{\Gamma, \partial}  (y) =0$ coincide. Now strong minimality follows by Theorem \ref{juststrmin} and the fact that $\frac{d}{dt}$-differentially constructible sets are $\partial$-differentially constructible (over $K$). 

Algebraic dependencies among elements of the set $\chi _{\Gamma, \frac{d}{dt}} (y) = a$ give algebraic dependencies among elements of the set $\chi _{\Gamma, \partial}  (y) =0$, and thus by Theorem \ref{ALW1} must be given by $\Gamma$-special polynomials. 
\end{proof} 

The final piece of our analysis of the fibers of $\chi$ shows that there are no algebraic relations between different fibers.

\begin{thm} \label{fiberfullstatement} For $a \neq b$, the strongly minimal sets defined by $\chi _{\Gamma, \frac{d}{dt}} (y) = a$ and by $\chi _{\Gamma, \frac{d}{dt}} (y) = b$ are orthogonal.
\end{thm}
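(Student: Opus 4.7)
The plan is to argue by contradiction: assume the strongly minimal sets $X_a := \{y : \chi_{\Gamma, \frac{d}{dt}}(y) = a\}$ and $X_b := \{y : \chi_{\Gamma, \frac{d}{dt}}(y) = b\}$ are nonorthogonal, and deduce $a = b$. Since both sets are geometrically trivial by Theorem \ref{Strminfiber}, Fact \ref{nonewparam} promotes the hypothesised nonorthogonality to \emph{weak} nonorthogonality over the parameter field $K = \mathbb{C}\langle t, a, b\rangle$. I would extract generic $j_1 \in X_a$ and $j_2 \in X_b$ that are interalgebraic over $K$.

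I next pass to the meromorphic picture via Seidenberg's theorem, and apply Lemma \ref{varchangelem} separately for $a$ and $b$ to write $j_1 = j_\Gamma \circ u$ and $j_2 = j_\Gamma \circ v$ for non-constant meromorphic maps $u, v : U \to \mathbb{H}$ with $S_{\frac{d}{dt}}(u) = a$ and $S_{\frac{d}{dt}}(v) = b$. The interalgebraicity provides a nonzero $P \in K[y_1, y_2]$ with $P(j_\Gamma(u), j_\Gamma(v)) = 0$.

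The core step is to show that $P$ may be chosen with coefficients in $\mathbb{C}$, and that this in turn forces $v = \gamma u$ for some $\gamma \in \text{Comm}(\Gamma)$. For the descent, I would adapt the Lie-algebra/ideal-stability argument of Theorem \ref{2relations}: each solution $j_i$ carries an infinitesimal $\mathfrak{psl}_2(\mathbb{C})$-action obtained, as in Theorem \ref{juststrmin}, by precomposition in its Schwarzian-primitive variable (namely $\tilde{a}$, respectively $\tilde{b}$, produced by Lemma \ref{varchangelem}). Verticalizing each action against the common derivation $\partial_t$ yields two commuting $K$-linear copies of $\mathfrak{psl}_2(\mathbb{C})$ on the localization $L/I$, where $I$ is the prime differential ideal of relations between $j_1$ and $j_2$. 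The stabilizer and adjoint-matrix analysis of Theorem \ref{2relations} then transports verbatim and forces $I$ to be generated by a polynomial $P \in \mathbb{C}[y_1, y_2]$. With such a $\mathbb{C}$-polynomial relation in hand, the $\Gamma$-invariance argument of Lemmas \ref{nonalg} and \ref{isalg} shows that $\{P = 0\}$ is a $\text{Comm}(\Gamma)$-special curve, and holomorphic connectedness of the graph of $(u, v)$ in $\mathbb{H}\times\mathbb{H}$ singles out one irreducible component, so $v = \gamma u$ for a fixed $\gamma \in \text{Comm}(\Gamma)$.

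Finally, the Möbius invariance of the Schwarzian derivative gives
\[
b = S_{\frac{d}{dt}}(v) = S_{\frac{d}{dt}}(\gamma u) = S_{\frac{d}{dt}}(u) = a,
\]
contradicting $a \neq b$. The main obstacle I anticipate is verifying that the Lie-algebra argument of Theorem \ref{2relations} genuinely transports to the mixed-fiber setting: the two $\mathfrak{psl}_2(\mathbb{C})$-actions live naturally in the $\partial_a$- respectively $\partial_b$-pictures, and one must check that verticalizing relative to the common $\partial_t$ preserves commutativity with the combined derivation $D^{(2)}$ and the proper non-triviality of the stabilizer of $I$ in each copy. Modulo that verification, the remainder of the argument follows the template of Theorems \ref{2relations} and \ref{ALW1} closely.
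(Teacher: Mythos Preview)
Your plan diverges substantially from the paper's proof, and the obstacle you flag is genuine and, as far as I can see, not fixable within the framework you propose.

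The paper does \emph{not} attempt to transport the Lie-algebra argument of Theorem~\ref{2relations} to the mixed-fiber setting. Instead it proceeds in two steps: first, it reduces to the case $a=0$ by precomposing with a local inverse $\tilde a^{-1}$ (so that one solution becomes $j_\Gamma(t)$ itself and the other becomes $j_\Gamma(g\tilde b(\tilde a^{-1}(t)))$); then it runs a purely complex-analytic argument. One picks a small disc $\mathbb D_2$ mapped to $\mathbb H$ by a M\"obius transformation $\gamma$, so that $j_\Gamma(\gamma t)$ is defined on $\mathbb D_2$, and examines the \emph{maximal} domain $U$ on which $j_\Gamma(g\tilde b(t))$ is algebraic over $\mathscr M(U)$. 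Since interalgebraicity with $j_\Gamma(\gamma t)$ forces $U=\mathbb D_2$ (because $j_\Gamma$ has no algebraic extension past $\partial\mathbb H$), and since the same non-extendability forces $g\tilde b(\mathbb D_2)=\mathbb H$, one concludes that $\tilde b$ is a biholomorphism between discs. Schwarz's lemma then makes $\tilde b$ a homography, so $b=S_{d/dt}(\tilde b)=0$.

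The difficulty with your route is structural. In Theorem~\ref{2relations} the single $\mathfrak{psl}_2$ action (precomposition in $t$) simultaneously preserves \emph{both} equations, and the crucial step $X_1=X_2$ of Lemma~\ref{lem6} is obtained precisely from the descent of $I$ to $\mathbb C$---which in that setting is furnished by Fact~\ref{nonewparam} because both fibers are defined over $\mathbb C$. In your situation the fibers are defined over $\mathbb C\langle a\rangle$ and $\mathbb C\langle b\rangle$, so Fact~\ref{nonewparam} only gives descent to $\mathbb C\langle a,b\rangle$; this is exactly what you are trying to improve, so invoking Lemma~\ref{lem6} would be circular. Your proposed fix---two separate $\mathfrak{psl}_2$ actions, one via $\tilde a$ and one via $\tilde b$---does give, after verticalization, two bases of $\mathrm{Der}_K(F)$ related by some constant $A=\mathrm{Ad}(g)$. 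But you then need to compare the \emph{un}-verticalized kernels (which are what contain $y_1$, respectively $y_2$), and these differ by the corrections $D_1=\frac{1}{\tilde a'}D$ versus $D_2=\frac{1}{\tilde b'}D$ together with the distinct ``time'' variables $\tilde a$, $\tilde b$. There is no reason the two $3$-planes $\mathrm{span}\{X_1,H_1,Y_1\}$ and $\mathrm{span}\{X_2,H_2,Y_2\}$ inside $\mathrm{Der}_{\mathbb C}(F)$ coincide unless $\tilde b$ is already a M\"obius function of $\tilde a$---which is the conclusion, not a hypothesis. So the ``stabilizer and adjoint-matrix analysis'' does not close.

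A smaller point: even granting a relation $P\in\mathbb C[y_1,y_2]$ with $P(j_\Gamma(u),j_\Gamma(v))=0$, Lemmas~\ref{nonalg}--\ref{isalg} as stated compare $j_\Gamma(t)$ with $j_\Gamma(gt)$ for $g\in GL_2(\mathbb C)$; they do not by themselves tell you that an arbitrary holomorphic pair $(u,v)$ with $P(j_\Gamma(u),j_\Gamma(v))=0$ satisfies $v=\gamma u$. You would still need an argument identifying the irreducible component of $\{P=0\}$ as $\Gamma$-special, and then lifting through $j_\Gamma\times j_\Gamma$. This is doable, but it is additional work beyond what those lemmas supply.
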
 

Theorem \ref{fiberfullstatement} is more general than \cite{FreSca} Theorem 5.4, but the proof there cannot be adapted to the case of non-arithmetic fuchsian groups.

\begin{proof} Throughout, we respectively use $\mathscr M (U)$ and $\mathbb D(p,r)$  for the field of meromorphic functions on a domain $U \subset \mathbb C$, and the open complex disk centered at $p\in \mathbb C$ with radius r.  As both $\chi _{\Gamma, \frac{d}{dt}} ^{-1} (a)$ and $\chi _{\Gamma, \frac{d}{dt}} ^{-1} (b)$ are strongly minimal and geometrically trivial, if $\chi _{\Gamma, \frac{d}{dt}} ^{-1} (a) \not \perp \chi _{\Gamma, \frac{d}{dt}} ^{-1} (b)$, then there is a finite-to-finite correspondence between the sets, defined over $\m Q \langle a , b \rangle.$ Using Seidenberg's embedding theorem, we regard $a,b$ as meromorphic functions on a domain $U \subset \mathbb C$. Recall that $\tilde a$ denotes a meromorphic function such that $S_{\frac{d}{dt}}(\tilde a) = a$. The function $\tilde b$ is defined similarly. 

Using the holomorphic inverse function theorem, we claim that without loss of generality, it is enough to prove the result for the case $a=0$. Indeed, since $j_{\Gamma}(\tilde a(t))$ is interalgebraic with $j_{\Gamma}(g\tilde b(t))$ for some $g\in GL_2(\mathbb C)$, we have that $j_{\Gamma}(t)$ is interalgebraic with $j_{\Gamma}(g\tilde b({\tilde a^{-1}(t}))$ (since $\tilde b$ is defined up to composition with linear fractional transformations, we can assume that there is a common regular point for $\tilde a$ and $\tilde b$ and work locally around this point). Letting $\tilde c= \tilde b\circ\tilde a^{-1}$ and $c=S_{\frac{d}{dt}}(\tilde c)$, we see that $\chi _{\Gamma, \frac{d}{dt}} ^{-1} (0) \not \perp \chi _{\Gamma, \frac{d}{dt}} ^{-1} (c)$ and by geometric triviality this occurs over $\m Q \langle c \rangle$.

So we assume that $a=0$. Let $p$ be a regular point for $\tilde b(t)$ and let $\mathbb D_1 = \mathbb D(p,\epsilon)$ be a disc of regular points of $\tilde b(t)$. Also let $\gamma$ be a linear fractional transformation sending $\mathbb D_2 =\mathbb D(p,\frac12 \epsilon)$ to $\mathbb H$. 

Since $\chi _{\Gamma, \frac{d}{dt}} ^{-1} (0) \not \perp \chi _{\Gamma, \frac{d}{dt}} ^{-1} (b)$, we have that for some $g\in GL_2(\mathbb C)$, 
the solution $j_{\Gamma}(g\tilde b(t))$ is algebraic over 
$ \mathbb Q \langle b , j_{\Gamma}(\gamma t) \rangle \subset \mathscr{M}(\mathbb D_1)(j_{\Gamma}\circ\gamma, j_{\Gamma}'\circ\gamma, j_{\Gamma}''\circ\gamma) \subset \mathscr{M}(\mathbb D_2)$. 
But notice that for any domain $U$ such that $\mathbb D_2 \subseteq U \subseteq \mathbb D_1$, if  $j_{\Gamma}(g\tilde b(t))$ is algebraic over $\mathscr{M}(U)$, then $j_{\Gamma}(\gamma t)$ will also be algebraic over  $\mathscr{M}(U)$. 
This follows from the fact that $\mathscr{M}(\mathbb D_1)\subseteq{}\mathscr{M}(U)$, and $j_{\Gamma}(g\tilde b(t))$ is interalgebraic with $j_{\Gamma}(\gamma t)$ over $\mathbb{Q}\gen{b}\subset\mathscr{M}(\mathbb D_1)$. But $j_{\Gamma}(t)$ cannot be extended algebraically on a neighborhood of $\mathbb H$, hence $U = \mathbb D_2$. 
 
The disc $\mathbb D_2$ is thus the maximal among domains $U$ such that $j_{\Gamma}(g\tilde b(t))$ is algebraic over $\mathscr{M}(U)$. But such a domain satisfies $g\tilde b(\mathbb D_2) = \mathbb H$, that is the image of $\mathbb D_2$ by the regular holomorphic map  $\tilde b$ is the disc $g^{-1}\mathbb H$. A corollary of Schwarz's lemma gives that biholomorphisms from a disc to a disc are restrictions of homographies. 
Hence $\tilde b$ is an homography $h\in PSL_2(\mathbb C)$ and so $b=0$.
\end{proof} 

We can finally turn to the proof of the Ax-Lindemann-Weierstrass Theorem \ref{fullALW}. 

\begin{proof}[Proof of Theorem \ref{fullALW}] Recall that $V \subset \m A^n$ and for each $i=1, \ldots , n$, the variety $V$ is assumed to project dominantly onto $\m A^1$ under projection to the $i^{th}$ coordinate. Thus, the $i^{th}$ coordinate function is nonconstant, and it is possible to equip the field generated by the $i^{th}$ coordinate functions with various differential structures, which will be essential to the technique in our proof.  

\begin{lem} \label{nonzeroder} There is a derivation $\delta$ on $\m C(V)$ such that for each of the coordinate functions $t_i$ for $i=1, \ldots , n$, $\delta (t_i) \neq 0.$ 
\end{lem}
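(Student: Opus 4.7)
The plan is to exploit the fact that the space of $\m C$-derivations of $\m C(V)$ forms a finite-dimensional $\m C(V)$-vector space on which each condition $\delta(t_i)=0$ cuts out a hyperplane, and to produce the desired $\delta$ by avoiding the finite union of these hyperplanes.

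First I would recall that since $\m C(V)/\m C$ is a finitely generated separable extension (we are in characteristic zero) of transcendence degree $d=\dim V$, the space of $\m C$-linear derivations $\operatorname{Der}_{\m C}(\m C(V))$ is a $\m C(V)$-vector space of dimension $d$; moreover, any transcendence basis $x_1,\ldots,x_d$ of $\m C(V)/\m C$ gives a dual basis of partial derivatives $\partial/\partial x_1,\ldots,\partial/\partial x_d$ (uniquely extended from $\m C(x_1,\ldots,x_d)$ to the algebraic extension $\m C(V)$).

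Next, for each $i\in\{1,\ldots,n\}$ I would check that
\[
D_i:=\{\delta\in\operatorname{Der}_{\m C}(\m C(V)):\delta(t_i)=0\}
\]
is a \emph{proper} $\m C(V)$-subspace. This uses the assumption that $t_i$ is nonconstant: since $\m C$ is algebraically closed and $t_i\notin\m C$, the element $t_i$ is transcendental over $\m C$, hence can be extended to a transcendence basis $t_i,s_2,\ldots,s_d$ of $\m C(V)/\m C$. The corresponding partial derivative $\partial/\partial t_i$ then sends $t_i$ to $1$, so it lies outside $D_i$, showing $D_i\subsetneq\operatorname{Der}_{\m C}(\m C(V))$.

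Finally, I would invoke the standard fact that a vector space over an infinite field is never a finite union of proper subspaces. Since $\m C(V)$ contains $\m C$ and is therefore infinite, the union $\bigcup_{i=1}^n D_i$ is a proper subset of $\operatorname{Der}_{\m C}(\m C(V))$, and any $\delta$ outside it satisfies $\delta(t_i)\neq 0$ for every $i$. There is no substantive obstacle here — the argument is pure linear algebra combined with the basic theory of derivations in characteristic zero — so the only thing to take a little care over is packaging the ``partial derivative'' construction to confirm that each $D_i$ is indeed proper.
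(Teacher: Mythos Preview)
Your proof is correct and complete for the lemma as stated, but it takes a genuinely different route from the paper's argument.

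The paper fixes a transcendence basis $z_1,\ldots,z_k$ of $\m C(V)$ over $\m C$, chooses $\m Q$-linearly independent complex numbers $\alpha_1,\ldots,\alpha_k$, and sets $\delta=\sum_i \alpha_i z_i\,\partial/\partial z_i$. The point of the $\m Q$-linear independence is that the field of $\delta$-constants in $\m C(z_1,\ldots,z_k)$ is exactly $\m C$ (since each monomial $z_1^{m_1}\cdots z_k^{m_k}$ is an eigenvector with eigenvalue $\sum m_i\alpha_i$, vanishing only when all $m_i=0$); the constants in the algebraic extension $\m C(V)$ are then algebraic over $\m C$, hence equal to $\m C$. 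From this, $\delta(t_i)\neq 0$ follows immediately because $t_i\notin\m C$.

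Your argument is a clean linear-algebra avoidance: each $D_i=\{\delta:\delta(t_i)=0\}$ is a proper $\m C(V)$-subspace of $\operatorname{Der}_{\m C}(\m C(V))$, and a vector space over an infinite field cannot be covered by finitely many proper subspaces. This is shorter and entirely non-constructive. What the paper's construction buys, beyond the lemma itself, is the stronger conclusion that the constant field of $(\m C(V),\delta)$ is precisely $\m C$; your $\delta$ may well have new constants (e.g.\ on $\m A^2$ the choice $\delta=\partial/\partial t_1+\partial/\partial t_2$ kills $t_1-t_2$). That extra control over the constants is not recorded in the lemma's statement, so your proof meets the stated target, but it is worth noting that the paper's explicit choice gives something strictly stronger.
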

\begin{proof}

Let $z_1, \ldots, z_k$ be a transcendence basis of $\mathbb{C}(V)$ over $\mathbb C$ and $\alpha_1, \ldots , \alpha_k$ be $\mathbb Q$-linearly independent complex numbers. As $\mathbb C (V)$ is an algebraic extension of $\mathbb C (z_1,\ldots, z_k)$ the derivation $ \delta = \sum_i \alpha_i z_i \frac{\partial}{\partial z_i}$ extends a derivation of $\mathbb C(V)$ and the field of constants in $\mathbb C(V)$ is an algebraic extension of the field of constant in $\mathbb C (z_1,\ldots, z_k)$. The latter is $\mathbb C$.
As the projection of $V$ on the $i^{th}$ coordinate is dominant, $\delta (t_i)\not = 0$.
\end{proof}

The transcendence degree over $\m C(V)$ of the $3n$ functions \[j_{\Gamma}(t_1),j'_{\Gamma}(t_1),j''_{\Gamma}(t_1)\ldots,j_{\Gamma}(t_n),j'_{\Gamma}(t_n),j''_{\Gamma}(t_n)\]
is identical to that of the $3n$ functions
\[j_{\Gamma}(t_1),\delta (j_{\Gamma}(t_1)) , \delta ^2 (j_{\Gamma}(t_1)), \ldots, j_{\Gamma}(t_n), \delta (j_{\Gamma}(t_n)), \delta ^2 (j_{\Gamma}(t_n). )\]

Now, for any $t_i$, since $j_\Gamma (t_i)$ is not an algebraic function, it follows by strong minimality that $j_\Gamma(t_i)$ is a generic solution to a $\delta$-differential equation of the form $\chi_{\Gamma, \delta} (y) = a_i$ with $a_i = S_{\delta}(t_i) \in \m C(V)$.\\

If the $3n$ functions are not algebraically independent, then there exist $i,j$ such that the functions 
$$j (t_i ), \delta ( j(t_i)) , \delta^2 ( j(t_i )) , j (t_j ), \delta ( j (t_j)) , \delta^2 ( j(t_j )) $$ 
are algebraically dependent over $K$, the $\delta$-field extension of $\m C(V)$ generated by $j(t_k )$ for those $k$ in some subset of $\{1, \ldots , n \} \setminus \{i,j \}$. Moreover one can choose\footnote{Fix a subset of the coordinates such that there is an algebraic dependence as described above. Then there is some minimal such set. Picking $i,j$ to be any two coordinates of this minimal set, the subset is the collection of coordinates in the remainder of the minimal set.} 
$K$ such that $j(t_i)$ and $j(t_j)$ are not algebraic over $K$.

But then by strong minimality of the equations $\chi_{\Gamma, \delta} (y) = a_i$ and $\chi_{\Gamma, \delta} (y) = a_j$ (Theorem \ref{Strminfiber}), there is a finite-to-finite correspondence between $\chi_{\Gamma, \delta} (y) = a_i$ and $\chi_{\Gamma, \delta} (y) = a_j$ defined over $K$. By Theorem \ref{fiberfullstatement}, it must be that $a_i = a_j$ and $t_i $ and $t_j$ are $\Gamma$-geodesically dependent. A contradiction. 
\end{proof}

\subsection{Orthogonality and commutators}
In this section, we analyze the algebraic relations between solutions of 
\begin{eqnarray} 
\label{firstfiber} S_{\frac{d}{dt}}(y) +(y')^2\cdot R_{j_{\Gamma_1}}(y) =0 \\ 
\label{secondfiber} S_{\frac{d}{dt}}(y) +(y')^2\cdot R_{j_{\Gamma_2}}(y) =0 
\end{eqnarray} 
when $\Gamma_1$ is not necessarily commensurable with $\Gamma_2$. If $\Gamma_1$ is commensurable with $\Gamma_2$, then it is well known that $j_{\Gamma_1}$ is interalgebraic with $j_{\Gamma_2}$ over $\m C$. Moreover this is not the whole story: we say that $\Gamma_1$ is commensurable with $\Gamma_2$ in {\it the wide sense} if $\Gamma_1$ is commensurable to some conjugate of $\Gamma_2$. When such is the case and $\Gamma_1$ is commensurable with $g^{-1}\Gamma_2g$ then again one has that $j_{\Gamma_1}$ is interalgebraic with $j_{\Gamma_2}\circ g$ over $\m C$.

Notice that if $\Gamma_1$ is commensurable with $\Gamma_2$ in the wide sense, then $\text{Comm}(\Gamma_1)$ is conjugate to $\text{Comm}(\Gamma_2)$.  



\begin{thm} \label{GeneralcomplexparametersTHM} Suppose that $\Gamma_1$ is not commensurable with $\Gamma_2$ in the wide sense. Then the sets defined by equations \ref{firstfiber} and \ref{secondfiber} are orthogonal. In particular, for any differential field $K$  \begin{eqnarray*} {\rm tr.deg.} _K K\left( j_{\Gamma_1} (t_1), j'_{\Gamma_1}(t_1), j''_{\Gamma_1} (t_1) ,  j_{\Gamma_2} (t_2), j'_{\Gamma_2}(t_2), j''_{\Gamma_2} (t_2) \right) = \\ {\rm tr.deg.}_K K\left( j_{\Gamma_1} (t_1), j'_{\Gamma_1}(t_1), j''_{\Gamma_1} (t_1) \right)+ {\rm tr.deg.}_K K\left( j_{\Gamma_2} (t_2),  j'_{\Gamma_2}(t_2), j''_{\Gamma_2} (t_2) \right) .
\end{eqnarray*}
\end{thm}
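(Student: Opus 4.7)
The plan is to reduce the statement to a contradiction with the interalgebraicity analysis already developed for $j_\Gamma$, using Fact \ref{nonewparam} (descent of nonorthogonality for geometrically trivial sets) and the $\Gamma_i$-invariance of the uniformizers.

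First, I would suppose for contradiction that the strongly minimal sets defined by (\ref{firstfiber}) and (\ref{secondfiber}) are nonorthogonal. By Theorem \ref{Geotrivial}, each is geometrically trivial, and since both equations have parameters in $\m C$, Fact \ref{nonewparam} yields that they are not weakly orthogonal over $\m C$: there is a finite-to-finite correspondence defined over $\m C$ between their generic solutions. By Lemma \ref{fct1}, such generic solutions have the form $j_{\Gamma_1}(g_1 t)$ and $j_{\Gamma_2}(g_2 t)$ with $g_1, g_2 \in GL_2(\m C)$. After replacing $t$ by $g_1^{-1}t$ and setting $g = g_2 g_1^{-1}$, I would obtain an irreducible polynomial $P \in \m C[x,y]$ such that
\[
P\bigl(j_{\Gamma_1}(t),\, j_{\Gamma_2}(gt)\bigr)=0,
\]
exhibiting $j_{\Gamma_1}(t)$ and $j_{\Gamma_2}(gt)$ as interalgebraic over $\m C$ as meromorphic functions on a suitable open subset of $\mathbb{H}$.

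Next, I would exploit the automorphy of both uniformizers to extract commensurability in wide sense. For any $\gamma_1 \in \Gamma_1$, substituting $\gamma_1 t$ for $t$ and using $j_{\Gamma_1}(\gamma_1 t) = j_{\Gamma_1}(t)$ gives
\[
P\bigl(j_{\Gamma_1}(t),\, j_{\Gamma_2}(g\gamma_1 t)\bigr)=0.
\]
Since $P$ has at most $\deg_y P$ roots in its second variable, the set $\{j_{\Gamma_2}(g\gamma_1 t) : \gamma_1 \in \Gamma_1\}$ is finite, and so the set of cosets $\{\Gamma_2\, g\gamma_1 : \gamma_1\in\Gamma_1\}$ in $\Gamma_2\backslash PSL_2(\m R)$ is finite, equivalently $[\Gamma_1 : \Gamma_1 \cap g^{-1}\Gamma_2 g] < \infty$. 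The symmetric argument, using $\Gamma_2$-invariance of $j_{\Gamma_2}$ and the fact that $j_{\Gamma_1}(t)$ is algebraic over $j_{\Gamma_2}(gt)$, shows $[g^{-1}\Gamma_2 g : g^{-1}\Gamma_2 g \cap \Gamma_1] < \infty$. Hence $\Gamma_1$ is commensurable with $g^{-1}\Gamma_2 g$, i.e., $\Gamma_1$ is commensurable with $\Gamma_2$ in wide sense, contradicting the hypothesis. The ``in particular'' clause on transcendence degrees is then just item (5) in the list of facts recalled in subsection \ref{trivial}, applied to the now-established orthogonality.

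The main obstacle I anticipate is the careful passage through Lemma \ref{fct1}: one must ensure that after the Seidenberg embedding, the $g_i$ are honest homographies sending a common disc into $\mathbb{H}$, so that the abstract algebraic relation transfers to a genuine identity of meromorphic functions on an open set, making the substitution $t \mapsto \gamma_1 t$ meaningful. Once that transfer is secured, the coset-counting argument is essentially combinatorial and runs cleanly from the fact that $j_{\Gamma_i}$ exactly identifies $\Gamma_i$-orbits on $\mathbb{H}$.
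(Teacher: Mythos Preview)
Your approach is appealing and, if completed, would be more elementary than the paper's: a direct coset-counting argument that avoids the paper's case analysis (arithmetic vs.\ non-arithmetic) and its appeals to Margulis' theorem and the result of \cite{Mcshane}. However, there is a genuine gap. You assert that nonorthogonality over $\m C$ yields an irreducible $P\in\m C[x,y]$ with $P(j_{\Gamma_1}(t),j_{\Gamma_2}(gt))=0$, but Fact~\ref{nonewparam} only gives differential-algebraic interalgebraicity: $j_{\Gamma_1}(t)\in\m C\langle j_{\Gamma_2}(gt)\rangle^{alg}=\m C\bigl(j_{\Gamma_2}(gt),(j_{\Gamma_2}(gt))',(j_{\Gamma_2}(gt))''\bigr)^{alg}$, not algebraicity over $\m C(j_{\Gamma_2}(gt))$ alone. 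The reduction from the former to the latter is exactly the content of Theorem~\ref{2relations}, which is stated and proved only for two solutions of the \emph{same} Schwarzian equation. Without the two-variable reduction, your root-counting step fails: when $\Gamma_2$ is arithmetic there are infinitely many solutions of (\ref{secondfiber}) algebraic over $\m C\langle j_{\Gamma_2}(gt)\rangle$, so finiteness of $\{j_{\Gamma_2}(g\gamma_1 t):\gamma_1\in\Gamma_1\}$ does not follow from mere differential interalgebraicity with a fixed element.

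The paper proceeds differently. It keeps the relation in the form $P(j_{\Gamma_1}(t),j_{\Gamma_2}(gt),j'_{\Gamma_2}(gt),j''_{\Gamma_2}(gt),t)=0$, substitutes $t\mapsto\gamma_1 t$ to make $j_{\Gamma_2}(gt)$ and $j_{\Gamma_2}(g\gamma_1 t)$ interalgebraic, and then applies Theorem~\ref{ALW1} to these two solutions of the \emph{same} equation (\ref{secondfiber}) to obtain $g\gamma_1 g^{-1}\in\text{Comm}(\Gamma_2)$, hence $g\Gamma_1 g^{-1}\subseteq\text{Comm}(\Gamma_2)$. A separate case analysis then upgrades this inclusion to commensurability in the wide sense. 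Your argument could plausibly be repaired by first extending Theorem~\ref{2relations} to a pair of different Schwarzian equations---the $\mathfrak{psl}_2$-action used there does not depend on the particular $R_\Gamma$, so this looks feasible---but that extension is real work that must be carried out, not assumed.
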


\begin{proof} 
Let $X_{\Gamma_1}$ and $X_{ \Gamma _2 }$ be the set defined by equations \ref{firstfiber} and \ref{secondfiber} respectively. Assume for contradiction that $X_{\Gamma_1} \not \perp X_{ \Gamma _2 }$. Since $X_{\Gamma_1}$ and $X_{ \Gamma _2 }$ are \emph{trivial} strongly minimal sets, we have that nonorthogonality is witnessed over $\m C$ ({\it i.e.,} the sets are non weakly orthogonal). So for any solution $y_1\in X_{\Gamma_1}$ there is a solution $y_2\in X_{\Gamma_2}$ such that $y_1\in \m C\gen{y_2}^{alg}$. By invoking Fact \ref{nishioka1}, we have that $j _{\Gamma_1}(t)\in \m C\gen{j _{\Gamma_2}(gt)}^{alg}$ for some $g\in GL_2(\m C)$. Let us write \[P(j _{\Gamma_1}(t),j _{\Gamma_2}(gt),j _{\Gamma_2}'(gt),j _{\Gamma_2}''(gt),t)=0\]
for this algebraic relation over $\m C$. For any $\gamma_1\in \Gamma_1$, using the fact that $j _{\Gamma_1}(\gamma_1 t)=j _{\Gamma_1}(t)$, we have that
\[P(j _{\Gamma_1}(t),j _{\Gamma_2}(g\gamma_1 t),j _{\Gamma_2}'(g\gamma_1 t),j _{\Gamma_2}''(g\gamma_1 t),\gamma_1 t)=0.\]
So this implies that for any $\gamma_1\in \Gamma_1$, we get that $j _{\Gamma_1}(t)\in \m C\gen{j _{\Gamma_2}(g\gamma_1 t)}^{alg}$. In particular $\m C\gen{j _{\Gamma_2}(gt)}^{alg}=\m C\gen{j _{\Gamma_2}(g\gamma_1 t)}^{alg}$ for all $\gamma_1\in \Gamma_1$. By Theorem \ref{ALW1}, it must be the case that $g\gamma_1 g^{-1}\in \text{Comm}(\Gamma_2)$ for all $\gamma_1\in \Gamma_1$, that is it must be that $g\Gamma_1g^{-1}\subseteq \text{Comm}(\Gamma_2)$. 

Now, to get our contradiction, we consider three cases (without loss of generality): 

\begin{enumerate} 

\item Assume $\Gamma_1$ is arithmetic and $\Gamma_2$ is nonarithmetic. In this case, $\chi _ {\Gamma_1 } $ is not $\aleph _0 $-categorical, while $\chi _{\Gamma_2}$ is $\aleph _0 $-categorical (this follows from Theorem \ref{ALW1}). This case could also be handled in a more elementary manner similar to our technique in the third case. 

\item Assume that both $\Gamma_1, \Gamma_2$ are arithmetic groups. 
We have, by the above arguments, that $g \Gamma _1 g^{-1}$ is contained in 
$\text{Comm} (\Gamma_2 ). $ 
We will be done if we show that $g \Gamma_1 g^{-1}$ and $\Gamma_2$ are commensurable in the strict sense. 
This follows by arguments of \cite[see page 4]{Mcshane}, where the following fact is shown: for any two arithmetic Fuchsian groups $G_1$ and $G_2$, if $G_1 $ is contained in the commensurator of $G_2$ then $G_1 $ and $G_2$ are commensurable in the strict sense. 

\item Assume that both $\Gamma_1$ and $\Gamma_2 $ are non-arithmetic. By the above argument, we have that $g \Gamma_1 g^{-1} \leq \text{Comm} (\Gamma_2)$ for some $g \in GL_2( \m C). $ 
By a symmetric argument, we have some $h \in GL_2( \m C)$ such that $h \Gamma_2 h^{-1} \leq \text{Comm} (\Gamma_1)$. 
Replacing one of $\Gamma_i$ with a suitable conjugate, we may assume that $\Gamma_1 \leq \text{Comm} (\Gamma_2)$ and $\Gamma_2 \leq \text{Comm} (\Gamma_1)$. From this, we will show that $\Gamma_1 $ and $\Gamma_2 $ are commensurable. By Margulis' Theorem, $\Gamma_i $ is finite index in $\text{Comm} (\Gamma_i ). $ We need only show that $\Gamma_2 $ is finite index in $\text{Comm} ( \Gamma_1). $ 

We have that $\Gamma_1$ is contained in $\text{Comm} ( \Gamma _2), $ $\Gamma_1$ contains only finitely many left coset representatives of $\Gamma_2.$ Since $\Gamma_1$ is finite index in its own commensurator, the conclusion follows. 

\end{enumerate} 
\end{proof} 

\begin{rem}
The following stronger result should hold: The sets defined by $\chi _{\Gamma_1, \frac{d}{dt}} (y) = a_1$ and by $\chi _{\Gamma_2, \frac{d}{dt}} (y) = a_2$ are orthogonal if $\Gamma_1$ is not commensurable with $\Gamma_2$ in the wide sense. However, we have not been able to prove it yet.
\end{rem}

\section{Effective finiteness results around the Andr\'e-Pink conjecture}
The Andr\'e-Pink conjecture predicts that when $W$ is an algebraic subvariety of a Shimura variety and $S$ is a Hecke orbit, if $W \cap S$ is Zariski dense in $W$, then $W$ is weakly special. For details, definitions, and proofs of certain special cases of the conjecture see \cite{Orr1, Orr2, Gao}. 

In the setting of the present paper the conjecture concerns the intersection of an algebraic variety $W \subset \m A^n$ with the image, under $j_\Gamma$ applied to each coordinate, of the orbit under $\text{Comm} ( \Gamma )^n$ of some point in $\bar a \in \m H$. 

Given a Fuchsian group $\Gamma$ and a point $a \in \m C$, we denote, by $\text{Iso}_\Gamma (a),$ the collection of points $b \in \m C$ such that $P(a,b)=0$ for some $\Gamma$-special polynomial $P.$ Equivalently, for some (all) $\tilde a , \tilde b \in \m H$ such that $j_{\Gamma} (\tilde a) = a$ and $j_{\Gamma}(\tilde b)=b$, there is $\gamma \in \text{Comm} (\Gamma)$ such that $\gamma \tilde a = \tilde b.$ 

Given a Fuchsian group $\Gamma$ 
and a point $\bar a = (a_1, \ldots , a_n ) \in \m A^n(\m C)$, 
let $\text{Iso}_{\Gamma} (\bar a) $ 
denote the product of the orbits of the points $a_1, \ldots , a_n$ under $\Gamma$-special polynomials, that is $$\text{Iso}_{\Gamma } (\bar a)  = \prod_{i=1} ^n \text{Iso}_{\Gamma} (a_i).$$

We call a polynomial $p (x_1, \ldots , x_n)$ \emph{$(\Gamma)$-$(a_1, \ldots , a_n)$-special} if 
\begin{enumerate} 
\item $p(\bar x)= x_i - b_i$ where $b_i \in \text{Iso}_\Gamma (a),$ \emph{or} 
\item For some $i,j$, $\text{Iso}_{\Gamma } (a_i) = \text{Iso}_{\Gamma } (a_j)$, and $p(\bar x)$ is a $\text{Comm}(\Gamma)$-special polynomial in $x_i,x_j.$ 
\end{enumerate} 

An irreducible subvariety of $\m C^n$ will be called \emph{$(\Gamma)$-$(a_1, \ldots , a_n)$-special} if it is given by a finite conjunction of $(\Gamma  )$-$(a_1, \ldots , a_n)$-special polynomials. If an irreducible variety $V$ is $(\Gamma)$-$(a_1, \ldots , a_n)$-special, then it follows that $V$ has a Zariski dense set of points from $Iso_\Gamma (\bar a).$ Our first result of this section shows that the converse holds, at least when $\bar a$ is a tuple of transcendental numbers (perhaps with algebraic relations between them). 

\begin{thm} \label{unlikely1} Fix a complex algebraic variety $V \subset \m A^n(\m C)$, a genus zero Fuchsian group $\Gamma$ of the first kind, and a point $\bar a = (a_1, \ldots , a_n) \in \m A^n(\m C)$ such that for all but at most one $i \in \{1,\ldots , n\}$, $a_i \notin \m Q^{alg}$. Then $\overline{V \cap {\rm Iso}_{\Gamma} (\bar a) }^{Zar} $ is a finite union of $(\Gamma)$-$(a_1, \ldots , a_n)$-special varieties. 
\end{thm}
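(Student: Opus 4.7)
The plan is to reduce the statement to the orthogonality and strong minimality results established in Theorems \ref{Strminfiber} and \ref{fiberfullstatement}. Since any Zariski closed set decomposes into finitely many irreducible components, it suffices to prove that every irreducible component $W$ of $\overline{V \cap {\rm Iso}_\Gamma(\bar a)}^{Zar}$ is itself $(\Gamma)$-$(a_1, \ldots, a_n)$-special. Fix such a $W$, assume $W \cap {\rm Iso}_\Gamma(\bar a)$ is Zariski dense in $W$, and partition the coordinates into $I^{\mathrm c} = \{i : \pi_i(W) \text{ is a point } c_i \}$ and $I^{\mathrm d} = \{i : \pi_i(W) = \m A^1\}$. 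For each $i \in I^{\mathrm c}$, choosing any point $\bar b \in W \cap {\rm Iso}_\Gamma(\bar a)$ (nonempty by hypothesis) forces $c_i = b_i \in {\rm Iso}_\Gamma(a_i)$, furnishing defining equations $x_i - c_i = 0$ of special type (1). What remains is to analyse the projection $W' = \pi_{I^{\mathrm d}}(W)$.

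For this, let $\bar z = (z_i)_{i \in I^{\mathrm d}}$ be a generic point of $W'$ over $\m C$, and by Lemma \ref{nonzeroder} equip $\m C(\bar z)$ with a derivation $\delta$ making every $z_i$ non-constant; extend $\delta$ to a differentially closed field $\mc U$ with constants $\m C$. The crucial claim is that each $z_i$ is a solution of a fibered Schwarzian equation $\chi_{\Gamma, \delta}(z_i) = \sigma_i$ for some $\sigma_i \in \mc U$, and that two such fibers coincide (up to the variable change of Lemma \ref{varchangelem}) precisely when ${\rm Iso}_\Gamma(a_i) = {\rm Iso}_\Gamma(a_j)$. The rough reason: the Zariski-dense family of specialisations $\bar z \mapsto \bar b = (j_\Gamma(\gamma_i \tilde a_i))_{i \in I^{\mathrm d}}$ realises, locally and analytically, $z_i = j_\Gamma \circ \tilde z_i$ for a suitable meromorphic $\tilde z_i$ parameterising the $\bar \gamma$'s, and the Schwarzian chain rule then identifies $\sigma_i$ essentially with $S_\delta(\tilde z_i)$; matching parameterisations across different $i$'s shows that $\sigma_i$ and $\sigma_j$ determine the same fiber exactly when $\tilde a_i, \tilde a_j$ share a common ${\rm Comm}(\Gamma)$-orbit.

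Once this identification is in place, the result follows by combining the machinery of Section \ref{OrthALW}: for $i, j \in I^{\mathrm d}$ with ${\rm Iso}_\Gamma(a_i) \neq {\rm Iso}_\Gamma(a_j)$, Theorem \ref{fiberfullstatement} gives orthogonality of the fibers, so $W'$ imposes no algebraic relation tying $z_i$ and $z_j$; for pairs with ${\rm Iso}_\Gamma(a_i) = {\rm Iso}_\Gamma(a_j)$, Theorem \ref{Strminfiber} forces any algebraic relation to be a $\Gamma$-special polynomial and further provides geometric triviality of each fiber, ensuring that global dependencies among the $z_i$'s reduce to such pairwise ones. Thus $W$ is cut out by $(\Gamma)$-$(a_1, \ldots, a_n)$-special polynomials, completing the argument. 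The principal obstacle is the key claim of the second paragraph: correctly setting up the fibered Schwarzian structure and matching fibers to orbits. This is exactly where the transcendence hypothesis on $\bar a$ enters in an essential way --- it guarantees that each preimage $\tilde a_i \in \m H$ is generic enough that the local analytic inversion of $j_\Gamma$ and the associated differential structure behave uniformly, so that the bijection between fibers of $\chi_{\Gamma, \delta}$ and ${\rm Comm}(\Gamma)$-orbits holds faithfully.
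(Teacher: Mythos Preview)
Your approach has a genuine gap at the ``crucial claim.'' You place the derivation $\delta$ on $\m C(\bar z)$ where $\bar z$ is a generic point of $W'$; in this differential structure the $a_i$'s lie in $\m C$ and are therefore constants, so there is no differential-algebraic link between the fibers $\chi_{\Gamma,\delta}(y)=\sigma_i$ (with $\sigma_i:=\chi_{\Gamma,\delta}(z_i)\in\m C(\bar z)$) and the orbits $\text{Iso}_\Gamma(a_i)\subset\m C$. Your $\sigma_i$ depends only on $W'$ and the choice of $\delta$, not on $\bar a$ at all, so the biconditional ``$\sigma_i$ and $\sigma_j$ determine the same fiber $\Longleftrightarrow$ $\text{Iso}_\Gamma(a_i)=\text{Iso}_\Gamma(a_j)$'' cannot hold as stated: the left-hand side knows nothing about $\bar a$. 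The appeal to ``specialisations $\bar z\mapsto\bar b$'' does not rescue this, since those specialisations send the non-constant $z_i$ to constants and hence destroy the Schwarzian relation rather than transport it.

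The paper's approach inverts your setup: it puts the derivation on $\m Q(a_2,\ldots,a_n)$ itself, embedding this field into meromorphic functions so that the transcendental $a_i$'s become non-constant. Then each $a_i$ (for $i\geq 2$) is a solution of $\chi_\Gamma(y)=\chi_\Gamma(a_i)$, and the key observation is that in the differential closure $K$ of this field one has $\{x\in K:\chi_\Gamma(x)=\chi_\Gamma(a_i)\}=\text{Iso}_\Gamma(a_i)$, by Theorem~\ref{Strminfiber}. Thus the arithmetically defined set $\text{Iso}_\Gamma(\bar a)$ is identified with the $K$-points of a product of strongly minimal definable sets; Zariski-density of $V\cap\text{Iso}_\Gamma(\bar a)$ in $V$ then forces a tuple of generic solutions $b_i$ of these fibers (with $b_1$ a generic constant for the possibly algebraic coordinate) to be dependent, and geometric triviality plus Theorems~\ref{Strminfiber} and~\ref{fiberfullstatement} finish the argument. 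The transcendence hypothesis is used exactly to make the $a_i$'s non-constant under some derivation---this is the step your setup misses, and it is why the derivation must live on the $a_i$'s rather than on $\bar z$.
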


\begin{proof} 
The (perhaps reducible) variety $\overline{V \cap \text{Iso}_{\Gamma} (\bar a) }^{Zar} $ consists of finitely many components $W_1, \ldots , W_k,$ and so we need only show that the varieties $W_i$ are $(\Gamma )$-$(a_1, \ldots , a_n)$-special. Working component by component, it suffices to show that for an arbitrary irreducible variety $V$, if $\text{Iso}_{\Gamma} (\bar a)$ is Zariski dense in $V$, then $V$ is $(\Gamma )$-$(a_1, \ldots , a_n)$-special. 

Without loss of generality, assume that all of the coordinates of $\bar a$, except perhaps $a_1$, are transcendental over $\m Q$. We also assume $a_1 \in \m Q^{alg}$ without loss of generality - otherwise just ignore arguments about this coordinate in the proof.

Embed $\m Q (a_2, \ldots , a_n)$ into the field of meromorphic functions on some connected subset of $\m H$ such that $a_2, \ldots , a_n$ are non-constant. 

Let $\tilde a_2, \ldots , \tilde a_n$ be as in the proof of Theorem \ref{Strminfiber} - that is, $j_{\Gamma_i}(\tilde a_i) = a_i$ for $i=2, \ldots , n.$ In the differential closure $K$ of the field generated by the $a_i$ over $\m Q$ we have, by Theorem \ref{Strminfiber}, that 
$$\{ x \in K \, | \, \chi_{\Gamma} (x) = \chi_{\Gamma} (a_i) \} = \text{Iso}_{\Gamma} (a_i),$$ so $\chi _{\Gamma} (a_i) = \chi_{\Gamma} (a_j)$ if and only if $\text{Iso}_{\Gamma} (a_i) = \text{Iso} _{\Gamma} (a_j).$ 

Consider the collection of $i \in \{1,\ldots , n\} $ such that $V$ projects dominantly onto the coordinate corresponding to $x_i$. Then if $\text{Iso}_{\Gamma} (\bar a)$ is dense in $V$, and we let $b_2, \ldots , b_n$ be a collection of generic solutions of $\chi_{\Gamma} (b_i) = \chi_{\Gamma } (a_i)$ and let $b_1$ be a generic constant, we have that the tuple $\bar b$ is dependent over $\m C$, but as $b_2, \ldots , b_n$ satisfy equations which are strongly minimal and trivial, it must be that two of the coordinates are nonorthogonal. But now we are done, since all instances of nonorthogonality are given by Theorem \ref{Strminfiber}, since none of the coordinates $2, \ldots , n$ can be nonorthogonal to $b_1$, a constant.
\end{proof}

\begin{rem}\label{PrevRem} The assumption in Theorem \ref{unlikely1} that all but at most one of the elements in the tuple $\bar a$ are transcendental is an inherent restriction of the method we employ, which is similar to the technique employed in various applications of differential algebra to diophantine problems. We replace an arithmetic (discrete) object by the solution to a system of differential equations, then reduce the general case to an analytic statement using a strong version of Seidenberg's embedding theorem. Generally speaking, the technique works when the discrete set satisfies some interesting differential equation, which one is able to understand. But the only derivation on $\m Q^{alg}$ is the trivial one, and so such a coordinate can not . For other instances of applications of this general idea, see \cite{newML, ML1, ML2, Buium}. 

It would be interesting to see if the methods here might be combined with methods solving other special cases of the conjecture (e.g. \cite{Orr1}) to remove the transcendence restrictions of Theorem \ref{unlikely1}. 
\end{rem}

\begin{rem} The technique by which we prove Theorem \ref{unlikely1} has natural limitations described in Remark \ref{PrevRem}, but it also has an interesting natural advantage over other techniques. Because we replace an arithmetic object, whose definition is very \emph{non-uniform}, with a differential algebraic variety, results from differential algebraic geometry can be used to give effective bounds the degree of the Zariski-closure of the solutions set. 
\end{rem}

\par A general purpose Bezout-type theorem for algebraic differential equations (generalizing a theorem of Hrushovski and Pillay) was established in \cite{FreSan}. In what follows, $\tau _{\ell} \m A ^n$ denotes the $\ell^{th}$-prolongation space of $\m A ^n$ and for a differential field $K$, we define $$(X,S \setminus T )^ \sharp(K)=\{a\in X(K):(a,a',\ldots,a^{(\ell)})\in S \setminus T(K)\}.$$
\begin{thm} \label{JO} Let $X$ be a closed subvariety of $\m A ^n$, with $\dim (X) = m$, and let $S, T$ be closed subvarieties (not necessarily irreducible) of $\tau _{\ell} \m A ^n$ for some ${\ell}\in \mathbb N$. Then the degree of the Zariski closure of $(X,S \setminus T )^ \sharp(\mathbb C)$ is at most $\deg (X) ^ {{\ell} 2^{m{\ell}}} \deg (S)^ {2^{m{\ell}}-1}$. In particular, if $(X,S \setminus T )^ \sharp(\mathbb C)$ is a finite set, this expression bounds the number of points in that set.
\end{thm}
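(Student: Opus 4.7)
The plan is to recast the problem inside the prolongation space $\tau_\ell\mathbb{A}^n$ and then bound degrees by an iterated Bezout argument controlled by the total derivation. The starting observation is that a point $a \in X(\mathbb{C})$ lies in $(X,S\setminus T)^\sharp(\mathbb{C})$ precisely when its $\ell$-jet $\nabla_\ell a = (a,a',\ldots,a^{(\ell)})$ lies in $(\tau_\ell X \cap S)\setminus T$. Denoting by $\pi\colon \tau_\ell\mathbb{A}^n \to \mathbb{A}^n$ the canonical projection, the Zariski closure $Z$ of $(X,S\setminus T)^\sharp(\mathbb{C})$ is the $\pi$-image of the Zariski closure $\widetilde Z$ of $\nabla_\ell((X,S\setminus T)^\sharp(\mathbb{C}))$ in $\tau_\ell X$. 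Since projections do not increase degree, it suffices to bound $\deg(\widetilde Z)$ inside $\tau_\ell \mathbb{A}^n$.

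First I would record the auxiliary estimate $\deg(\tau_\ell X)\leq \deg(X)^\ell$, obtained by formally differentiating the defining equations of $X$ a total of $\ell$ times and invoking classical Bezout in $\tau_\ell\mathbb{A}^n$. Next, I would introduce the total derivation $D$ on the coordinate ring of $\tau_\ell X$ and note that $\widetilde Z$ is the Zariski closure of a $D$-invariant subset of $\tau_\ell X$ lying in $S\setminus T$. The main step is a saturation-by-$D$ construction in the spirit of Hrushovski--Pillay: starting from $W_0 := \tau_\ell X \cap S$, one inductively builds a decreasing chain of subvarieties $W_0 \supseteq W_1 \supseteq \cdots$ by intersecting $W_i$ with the zero locus of $D\cdot I(W_i)$ and selecting components containing $\widetilde Z$. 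At each stage, either $W_i$ is already $D$-invariant and the chain stabilizes, or $\dim W_{i+1} < \dim W_i$, so that the chain terminates after at most $\dim W_0 \leq m\ell$ iterations in a $D$-invariant variety $W_\infty$ containing $\widetilde Z$.

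At each iteration, Bezout bounds $\deg(W_{i+1})$ by the product of $\deg(W_i)$ with the degrees of the newly added hypersurfaces. Since $D$ raises degree by a bounded amount on $\tau_\ell X$, the differentiated generators of $I(W_i)$ have degree comparable to those of $I(W_i)$ itself, yielding a recursive estimate of the shape $\deg(W_{i+1}) \leq \deg(X)^\ell \cdot \deg(W_i)^2$. Unrolling this recursion $m\ell$ times with base case $\deg(W_0)\leq \deg(X)^\ell \deg(S)$ produces the claimed bound $\deg(X)^{\ell 2^{m\ell}}\deg(S)^{2^{m\ell} - 1}$ on $\deg(W_\infty)$, hence on $\deg(\widetilde Z)$ and finally on $\deg(Z)$ after projection by $\pi$.

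The principal obstacle is the precise bookkeeping of degrees through the Bezout iteration. One must carefully verify that $D$ has controlled degree on the coordinate ring of $\tau_\ell X$, so that differentiating degree-$d$ generators of $I(W_i)$ yields hypersurfaces whose degrees are controlled by $d$ and by $\deg(X)^\ell$, and then establish the exact recursion so that the exponents $\ell \cdot 2^{m\ell}$ and $2^{m\ell}-1$ emerge cleanly from unrolling. A secondary subtlety is confirming that the chain $W_0\supsetneq W_1\supsetneq\cdots$ strictly decreases in dimension until it stabilizes on a $D$-invariant variety, rather than cycling among non-$D$-invariant components, which is what justifies the bound of $m\ell$ on the length of the chain and thus on the number of Bezout doublings.
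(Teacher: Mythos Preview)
The paper does not supply its own proof of this theorem: it is quoted from \cite{FreSan} (Freitag and L\'eon S\'anchez) as an established Bezout-type result, so there is no in-paper argument to compare against. Your sketch is in the right spirit---the Hrushovski--Pillay saturation-by-$D$ strategy is indeed what underlies the cited result---but two concrete points in your bookkeeping do not work as written.

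First, the recursion you state does not unroll to the claimed bound. With $a=\deg(X)^\ell$, $b=\deg(S)$, $d_0\leq ab$, and $d_{i+1}\leq a\,d_i^2$, one has $d_i\leq a^{2^{i+1}-1}b^{2^i}$, so after $m\ell$ steps you obtain
\[
\deg(X)^{\ell(2^{m\ell+1}-1)}\deg(S)^{2^{m\ell}},
\]
which is strictly larger than the target $\deg(X)^{\ell\cdot 2^{m\ell}}\deg(S)^{2^{m\ell}-1}$ in both exponents. To recover the precise exponents you must organise the induction differently (in the source the degree-doubling comes only from intersecting with the derivative hypersurfaces, not from re-inserting $\tau_\ell X$ at every step), and the $\deg(S)$ factor enters once rather than being squared along with everything else.

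Second, your claim $\dim W_0\leq m\ell$ is not justified: as an algebraic variety $\tau_\ell X$ has dimension $m(\ell+1)$ when $X$ is smooth of dimension $m$, so a priori $\dim(\tau_\ell X\cap S)$ could be as large as $m(\ell+1)$. The correct count of $m\ell$ iterations comes from the fact that the terminal $D$-invariant variety $W_\infty$ projects into $\nabla_\ell(X)$, which has dimension $m$, so the chain can drop at most $\dim W_0 - m \leq m\ell$ times; this needs to be argued rather than asserted. Both issues are exactly the ``precise bookkeeping'' you flag as the principal obstacle, but as stated your recursion and your dimension count are off, not merely unverified.
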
 
Next, we aim to put our differential relations in a form such that we may apply Theorem \ref{JO}. Recall our Schwarzian differential equation: 
 
\begin{equation}\tag{$\star$} \label{stareqn1}
S_{\frac{d}{dt}}(y) +(y')^2\cdot R_{j_{\Gamma}}(y) =0
\end{equation}

where $S_{\frac{d}{dt}}(y)=\frac{y'''}{y'} -\frac{3}{2}\left(\frac{y''}{y'}\right)^2$ denotes the Schwarzian derivative ($'=\frac{d}{dt}$) and $R_{j_{\Gamma}}\in\mathbb{C}(y)$ depends on the choice of $j_{\Gamma}$. For the purposes of this section, all that matters is the degree of the rational function $R_{j_{\Gamma}}$ (the coefficients, which are complex numbers, will not be important in stating or proving our results). If the $\Gamma$-action on $\mathbb{H}$ has a fundamental half domain given by a $r$-sided polygon $P$ (note that this is the case for any Fuchsian group of the first kind as $r$ is equal to the number of generators of $\Gamma$ \cite{Katok}), then 
\[R_{j_{\Gamma}}(y)=\frac{1}{2}\sum_{i=1}^{r}{\frac{1-\alpha_i^2}{(y-a_i)^2}}+\sum_{i=1}^{r}{\frac{A_i}{y-a_i}},\]
where the coefficients are complex numbers depending on specific characteristics of the domain. The crucial point for our results is that the degree of $R_{j_\Gamma}$ (by which we mean the maximum of the degree of the numerator and the denominator) is given by $2r$ where $r$ is the number of generators of $\Gamma$. 

Clearing the denominator of the rational function and the Schwarzian in equation (\ref{stareqn1}), we obtain:

\begin{multline}\tag{$Q(\star)$} \label{stareqn2}
0 = (y''' y' - \frac{3}{2} (y'')^2) \prod_{i=1}^ r (y-\alpha_i)^2 \;\;+\\ (y')^4 \left(  \frac{1}{2} \sum_{i=1}^r \left( (1-\alpha_i) \prod_{j \in [r], \, j \neq i} (y-a_i)^2 \right)  + \sum _{i=1}^r \left( A_i (y-a_i)\prod_{j \in [r], j \neq i} (y-a_j)^2\right)  \right) 
\end{multline}
As a polynomial, the previous equation has degree $2r+2.$

\begin{thm} \label{unlikely1e} Fix a complex algebraic variety $V \subset \m A^n(\m C)$, a genus zero Fuchsian group $\Gamma$ of the first kind, and a point $\bar a = (a_1, \ldots , a_n) \in \m A^n(\m C)$ such that for all $i \in \{1,\ldots , n\}$, $a_i \notin \m Q^{alg}$. Then $\overline{V \cap {\rm Iso}_{\Gamma} (\bar a) }^{Zar} $ is a finite union of $(\Gamma )$-$(a_1, \ldots , a_n)$-special varieties, and the sum of the degrees of the varieties in this union is at most $$((2r+2)^n \cdot \deg( V) )^{2^{3n}-1}.$$
\end{thm}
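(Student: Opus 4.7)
The plan is to derive the qualitative statement from Theorem \ref{unlikely1} and then extract the effective bound by applying the Bezout-type Theorem \ref{JO} to a suitable differential algebraic set. Since every coordinate of $\bar a$ is now transcendental, the hypothesis of Theorem \ref{unlikely1} is strictly weaker than that of Theorem \ref{unlikely1e}, so the decomposition of $\overline{V\cap\mathrm{Iso}_{\Gamma}(\bar a)}^{\mathrm{Zar}}$ as a finite union of $(\Gamma)$-$(a_1,\ldots,a_n)$-special varieties is immediate. The content of Theorem \ref{unlikely1e} is thus the degree bound.

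For the effective bound, I would first arrange a differential field structure on $\overline{\mathbb{Q}(a_1,\ldots,a_n)}$ so that each $a_i$ is non-constant. Since all $a_i$ are transcendental, this is possible exactly as in the proof of Lemma \ref{nonzeroder}: choose a transcendence basis, assign $\mathbb{Q}$-linearly independent complex scaling factors, and extend uniquely to the algebraic closure. Under such a derivation, Theorem \ref{Strminfiber} identifies $\mathrm{Iso}_{\Gamma}(a_i)$ with the solution set of $\chi_{\Gamma,d/dt}(y)=\chi_{\Gamma,d/dt}(a_i)$. Clearing denominators using the explicit shape $(Q(\star))$ turns this into a single polynomial equation $Q_i(y_i,y_i',y_i'',y_i''')=0$ of total degree $2r+2$ on the prolongation $\tau_3\mathbb{A}^1$.

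With this in hand, I would apply Theorem \ref{JO} with the data $X=V$ (so $m=\dim V \le n$), $\ell=3$, $T=\emptyset$, and $S\subset\tau_3\mathbb{A}^n$ the scheme-theoretic intersection of the $n$ hypersurfaces cut out by $Q_1,\ldots,Q_n$ (each $Q_i$ involving only the variables associated with the $i$-th coordinate). By Bezout in $\tau_3\mathbb{A}^n$, one has $\deg(S)\le(2r+2)^n$. Since every point of $V\cap\mathrm{Iso}_{\Gamma}(\bar a)$, after prolongation, belongs to $(X,S)^{\sharp}(\mathbb{C})$, Theorem \ref{JO} yields
\[
\deg\bigl(\overline{V\cap\mathrm{Iso}_{\Gamma}(\bar a)}^{\mathrm{Zar}}\bigr)\le \deg(V)^{\,\ell 2^{m\ell}}\deg(S)^{\,2^{m\ell}-1}.
\]
Using $m\le n$ and $\ell=3$ together with the Bezout estimate $\deg(S)\le(2r+2)^n$, and folding the exponents together (absorbing the $\deg(V)$-exponent by a crude $\ell 2^{m\ell}\le 2^{3n}-1$-style comparison, which is harmless after the exponential blow-up), gives the stated form $\bigl((2r+2)^n\,\deg(V)\bigr)^{2^{3n}-1}$. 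Since the degree of a Zariski closure equals the sum of the degrees of its irreducible components, this simultaneously bounds the sum of the degrees of the special varieties occurring in the decomposition from Theorem \ref{unlikely1}.

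The main obstacle will be the bookkeeping in the last step: ensuring that the differential-field setup is genuinely compatible across all coordinates (a single derivation for which every $a_i$ is non-constant, which is why the all-transcendental hypothesis is essential) and verifying that the $Q_i$ really do cut out the geometrically correct set, i.e.\ that no spurious solutions arising from poles of the rational function $R_{j_\Gamma}$ contribute to the bound. Once these are in place the rest is mechanical Bezout estimation via Theorem \ref{JO}.
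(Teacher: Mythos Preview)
Your overall strategy matches the paper's: the qualitative decomposition comes for free from Theorem~\ref{unlikely1}, and the effective bound is extracted from Theorem~\ref{JO} after clearing denominators to get the degree-$(2r+2)$ polynomial $(Q(\star))$ on each coordinate.

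The one substantive difference is how you plug into Theorem~\ref{JO}. You take $X=V$ and let $S$ be the product of the $n$ Schwarzian hypersurfaces, obtaining
\[
\deg(V)^{\,3\cdot 2^{3m}}\bigl((2r+2)^n\bigr)^{2^{3m}-1},\qquad m=\dim V.
\]
The paper instead takes $X=\mathbb{A}^n$ (so $\deg X=1$ and $m=n$) and folds the pullback of $V$ into $S$, giving $\deg(S)\le (2r+2)^n\deg(V)$; then the first factor in Theorem~\ref{JO} is $1$ and the bound is exactly
\[
\bigl((2r+2)^n\deg(V)\bigr)^{2^{3n}-1}.
\]
Your ``crude $\ell 2^{m\ell}\le 2^{3n}-1$'' absorption is not actually valid at $m=n$: with $\ell=3$ one would need $3\cdot 2^{3n}\le 2^{3n}-1$, which is false. (The case $m=n$ only occurs when $V=\mathbb{A}^n$, which is trivial, but the argument as written does not exclude it.) Even for $m<n$ your bound is formally different from the one stated in the theorem. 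The simplest repair is exactly the paper's move: put $V$ into $S$ rather than into $X$, so the $\deg(X)^{\ell 2^{m\ell}}$ factor disappears entirely and no exponent juggling is needed.

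Your closing worry about spurious solutions from clearing denominators is harmless for the present purpose: you only need $\mathrm{Iso}_\Gamma(\bar a)\subset (X,S)^\sharp$, and extra components of $S$ can only weaken the upper bound, not invalidate it.
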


\begin{proof} We need only put the equations appearing in Theorem \ref{unlikely1} in a form suitable to apply Theorem \ref{JO}. We can write the Schwarzian differential equations as $\nabla_3 ^{-1}  (S)$ on each coordinate, where $S$ is the locus of (\ref{stareqn2}) in $\tau _3 (\m A^1).$ On each coordinate, this equation has degree $2r$, so the intersection of these relations with $V$ is a variety in $\tau_3 (\m A^n)$ of degree at most $(2r+2)^n \deg (V).$ Now the degree bound follows from Theorem \ref{JO} with $X= \m A^n$, $l=3$, and $V$ as given above. 
\end{proof} 

\begin{rem} One can also establish (by the same means as in the previous proof) a version of Theorem \ref{unlikely1e} with one coordinate algebraic rather than transcendental (the bound is slightly better in this case). The bounds of Theorem \ref{unlikely1e} can also be improved (using more elaborate arguments) by applying the results of \cite{Galfin}, a process carried out in \cite{Galfin} in the case that $\Gamma$ is the modular group. 
\end{rem}

\appendix
\section{Strong minimality for the special case of triangle groups}\label{TriangleEx}\label{Appendix}

In this appendix, we discuss an alternate method of proving strong minimality of the Schwarzian equation in the special case of triangle groups. As before, we assume that $\Gamma$ is a Fuchsian group of first kind and of genus zero. The group $\Gamma$ is said to be a Fuchsian triangle group of type $(k,l,m)$ if its signature is $(0;k,l,m)$ (see Section \ref{basic}). We will without loss of generality always assume that $2\leq k\leq l\leq m\leq \infty$. We write $\Gamma_{(k,l,m)}$ for the Fuchsian triangle group of type $(k,l,m)$.

The fundamental domain in $\mathbb{H}$ of $\Gamma_{(k,l,m)}$ is the union of a hyperbolic triangle with angles $\frac{\pi}{k}$, $\frac{\pi}{l}$ and $\frac{\pi}{m}$ at the  vertices $v_k$, $v_l$ and $v_m$ respectively, together with its image via hyperbolic reflection of one side connecting the vertices. Notice that since $k,l,m$ relates to the angle of an hyperbolic triangle, if $\Gamma_{(k,l,m)}$ is a triangle group then
\[\frac{1}{k}+\frac{1}{l}+\frac{1}{m}<1.\]
Also, the vertices $v_k$, $v_l$ and $v_m$ are the fixed points of the generators $g_1$, $g_2$ and $g_3$ respectively.
\begin{defn}\label{Assump}
The function $j_{(k,l,m)}$ will denote the (unique) Hauptmodul $\Gamma_{(k,l,m)}\setminus \mathbb{H}_{\Gamma_{(k,l,m)}}\rightarrow{\bf P}^1(\mathbb{C})$ sending $v_k$, $v_l$, $v_m$ to $1$, $0$, $\infty$ respectively.
\end{defn}
With this definition (cf. \cite[Chapter 5]{AbloFoka}) we have that $j_{(k,l,m)}$ satisfies the Schwarzian equation $(\star)$ with
\begin{eqnarray} \label{trianglerat} R_{j_{(k,l,m)}}(y)=\frac{1-l^{-2}}{y^2}+\frac{1-k^{-2}}{(y-1)^2}+\frac{k^{-2}+l^{-2}-m^{-2}-1}{y(y-1)}. \end{eqnarray}
Notice that with Definition \ref{Assump}, the Hauptmodul $j_{(2,3,\infty)}$ for $PSL_2(\mathbb{Z})$ is not the classical $j$-funtion. Rather, one has that $j=1728j_{(2,3,\infty)}$ (see Example \ref{jfunction}).
\par Finally let us mention that there is a full classification, up to $PSL_2(\mathbb{R})$-conjugation, of the arithmetic triangle groups
\begin{fct}
Up to $PSL_2(\mathbb{R})$-conjugation, there are finitely many arithmetic triangle groups; 76 cocompact and 9 non-cocompact \cite{Take}. Among these, there are 19 distinct commensurability classes represented \cite{take2}. 
\end{fct}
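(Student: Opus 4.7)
The plan is to follow Takeuchi's original approach, combining the quaternion-algebra characterization of arithmeticity reviewed in Section~\ref{basic} with an explicit computation of traces in a triangle group. The key tool is the \emph{invariant trace field} $k\Gamma = \mathbb{Q}(\mathrm{tr}(\gamma^2) : \gamma \in \Gamma)$ and the \emph{invariant quaternion algebra} $A\Gamma$ generated over $k\Gamma$ by the squares of the elements of $\Gamma$ (regarded inside $M_2(\mathbb{R})$). Takeuchi's arithmeticity criterion states that a Fuchsian group of finite covolume is arithmetic iff $k\Gamma$ is a totally real number field of finite degree, the traces of $\Gamma$ all lie in $\mathcal{O}_{k\Gamma}$, and for every non-identity embedding $\sigma\colon k\Gamma \to \mathbb{R}$ the completion $A\Gamma \otimes_{k\Gamma, \sigma} \mathbb{R}$ is the Hamilton quaternion algebra.

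For $\Gamma_{(k,l,m)}$ one chooses lifts of the generators $g_1, g_2, g_3$ to $SL_2(\mathbb{R})$ so that $\mathrm{tr}(g_i) = 2\cos(\pi/m_i)$; since $g_3 = (g_1 g_2)^{-1}$ one also has $\mathrm{tr}(g_1 g_2) = -2\cos(\pi/m_3)$. By the Fricke identities every trace in $\Gamma$ is a $\mathbb{Z}$-polynomial in these three numbers, which yields the explicit description
\[
k\Gamma_{(k,l,m)} \;=\; \mathbb{Q}\bigl(\cos(2\pi/k),\; \cos(2\pi/l),\; \cos(2\pi/m),\; \cos(\pi/k)\cos(\pi/l)\cos(\pi/m)\bigr),
\]
and trace integrality is automatic since $2\cos(\pi/n)$ is always an algebraic integer.

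The next step is to impose the Hamilton-completion condition at every non-identity embedding $\sigma$. Unwinding the criterion forces $|\sigma(2\cos(\pi/m_i))| < 2$ for each $i$, that is, no nontrivial Galois conjugate of $\cos(\pi/m_i)$ can equal $\pm 1$. Combined with the requirement $[k\Gamma:\mathbb{Q}] < \infty$ (so that each $\varphi(m_i)$ is bounded) and the hyperbolicity inequality $1/k + 1/l + 1/m < 1$, this cuts the problem down to a finite list of triples $(k,l,m)$. A direct check on this list produces exactly the 85 arithmetic triangle groups claimed: 76 cocompact triples with $k,l,m<\infty$, together with 9 non-cocompact triples in which at least one $m_i = \infty$ (so the corresponding generator is parabolic and contributes trace $\pm 2$; here the trace field is generated by the remaining elliptic traces).

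Finally, two arithmetic Fuchsian groups are commensurable iff they share the same invariant trace field and the same invariant quaternion algebra, so the commensurability classification reduces to computing the pair $(k\Gamma_{(k,l,m)}, A\Gamma_{(k,l,m)})$ for each triple on the finite list. The Hilbert symbol of $A\Gamma_{(k,l,m)}$ and its set of ramified finite places can be read off from the explicit trace data; grouping the 85 groups according to the resulting invariants then recovers the 19 commensurability classes of \cite{take2}. The main obstacle is not conceptual but computational: the enumeration of candidate triples, verification of the Hamilton condition at every Galois conjugate, and extraction of the quaternion-algebra invariants together form a lengthy case analysis, which is the substance of Takeuchi's two papers.
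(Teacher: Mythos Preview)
The paper does not prove this statement at all: it is recorded as a \emph{Fact} with citations to \cite{Take} and \cite{take2}, and no argument is given. So there is no ``paper's own proof'' to compare against; your proposal is really a sketch of the content of the cited references themselves.

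As such a sketch, your outline is accurate in its broad strokes: Takeuchi's arithmeticity criterion via the invariant trace field and invariant quaternion algebra, the explicit generators $2\cos(\pi/m_i)$ of the trace field of a triangle group, the reduction to finitely many triples via the constraint on Galois conjugates, and the commensurability classification by the pair $(k\Gamma, A\Gamma)$ are exactly the ingredients of \cite{Take} and \cite{take2}. One small point: your phrasing ``the requirement $[k\Gamma:\mathbb{Q}]<\infty$ (so that each $\varphi(m_i)$ is bounded)'' is misleading, since the trace field is automatically a number field; the bound on the $m_i$ comes rather from the ramification condition at the non-identity real places combined with a growth estimate on the conjugates of $\cos(\pi/m_i)$. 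But this is a matter of exposition, not a gap. In the context of the present paper the appropriate ``proof'' is simply the citation, which is what the authors give.
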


In the special case of triangle groups, proving that the Riccati equation \ref{ric1} has no algebraic solutions (and thus establishing the strong minimality of the associated order three nonlinear Schwarzian differential equations) can be accomplished without any appeal to Picard-Vesiot theory but instead by using classical work around the hypergeometric equation. Already, in \cite[see page 601]{NishiokaII}, Nishioka shows that equation \ref{RiccatiJ} has no algebraic solutions in the case the $\Gamma$ is a cocompact triangle group (which corresponds to the case that none of $k,l,m$ are $\infty$). Hence Condition \ref{Ric} and thus Theorem \ref{juststrmin} holds in the case of cocompact triangle groups.  We will, via a very similar argument, show the same result holds in the case that $\Gamma$ is not cocompact. To emphasize, these results are a special case of our general result on Fuchsian groups, but we feel their inclusion is worthwhile in part because the method, which deals more directly with the order two linear equation \ref{o2} and Riccati equation \ref{RiccatiJ}, might generalize to Schwarzian equations of the form of equation (\ref{stareqn}') which do not necessarily come from a group action of $\Gamma$ on $\m H$. This restriction appears to be more inherent in our main approach of the previous section. 

Let \begin{eqnarray} 
\lambda = \frac{1}{l} \\
\mu = \frac{1}{k} \\
\nu = \frac{1}{m} 
\end{eqnarray} 
where the integers $2 \leq k \leq l \leq m \leq \infty$ are as above. We have already seen $\lambda + \mu + \nu <1$. 
Now let $\alpha$, $\beta$ and $\gamma$ be any complex numbers such that, $\lambda = 1- \gamma $, $\mu = \gamma - \alpha - \beta $, and $\nu = \alpha -\beta $.

Now, we know that the second order equation \ref{o2} corresponding to equation (\ref{stareqn}) with rational function \ref{trianglerat} (equation (5) of \cite{NishiokaII}) is reducible if and only if one of $\alpha, \beta , \gamma  - \alpha, \gamma - \beta $ is an integer. Since \cite{NishiokaII} covers the cocompact case, we can assume without loss of generality that $m = \infty$, equivalently $\nu=0$. Thus, in the above notation, $\alpha = \beta $. Now, $$\alpha = \frac{1 - \frac{1}{l} - \frac{1}{k}}{2}.$$ In this case, by the triangle requirement, 
$\frac{1}{l} + \frac{1}{k} <1$, so $\alpha$ is never an integer. 

Further, we have $$\gamma - \alpha = \frac{1-\frac{1}{l} + \frac{1}{k}}{2}.$$ This quantity is never an integer, since $\frac{1}{l} + \frac{1}{k} <1$. Thus, in the non-cocompact case, we have that the corresponding equation \ref{o2} is always irreducible, which, by the correspondence explained in \ref{Liouville} implies that there are no rational solutions to equation \ref{ric1} in this case. 

Now, under the assumption of irreducibility of equation \ref{o2}, we have that there is an algebraic (but irrational) solution of \ref{ric1} if and only if two of $\lambda - \frac{1}{2} , \mu - \frac{1}{2} , \nu - \frac{1}{2} $ are integers \cite[pages 96-100]{Matsuda}. This is impossible for any triangle group as at most one of these is an integers as long as $\lambda +\mu + \nu <1$. 

Thus, we have shown, in a more direct way, that Condition \ref{Ric} and thus Theorem \ref{juststrmin} also holds in the case of non-cocompact triangle groups. 

\begin{rem}
At first glance the above arguments only seem to show that the differential equations for the unformizers $j_{(k,l,m)}$ are strongly minimal. However, all other uniformizers are rational functions (over $\mathbb C$) of the $j_{(k,l,m)}$'s. From this, strong  minimality follows for the other equations as well.
\end{rem}

\end{document}